\newtheorem{theorem}{Theorem}[section]
\newtheorem{lemma}[theorem]{Lemma}
\newtheorem{corollary}[theorem]{Corollary}
\newenvironment{proof}[1][Proof]{\begin{trivlist}
\item[\hskip \labelsep {\bfseries #1}]}{\end{trivlist}}
\newenvironment{example}[1][Example]{\begin{trivlist}
\item[\hskip \labelsep {\bfseries #1}]}{\end{trivlist}}
\newcommand{\qed}{\nobreak \ifvmode \relax \else      \ifdim\lastskip<1.5em \hskip-\lastskip
\hskip1.5em plus0em minus0.5em \fi \nobreak
\vrule height0.75em width0.5em depth0.25em\fi}
 \title{Differences of Augmented Staircase Skew Schur Functions}
 \author{Matthew Morin,}
\begin{document}

\begin{center}

{\huge Differences of Augmented Staircase Skew Schur Functions}

\

{\LARGE Matthew Morin}

University of British Columbia

mjmorin@math.ubc.ca
\

\end{center}

\noindent {Mathematics Subject Classification: 05E05}

\noindent {Keywords: skew Schur functions, Schur-positivity, Littlewood-Richardson coefficients, staircase diagrams}

\

\begin{center}
\textbf{Abstract}
\end{center}

We define a fat staircase to be a Ferrers diagram corresponding to a partition of the form $(n^{\alpha_n}, {n-1}^{\alpha_{n-1}},\ldots, 1^{\alpha_1})$, where $\alpha = (\alpha_1,\ldots,\alpha_n)$ is a composition, or the $180^\circ$ rotation of such a diagram. 
We look at collections of skew diagrams consisting of a fixed fat staircase augmented with all hooks of a given size. 
Among these diagrams we determine precisely which pairs give a Schur-positive difference. 
We extend this classification to collections of fat staircases augmented with hook-complements.

\setlength{\unitlength}{0.1mm}

\newcommand{\lroof}{\mbox{\begin{picture}(3,0)
                                \put(0,0){\line(0,1){10}}

                                \put(0,10){\line(1,0){3}}

                           \end{picture}}}

\newcommand{\rroof}{\mbox{\begin{picture}(3,0)
                                \put(3,0){\line(0,1){10}}

                                \put(3,10){\line(-1,0){3}}

                           \end{picture}}}

\newcommand{\longlroof}{\mbox{\begin{picture}(3,0)
                                \put(0,-3){\line(0,1){10}}

                                \put(0,7){\line(1,0){3}}

                           \end{picture}}}

\newcommand{\longrroof}{\mbox{\begin{picture}(3,0)
                                \put(3,-3){\line(0,1){10}}

                                \put(3,7){\line(-1,0){3}}

                           \end{picture}}}

\newcommand{\hooka}{\mbox{\begin{picture}(0,0)
                                \put(-5,-3){ \framebox(10,10){} }
                                \put(-5,-13){ \framebox(10,10){} }
                                \put(-5,-23){ \framebox(10,10){} }
                                \put(-5,-33){ \framebox(10,10){} }
                                \put(-5,-43){ \framebox(10,10){} }
                                \put(-5,-53){ \framebox(10,10){} }
                            \end{picture}}}

\newcommand{\hookb}{\mbox{\begin{picture}(0,0)
                                \put(-5,-3){ \framebox(10,10){} }
                                \put(5,-3){ \framebox(10,10){} }
                                \put(-5,-13){ \framebox(10,10){} }
                                \put(-5,-23){ \framebox(10,10){} }
                                \put(-5,-33){ \framebox(10,10){} }
                                \put(-5,-43){ \framebox(10,10){} }
                            \end{picture}}}

\newcommand{\hookc}{\mbox{\begin{picture}(0,0)
                                \put(-5,-3){ \framebox(10,10){} }
                                \put(5,-3){ \framebox(10,10){} }
                                \put(15,-3){ \framebox(10,10){} }
                                \put(-5,-13){ \framebox(10,10){} }
                                \put(-5,-23){ \framebox(10,10){} }
                                \put(-5,-33){ \framebox(10,10){} }
                            \end{picture}}}

\newcommand{\hookd}{\mbox{\begin{picture}(0,0)
                                \put(-5,-3){ \framebox(10,10){} }
                                \put(5,-3){ \framebox(10,10){} }
                                \put(15,-3){ \framebox(10,10){} }
                                \put(25,-3){ \framebox(10,10){} }
                                \put(-5,-13){ \framebox(10,10){} }
                                \put(-5,-23){ \framebox(10,10){} }
                            \end{picture}}}

\newcommand{\hooke}{\mbox{\begin{picture}(0,0)
                                \put(-5,-3){ \framebox(10,10){} }
                                \put(5,-3){ \framebox(10,10){} }
                                \put(15,-3){ \framebox(10,10){} }
                                \put(25,-3){ \framebox(10,10){} }
                                \put(35,-3){ \framebox(10,10){} }
                                \put(-5,-13){ \framebox(10,10){} }
                            \end{picture}}}

\newcommand{\hookf}{\mbox{\begin{picture}(0,0)
                                \put(-5,-3){ \framebox(10,10){} }
                                \put(5,-3){ \framebox(10,10){} }
                                \put(15,-3){ \framebox(10,10){} }
                                \put(25,-3){ \framebox(10,10){} }
                                \put(35,-3){ \framebox(10,10){} }
                                \put(45,-3){ \framebox(10,10){} }
                            \end{picture}}}

\section{Introduction}

The Schur functions are perhaps best known as a basis of the ring of symmetric functions. 
As such, the numbers $c_{\mu \nu}^{\lambda}$, commonly known as the \textit{Littlewood-Richardson coefficients}, that arise in the product
\[ s_\mu s_\nu = \sum_{\lambda} c_{\mu \nu}^{\lambda} s_{\lambda}, \]
are of paramount importance to this structure of this ring.

This structure appears in several other areas. 
In the representations of the symmetric group, the %irreducible characters of $S_n$ can be placed in a natural one-to-one correspondence with the Schur functions and this correspondence gives rise to an isomorphism. 
Specht modules can be placed in one-to-one correspondence with the Schur functions and given two Specht modules $S^\mu$ and $S^\nu$ we have
\[ (S^\mu \otimes S^\nu)\uparrow^{S_n} = \bigoplus_{\lambda} c_{\mu \nu}^{\lambda} S^{\lambda}, \]
and, 
in the cohomology ring of the Grassmannian, the Schubert classes are in correspondence to the Schur functions and the cup product of each pair $\sigma_\mu$, $\sigma_\nu$ of Schubert classes satisfies
\[ \sigma_\mu \cup \sigma_\nu = \sum_{\lambda} c_{\mu \nu}^{\lambda} \sigma_\lambda.\]

It is well known that $c_{\mu \nu}^{\lambda} \geq 0.$ 
Thus each product $s_\mu s_\nu$ gives rise to a linear combination of Schur functions with non-negative coefficients. 
Such an expression is said to be \textit{Schur-positive}.
In recent years, there has been significant interest in determining instances of Schur-positivity in expressions of the form
\[ s_{\mu} s_{\nu} - s_{\lambda} s_{\rho} \textrm{ } \textrm{ and } \textrm{ } s_{\lambda / \mu} - s_{\rho / \nu}.\]
A collection of work in this vein includes \cite{{g2},{complementcite},{lpp},{m},{mvw2}}.  
Each of these Schur-positive differences gives a set of inequalities that the corresponding Littlewood-Richardson coefficients must satisfy. 
In \cite{fomin3}, a Schur-positivity result was used to characterize the eigenvalues of a Hermatian matrix. 
Further, any Schur-positive homogeneous symmetric function of degree $n$ can be expressed as a Frobenius image of some representation of $S_n$.

In this paper we shall define certain types of staircase diagrams and answer the question of Schur-positivity of each difference of any pair of hook augmentations of a given staircase and each difference of any pair of hook complement augmentations of a given staircase.

\section{Preliminaries}

A \textit{partition} $\lambda$\label{def:lambda} of a positive integer $n$, written $\lambda \vdash n$, is a sequence of weakly decreasing positive integers $\lambda = (\lambda_1,\lambda_2, \ldots, \lambda_k)$ with $\sum_{i=1}^{k} \lambda_i = n$. 
We shall use $j^r$ to denote the sequence $j,j,\ldots, j$ consisting of $r$ $j$'s.
Under this notation, $\lambda = (k^{r_k}, {k-1}^{r_{k-1}}, \ldots ,1^{r_1})$ \label{parts} denotes the partition which has $r_1$ parts of size one, $r_2$ parts of size two, \ldots,  and $r_k$ parts of size $k$.

We say $\alpha=(\alpha_1,\alpha_2,\ldots,\alpha_k)$ is a \label{comp} \textit{composition} of $n$ if each $\alpha_i$ is a positive integer and $\sum_{i=1}^{k} \alpha_i = n$.
If we relax this condition to allow each $\alpha_i$ to be non-negative, then we call the result a \textit{weak composition}
If $\lambda$ is either a partition or a composition we call each $\lambda_i$ a \textit{part} of $\lambda$, and if $\lambda$ has exactly $k$ parts we say $\lambda$ is of \textit{length} $k$ and write $l(\lambda)=k$.
The \textit{size} of $\lambda$ is given by $| \lambda |=\sum_{i=1}^{k} \lambda_i$. % and say that the \textit{size} of $\lambda$ is $n$. 

Given a partition $\lambda$, we can represent it via the diagram of left-justified rows of boxes whose $i$-th row contains $\lambda_i$ boxes. 
The diagrams of these type are called \textit{Ferrers diagrams}. %, or just \textit{diagrams} for short. 
We shall use the symbol $\lambda$ when refering to both the partition and its Ferrers diagram.

Whenever we find a diagram $\mu$ contained in a diagram $\lambda$ as a subset of boxes, we write $\mu \subseteq \lambda$ and say that $\mu$ is a \textit{subdiagram} of $\lambda$.
In this case we can form the \textit{skew \label{skew} diagram} $\lambda / \mu$ by removing the boxes of $\mu$ from the top-left corner of $\lambda$.

A \textit{hook} is the Ferrers diagram corresponding to a partition $\lambda$ that satisfies $\lambda_i \leq 1$ for all $i>1$. 
Hence a hook has at most one row of length larger than $1$.

Given diagrams $D_1$ and $D_2$, we define their \textit{direct sum} to be the skew diagram $D= D_1 \oplus D_2$ that consists of the subdiagrams $D_1$ and $D_2$ such that the top-right box of $D_1$ is one step left and one step down from the bottom-left box of $D_{2}$. 
Further, given any diagram $D$, the $180^{\circ}$ rotation of a diagram $D$ is denoted by $D^{\circ}$.

\begin{example} Let $D_1 = (2,2,2)/(1,1)$ and $D_2 = (4,4,2)$. Then ${D_1}^\circ$ is the hook given by $(2,1,1)$. We display the direct sum $D_1 \oplus D_2$. 

\setlength{\unitlength}{0.4mm}

\begin{picture}(120,50)(-35,-5)

\put(32,10){$D_1 \oplus D_2$}

\put(100,30){\framebox(10,10)[tl]{ }}
\put(110,30){\framebox(10,10)[tl]{ }}
\put(120,30){\framebox(10,10)[tl]{ }}
\put(130,30){\framebox(10,10)[tl]{ }}

\put(100,20){\framebox(10,10)[tl]{ }}
\put(110,20){\framebox(10,10)[tl]{ }}
\put(120,20){\framebox(10,10)[tl]{ }}
\put(130,20){\framebox(10,10)[tl]{ }}

\put(100,10){\framebox(10,10)[tl]{ }}
\put(110,10){\framebox(10,10)[tl]{ }}

\put(90,0){\framebox(10,10)[tl]{ }}
\put(90,-10){\framebox(10,10)[tl]{ }}
\put(90,-20){\framebox(10,10)[tl]{ }}
\put(80,-20){\framebox(10,10)[tl]{ }}

\end{picture}

\end{example}

\

If $D$ is a diagram, then a \textit{tableau}---plural \textit{tableaux}---$\mathcal{T}$ \textit{of shape $D$} is obtained by filling the boxes of the $D$ with the positive integers. 
It is a \textit{semistandard Young tableau} (SSYT---plural SSYTx) if each row of $\mathcal{T}$ gives a weakly increasing sequence of integers and each column of $\mathcal{T}$ gives a strictly increasing sequence of integers.
The \textit{content} of a tableau $\mathcal{T}$ is the weak composition given by
\[ \nu(\mathcal{T}) = ( \# \textrm{1's in }\mathcal{T}, \# \textrm{2's in }\mathcal{T}, \ldots).\]

Given a skew diagram $D$, the \textit{skew Schur function corresponding to $D$} is defined to be \label{slambda}
\begin{equation}
\label{schurdef}
 s_{D}(\textbf{x}) = \sum_{\mathcal{T}} {x_1}^{\# \textrm{1's in } \mathcal{T}}{x_2}^{\# \textrm{2's in } \mathcal{T}} \cdots,
\end{equation}
where the sum is taken over all semistandard Young tableaux $\mathcal{T}$ of shape $D$. 
When $D=\lambda$ is a partition, $s_\lambda$ is called the \textit{Schur function corresponding to $\lambda$}.

The set $\{s_\lambda | \lambda \vdash n \}$ is a basis of $\Lambda^n$, the set of homogeneous symmetric functions of degree $n$. 
Therefore for each $f \in \Lambda^n$ we can write $f=\sum_{\lambda} a_\lambda s_\lambda$ for appropriate coefficients. 
For any partitions $\mu$ and $\nu$ we have
\begin{equation}
\label{smusnu}
 s_\mu s_\nu = \sum_{\lambda \vdash n} c_{\mu \nu}^{\lambda} s_\lambda, 
\end{equation}
and for any skew diagram $\lambda / \mu$ we have
\begin{equation}
\label{slambdaskewmu}
 s_{\lambda / \mu} = \sum_{\nu \vdash n} c_{\mu \nu}^{\lambda} s_\nu 
\end{equation}
where the $c_{\mu \nu}^{\lambda}$ are the \textit{Littlewood-Richardson coefficients}.
The Littlewood-Richardson coefficients are non-negative integers and count an interesting class of SSYT that we now describe.

Given a tableau $\mathcal{T}$, the \textit{reading word} of $\mathcal{T}$ is the sequence of integers obtained by reading the entries of the rows of $\mathcal{T}$ from right to left, proceeding from the top row to the bottom. 
We say that a sequence $r=r_1,r_2,\ldots, r_k$ is \textit{lattice} if, for each $j$, when reading the sequence from left to right the number of $j$'s that we have read is never less than the number of $j+1$'s that we have read.

\begin{theorem} [Littlewood-Richardson Rule] (\cite{LR})
\label{lr}

For partitions $\lambda, \mu$, and $\nu$, the \textit{Littlewood-Richardson coefficient} $c_{\mu \nu}^{\lambda}$ is the number of SSYTx of shape $\lambda / \mu$, content $\nu$, with lattice reading word.
\end{theorem}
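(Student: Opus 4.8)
The plan is to compute the coefficient of $s_\nu$ in the skew Schur function $s_{\lambda/\mu}$, which by equation (\ref{slambdaskewmu}) equals $c_{\mu\nu}^{\lambda}$, by sorting the semistandard tableaux that define $s_{\lambda/\mu}$ according to \emph{jeu de taquin}. First I would develop the combinatorial machinery: the elementary jeu de taquin slides on a skew SSYT, the \emph{rectification} $\mathrm{rect}(T)$ obtained by sliding the cells of the inner shape out of $T$ one at a time until a straight shape remains, and \emph{Knuth equivalence} of words generated by the two elementary Knuth transformations. The backbone is the \emph{fundamental theorem of jeu de taquin}: $\mathrm{rect}(T)$ does not depend on the order in which the slides are performed, it preserves the content $\nu(T)$, and $\mathrm{rect}(T)=\mathrm{rect}(T')$ if and only if the reading words of $T$ and $T'$ are Knuth equivalent. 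I would establish this by checking that a single slide alters the reading word only by Knuth transformations, and conversely that a single Knuth transformation leaves the Schensted insertion tableau of a word unchanged.

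With rectification available, write $\mathbf{x}^{T}=x_1^{\#\textrm{1's in }T}x_2^{\#\textrm{2's in }T}\cdots$ for the monomial weight of $T$ appearing in (\ref{schurdef}), so that $s_{\lambda/\mu}=\sum_T \mathbf{x}^{T}$ over semistandard $T$ of shape $\lambda/\mu$. I would group these terms by their rectification $V=\mathrm{rect}(T)$. Since rectification preserves content, each $T$ with $\mathrm{rect}(T)=V$ contributes the same monomial $\mathbf{x}^{V}$, giving
\[ s_{\lambda/\mu} = \sum_{T} \mathbf{x}^{T} = \sum_{\nu} \Big( \sum_{V:\, \mathrm{shape}(V)=\nu} \#\{T : \mathrm{rect}(T)=V\}\, \mathbf{x}^{V} \Big). \]
If the count $\#\{T : \mathrm{shape}(T)=\lambda/\mu,\ \mathrm{rect}(T)=V\}$ depends only on the straight shape $\nu$ of $V$, call it $c_\nu$, then the inner sum becomes $c_\nu \sum_{V} \mathbf{x}^{V}=c_\nu s_\nu$ by (\ref{schurdef}) applied to the shape $\nu$, so $s_{\lambda/\mu}=\sum_\nu c_\nu s_\nu$. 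Comparing with (\ref{slambdaskewmu}) and using that the $s_\nu$ form a basis yields $c_{\mu\nu}^{\lambda}=c_\nu$.

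The crux, and the step I expect to be the main obstacle, is precisely showing that $\#\{T : \mathrm{shape}(T)=\lambda/\mu,\ \mathrm{rect}(T)=V\}$ depends only on the shape of $V$ and not on $V$ itself. By the fundamental theorem this count equals the number of tableaux of shape $\lambda/\mu$ whose reading word lies in the Knuth class of $V$, so the assertion is that, within a fixed skew shape, the Knuth classes meeting it have sizes governed only by their common rectification shape. I would prove this by constructing, for any two semistandard tableaux $V,V'$ of the same straight shape $\nu$, a shape-preserving bijection between $\{T:\mathrm{rect}(T)=V\}$ and $\{T:\mathrm{rect}(T)=V'\}$, obtained from the reversibility of the slides: applying to each such $T$ the fixed sequence of reverse slides that carries $V$ to $V'$ (tableau switching). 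Verifying that this operation is well defined and invertible is the technical heart of the argument.

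Finally I would specialize the target to the \emph{superstandard} tableau $U_\nu$ of shape $\nu$, whose $i$-th row consists entirely of $i$'s; this is the unique semistandard tableau of shape $\nu$ and content $\nu$. It then remains to check that $\mathrm{rect}(T)=U_\nu$ holds exactly when $T$ has content $\nu$ and a lattice reading word. One direction is immediate, since $\mathrm{read}(U_\nu)=1^{\nu_1}2^{\nu_2}\cdots$ is lattice and rectification preserves content; for the converse I would show that a word is Knuth equivalent to $\mathrm{read}(U_\nu)$ precisely when it is a lattice word of content $\nu$, using that lattice-ness of a word is detected by its insertion tableau being superstandard. Combining everything, $c_{\mu\nu}^{\lambda}=\#\{T:\mathrm{shape}(T)=\lambda/\mu,\ \mathrm{rect}(T)=U_\nu\}$ is exactly the number of semistandard tableaux of shape $\lambda/\mu$, content $\nu$, with lattice reading word, which is the assertion of the theorem.
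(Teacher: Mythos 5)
The paper does not actually prove Theorem~\ref{lr}: it is quoted as a known result with a citation to Littlewood and Richardson, and all of the paper's own arguments merely \emph{apply} the rule. So there is no internal proof to compare against, and your proposal should be judged on its own terms. What you outline is the standard jeu de taquin proof (due essentially to Sch\"utzenberger, and presented in Fulton's \emph{Young Tableaux} and in the appendix to \cite{stanley}), and the skeleton is correct: the fundamental theorem of jeu de taquin, the grouping of skew SSYTx by rectification, the fact that the fiber size depends only on the rectification's shape, and the identification of the fiber over the superstandard tableau $U_\nu$ with the lattice tableaux of content $\nu$. You have also correctly located the two points where all the real work lives: the well-definedness of rectification (equivalently, that slides induce Knuth transformations and that Knuth classes of straight-shape tableaux are singletons), and the shape-only dependence of $\#\{T:\mathrm{rect}(T)=V\}$, for which your tableau-switching bijection is the right tool but is stated loosely --- ``the fixed sequence of reverse slides carrying $V$ to $V'$'' needs to be made precise (the slides are recorded by a standard tableau of shape $\mu$, and one must check the resulting map is independent of choices and invertible). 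One conventional point deserves care in a full writeup: the paper reads rows right to left from top to bottom, which is the \emph{reverse} of the usual row word, so the lemma ``lattice of content $\nu$ iff insertion tableau is $U_\nu$'' must be stated and proved for the matching insertion variant (column insertion of this word, or row insertion of its reversal); as written your last paragraph glosses over this. With those details supplied, the argument is complete and is a genuinely self-contained route to a statement the paper only cites.
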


For any $f = \sum_{\lambda \vdash n} a_{\lambda} s_{\lambda} \in \Lambda^n$, we say that $f$ is \textit{Schur-positive}, and write $f \geq_s 0$, if each $a_{\lambda} \geq 0$. 
The Littlewood-Richardson rule shows that both $s_{\mu} s_{\nu}$ and $s_{\lambda / \mu}$ are Schur-positive. 
For $f,g \in \Lambda^n$, we will be interested in whether or not the difference $f-g$ is Schur positive. 
We shall write $f \geq_s g$ whenever $f-g$ is Schur-positive. 
If neither $f-g$ nor $g-f$ is Schur-positive we say that $f$ and $g$ are \textit{Schur-incomparable}. 
Further, we write $D_1 \succeq_s D_2$ if $s_{D_1} \geq_s s_{D_2}$.

If we consider the relation $\succeq_s$ on the set of all Schur-equivalent classes of diagrams (i.e. $[D]_s= \{D' | s_{D}=s_{D'}\}$), then $\succeq_s$ defines a partial ordering.
This allows us to view the Hasse diagram for the relation $\succeq_s$ on the set of these Schur-equivalent classes.
Some work in determining these equivalence classes includes \cite{{btvw},{g3},{mvw1},{rsvw}}.

\

We close these preliminaries by mentioning two useful results regarding skew Schur functions.
\begin{theorem}{(\cite{stanley}, Exercise 7.56(a))}
\label{rotate}
Given a skew diagram $D$, 
\begin{equation} s_D= s_{D^{\circ}}. 
\end{equation}
\end{theorem}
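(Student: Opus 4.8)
The plan is to exhibit a content-reversing bijection between the semistandard Young tableaux of shape $D$ and those of shape $D^\circ$, and then to use the fact (visible from (\ref{slambdaskewmu}), which expands $s_{\lambda/\mu}$ in the Schur basis) that $s_D$ is a genuine symmetric function. First I would fix a positive integer $N$ and restrict attention to the specialization $s_D(x_1,\ldots,x_N)$, so that every contributing tableau has all entries in $\{1,\ldots,N\}$. For such an SSYT $T$ of shape $D$, I would define $T^\ast$ by rotating $T$ through $180^\circ$ and then replacing each entry $i$ by its complement $N+1-i$; since the complement sends $\{1,\ldots,N\}$ to itself, the entries of $T^\ast$ again lie in $\{1,\ldots,N\}$.

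The heart of the argument is to check that $T^\ast$ is a valid SSYT of shape $D^\circ$. The rotation carries the shape $D$ to $D^\circ$ by definition, so only the monotonicity conditions need attention. Rotating $T$ reverses the left-to-right order within each row and the top-to-bottom order within each column, so the weakly increasing rows of $T$ become weakly decreasing and the strictly increasing columns become strictly decreasing. Applying $i \mapsto N+1-i$ then reverses every inequality, turning the weakly decreasing rows back into weakly increasing rows and the strictly decreasing columns back into strictly increasing columns; hence $T^\ast$ is semistandard. The map $T \mapsto T^\ast$ is a bijection because performing the same rotate-and-complement operation on $T^\ast$ recovers $T$, using $(D^\circ)^\circ = D$ together with the fact that $i \mapsto N+1-i$ is an involution.

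Next I would track contents. Because complementation sends each entry $i$ to $N+1-i$, the number of $i$'s in $T$ equals the number of $(N+1-i)$'s in $T^\ast$, so $\nu(T^\ast)$ is the reversal of $\nu(T)$ as a weak composition with $N$ parts. Summing monomials over all tableaux therefore gives $s_{D^\circ}(x_1,\ldots,x_N) = \sum_T x^{\mathrm{rev}(\nu(T))}$, which is obtained from $s_D(x_1,\ldots,x_N) = \sum_T x^{\nu(T)}$ by the variable swap $x_k \leftrightarrow x_{N+1-k}$. Since $s_D$ is symmetric, this swap fixes it, whence $s_{D^\circ}(x_1,\ldots,x_N) = s_D(x_1,\ldots,x_N)$. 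As $N$ was arbitrary, the two skew Schur functions agree, proving $s_D = s_{D^\circ}$.

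I expect the only genuinely delicate step to be the verification that rotation-plus-complementation preserves semistandardness, specifically keeping straight that the rotation reverses the orientation of every row and column while the complementation reverses the sense of each inequality, so that the two reversals compose to restore the original SSYT conditions. The remaining ingredients, namely the shape bookkeeping, the content reversal, and the appeal to the symmetry of $s_D$, are routine.
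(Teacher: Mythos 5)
The paper offers no proof of this statement at all: it is quoted as a known result with a citation to Stanley, Exercise 7.56(a), so there is nothing internal to compare against. Your argument is a correct, self-contained proof, and it is in fact the standard one for this exercise: restrict to entries in $\{1,\ldots,N\}$, apply the rotate-and-complement map $i\mapsto N+1-i$, check that the two order reversals (geometric and numerical) cancel so that semistandardness is preserved, observe that the content is reversed, and invoke the symmetry of $s_D$ to absorb the variable reversal before letting $N$ grow. All of these steps are sound, including the bijectivity via $(D^\circ)^\circ=D$ and the involutive complementation. The only point worth flagging is your source for the symmetry of $s_D$: you lean on the expansion $s_{\lambda/\mu}=\sum_\nu c_{\mu\nu}^{\lambda}s_\nu$ from the paper, which is legitimate as the paper states it, but in a ground-up development that expansion is usually itself a consequence of first knowing $s_D$ is symmetric (e.g.\ via the Bender--Knuth involutions), so if you wanted a fully independent proof you would cite or prove the symmetry directly rather than route it through the Littlewood--Richardson expansion. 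As written, though, the proof is complete relative to the facts the paper takes as given.
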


\begin{theorem}
\label{disjprod}
The Schur function of any disconnected skew diagram is reducible. If $D = D_1 \oplus D_2$, then we have 
\begin{equation}
s_{D}=s_{D_1} s_{D_2}.
\end{equation}
\end{theorem}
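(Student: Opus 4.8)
The plan is to show that the defining sum (\ref{schurdef}) for $s_D$ factors combinatorially, by exhibiting a weight-preserving bijection between the semistandard Young tableaux of shape $D = D_1 \oplus D_2$ and the pairs consisting of an SSYT of shape $D_1$ together with an SSYT of shape $D_2$. The whole argument hinges on the geometry of the direct sum: by definition the top-right box of $D_1$ lies one step left and one step down from the bottom-left box of $D_2$, so $D_1$ and $D_2$ occupy a pairwise disjoint collection of rows and a pairwise disjoint collection of columns. First I would record this row- and column-disjointness explicitly, since it is exactly what makes the two pieces of a filling behave independently.

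Next I would use this disjointness to decouple the semistandard conditions. A filling $\mathcal{T}$ of $D$ is semistandard precisely when every row is weakly increasing and every column strictly increasing. Because no row of $D_1$ meets a row of $D_2$ and no column of $D_1$ meets a column of $D_2$, each such constraint involves boxes lying entirely within $D_1$ or entirely within $D_2$; there is no constraint linking an entry of $D_1$ to an entry of $D_2$. Hence restricting $\mathcal{T}$ to $D_1$ and to $D_2$ yields an SSYT $\mathcal{T}_1$ of shape $D_1$ and an SSYT $\mathcal{T}_2$ of shape $D_2$, and conversely any such pair $(\mathcal{T}_1,\mathcal{T}_2)$ glues back to an SSYT of shape $D$. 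This correspondence is a bijection, and since the multiset of entries of $\mathcal{T}$ is the disjoint union of those of $\mathcal{T}_1$ and $\mathcal{T}_2$, the monomial weights multiply: $x^{\nu(\mathcal{T})} = x^{\nu(\mathcal{T}_1)} x^{\nu(\mathcal{T}_2)}$.

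With the bijection in hand, the identity follows by summing. Grouping the defining sum for $s_D$ according to the pair $(\mathcal{T}_1, \mathcal{T}_2)$ and factoring the product of monomials gives
\begin{equation}
s_D = \sum_{\mathcal{T}_1, \mathcal{T}_2} x^{\nu(\mathcal{T}_1)} x^{\nu(\mathcal{T}_2)} = \Bigl( \sum_{\mathcal{T}_1} x^{\nu(\mathcal{T}_1)} \Bigr) \Bigl( \sum_{\mathcal{T}_2} x^{\nu(\mathcal{T}_2)} \Bigr) = s_{D_1} s_{D_2},
\end{equation}
where $\mathcal{T}_1$ and $\mathcal{T}_2$ range over the SSYTx of shape $D_1$ and $D_2$ respectively. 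For the reducibility claim I would then observe that within a single row or a single column of any skew diagram the boxes form a contiguous segment, so distinct connected components share neither a row nor a column; consequently a disconnected skew diagram can always be written as a direct sum $D_1 \oplus D_2$ of two nonempty pieces, and the factorization just proved expresses its Schur function as a product of two Schur functions of positive degree.

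I do not anticipate a serious obstacle: the only point demanding care is the decoupling step, where one must be certain that the disjointness of rows and of columns leaves no semistandard constraint straddling the two subdiagrams. Once that is nailed down, the bijection and the factorization of weights are routine.
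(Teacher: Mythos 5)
Your proof is correct and follows essentially the same route as the paper: both establish the weight-preserving bijection $\mathcal{T} \leftrightarrow (\mathcal{T}_1,\mathcal{T}_2)$ between SSYTx of shape $D_1 \oplus D_2$ and pairs of SSYTx of the summands, and then factor the defining sum. You simply spell out in more detail the row- and column-disjointness that the paper leaves implicit, and you add the (correct) observation that any disconnected skew diagram decomposes as a direct sum, which justifies the reducibility claim.
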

\begin{proof} Any SSYT of shape $D_1 \oplus D_2$ gives rise to SSYTx of shape $D_1$ and $D_2$ by restricting to the subdiagrams $D_1$ and $D_2$. 
Conversely, any pair of SSYTx $\mathcal{T}_1$ of shape $D_1$ and $\mathcal{T}_2$ of shape $D_2$ give rise to the tableau $\mathcal{T}_1 \oplus \mathcal{T}_2$ of shape $D_1 \oplus D_2$, which is clearly semistandard. \qed
\end{proof}

A thorough study of this material can be found in sources such as \cite{sagan} or \cite{stanley}.

\section{Staircases and Fat Staircases}

A Ferrers diagram is a \textit{staircase} if it is the Ferrers diagram of a partition of the form $\lambda = (n,n-1,n-2,\ldots, 2,1)$ or if it is the $180^{\circ}$ rotation of such a diagram. 
Both these diagrams are referred to as \textit{staircases of length $n$} and will be denoted by $\delta_n$ and $\Delta_n$ respectively.

\begin{example} Here we see the two staircases of length 5.

\

\

\

\setlength{\unitlength}{0.35mm}

\begin{picture}(100,50)(-80,-5)

\put(-10,50){\framebox(10,10)[tl]{ }}
\put(0,50){\framebox(10,10)[tl]{ }}
\put(10,50){\framebox(10,10)[tl]{ }}
\put(20,50){\framebox(10,10)[tl]{ }}
\put(30,50){\framebox(10,10)[tl]{ }}
     
\put(-10,40){\framebox(10,10)[tl]{ }}
\put(0,40){\framebox(10,10)[tl]{ }}
\put(10,40){\framebox(10,10)[tl]{ }}
\put(20,40){\framebox(10,10)[tl]{ }}
 
\put(-10,30){\framebox(10,10)[tl]{ }}
\put(0,30){\framebox(10,10)[tl]{ }}
\put(10,30){\framebox(10,10)[tl]{ }}

\put(-10,20){\framebox(10,10)[tl]{ }}
\put(0,20){\framebox(10,10)[tl]{ }}

\put(-10,10){\framebox(10,10)[tl]{ }}

\put(20,0){$\delta_5$}

\put(100,0){$\Delta_5$}

\put(120,50){\framebox(10,10)[tl]{ }}
\put(120,40){\framebox(10,10)[tl]{ }}
\put(120,30){\framebox(10,10)[tl]{ }}
\put(120,20){\framebox(10,10)[tl]{ }}
\put(120,10){\framebox(10,10)[tl]{ }}

\put(110,40){\framebox(10,10)[tl]{ }}
\put(110,30){\framebox(10,10)[tl]{ }}
\put(110,20){\framebox(10,10)[tl]{ }}
\put(110,10){\framebox(10,10)[tl]{ }}

\put(100,30){\framebox(10,10)[tl]{ }}
\put(100,20){\framebox(10,10)[tl]{ }}
\put(100,10){\framebox(10,10)[tl]{ }}

\put(90,20){\framebox(10,10)[tl]{ }}
\put(90,10){\framebox(10,10)[tl]{ }}

\put(80,10){\framebox(10,10)[tl]{ }}

\end{picture}

\end{example}

Given a composition $\alpha=(\alpha_1, \ldots,\alpha_n)$, we let 
\[ \delta_\alpha= (n^{\alpha_n},{n-1}^{\alpha_{n-1}}, \ldots,2^{\alpha_2}, 1^{\alpha_1}) \textrm{ and } \Delta_\alpha= (n^{\alpha_n},{n-1}^{\alpha_{n-1}}, \ldots,2^{\alpha_2}, 1^{\alpha_1})^\circ. \]
We call a skew diagram $D$ a \textit{fat staircase} if $D=\delta_{\alpha}$ or $D=\Delta_{\alpha}$ for some composition $\alpha$.
The numbers $\alpha_i$ count the number of rows of $D$ with $i$ boxes, for each $i$.
Using this notation the regular staircases may be expressed as $\delta_n = \delta_{(1^n)}$ and $\Delta_n = \Delta_{(1^n)}$, respectively. 
Both fat staircases $\delta_\alpha$ and $\Delta_\alpha$ have width $= l(\alpha)$ and length $= |\alpha| = \sum_{i=1}^{n} \alpha_i$. 

\begin{example}
Here we see the the fat staircases $\delta_{(1,2,2)}$ and $\Delta_{(3,1,2,3)}$.

\

\

\

\setlength{\unitlength}{0.35mm}

\begin{picture}(100,50)(-50,-15)

\put(10,10){\framebox(10,10)[tl]{ }}
\put(20,10){\framebox(10,10)[tl]{ }}
\put(30,10){\framebox(10,10)[tl]{ }}
     
\put(10,0){\framebox(10,10)[tl]{ }}
\put(20,0){\framebox(10,10)[tl]{ }}
\put(30,0){\framebox(10,10)[tl]{ }}
 
\put(10,-10){\framebox(10,10)[tl]{ }}
\put(20,-10){\framebox(10,10)[tl]{ }}

\put(10,-20){\framebox(10,10)[tl]{ }}
\put(20,-20){\framebox(10,10)[tl]{ }}

\put(10,-30){\framebox(10,10)[tl]{ }}

\put(20,-40){$\delta_{(1,2,2)}$}

\put(115,-40){$\Delta_{(3,1,2,3)}$}

\put(100,-30){\framebox(10,10)[tl]{ }}
\put(110,-30){\framebox(10,10)[tl]{ }}
\put(120,-30){\framebox(10,10)[tl]{ }}
\put(130,-30){\framebox(10,10)[tl]{ }}

\put(100,-20){\framebox(10,10)[tl]{ }}
\put(110,-20){\framebox(10,10)[tl]{ }}
\put(120,-20){\framebox(10,10)[tl]{ }}
\put(130,-20){\framebox(10,10)[tl]{ }}

\put(100,-10){\framebox(10,10)[tl]{ }}
\put(110,-10){\framebox(10,10)[tl]{ }}
\put(120,-10){\framebox(10,10)[tl]{ }}
\put(130,-10){\framebox(10,10)[tl]{ }}

\put(110,0){\framebox(10,10)[tl]{ }}
\put(120,0){\framebox(10,10)[tl]{ }}
\put(130,0){\framebox(10,10)[tl]{ }}

\put(110,10){\framebox(10,10)[tl]{ }}
\put(120,10){\framebox(10,10)[tl]{ }}
\put(130,10){\framebox(10,10)[tl]{ }}

\put(120,20){\framebox(10,10)[tl]{ }}
\put(130,20){\framebox(10,10)[tl]{ }}

\put(130,30){\framebox(10,10)[tl]{ }}

\put(130,40){\framebox(10,10)[tl]{ }}

\put(130,50){\framebox(10,10)[tl]{ }}

\end{picture}

\end{example}

%\newpage

\

\

\

Given a composition $\alpha$, $k \geq 0$, and a partition $\lambda$ with $\lambda_1 -k \leq l(\alpha)$ we now define $\mathcal{S}(\lambda, \alpha;k)$ to be the diagram obtained by placing $\lambda$ immediately below $\Delta_{\alpha}$ such that the rows of the two diagrams overlap in precisely $\lambda_1 -k$ positions.
We call $\mathcal{S}(\lambda, \alpha;k)$ a \textit{fat staircase with bad foundation}.
The subdiagram $\lambda$ is called the foundation of $\mathcal{S}(\lambda,\alpha;k)$.

The fact that $\Delta_{\alpha}$ and $\lambda$ overlap in precisely $\lambda_1 -k$ positions means that the first row of $\lambda$ begins exactly one box below and $k$ boxes left of the bottom-left box of the diagram $\Delta_{\alpha}$.

\begin{example} If we take $\alpha = (1,1,3,1,2,1)$, $\lambda = (6,5,5,5,3)$, and $k=0$, then we obtain the following staircase with bad foundation $\mathcal{S}(\lambda, \alpha;k)$.

\

\

\

\setlength{\unitlength}{0.3mm}

\begin{picture}(000,140)(110,-80)

\put(180,30){$\Delta_{\alpha}$}

\put(182,-50){$\lambda$}

\put(160,-20){\dashbox{3}(160,0)[tl]{ }}

\put(210,-70){\framebox(10,10)[tl]{ }}
\put(220,-70){\framebox(10,10)[tl]{ }}
\put(230,-70){\framebox(10,10)[tl]{ }}

\put(210,-60){\framebox(10,10)[tl]{ }}
\put(220,-60){\framebox(10,10)[tl]{ }}
\put(230,-60){\framebox(10,10)[tl]{ }}
\put(240,-60){\framebox(10,10)[tl]{ }}
\put(250,-60){\framebox(10,10)[tl]{ }}

\put(210,-50){\framebox(10,10)[tl]{ }}
\put(220,-50){\framebox(10,10)[tl]{ }}
\put(230,-50){\framebox(10,10)[tl]{ }}
\put(240,-50){\framebox(10,10)[tl]{ }}
\put(250,-50){\framebox(10,10)[tl]{ }}

\put(210,-40){\framebox(10,10)[tl]{ }}
\put(220,-40){\framebox(10,10)[tl]{ }}
\put(230,-40){\framebox(10,10)[tl]{ }}
\put(240,-40){\framebox(10,10)[tl]{ }}
\put(250,-40){\framebox(10,10)[tl]{ }}

\put(210,-30){\framebox(10,10)[tl]{ }}
\put(220,-30){\framebox(10,10)[tl]{ }}
\put(230,-30){\framebox(10,10)[tl]{ }}
\put(240,-30){\framebox(10,10)[tl]{ }}
\put(250,-30){\framebox(10,10)[tl]{ }}
\put(260,-30){\framebox(10,10)[tl]{ }}

\put(210,-20){\framebox(10,10)[tl]{ }}
\put(220,-20){\framebox(10,10)[tl]{ }}
\put(230,-20){\framebox(10,10)[tl]{ }}
\put(240,-20){\framebox(10,10)[tl]{ }}
\put(250,-20){\framebox(10,10)[tl]{ }}
\put(260,-20){\framebox(10,10)[tl]{ }}

\put(220,-10){\framebox(10,10)[tl]{ }}
\put(230,-10){\framebox(10,10)[tl]{ }}
\put(240,-10){\framebox(10,10)[tl]{ }}
\put(250,-10){\framebox(10,10)[tl]{ }}
\put(260,-10){\framebox(10,10)[tl]{ }}

\put(220,0){\framebox(10,10)[tl]{ }}
\put(230,0){\framebox(10,10)[tl]{ }}
\put(240,0){\framebox(10,10)[tl]{ }}
\put(250,0){\framebox(10,10)[tl]{ }}
\put(260,0){\framebox(10,10)[tl]{ }}

\put(230,10){\framebox(10,10)[tl]{ }}
\put(240,10){\framebox(10,10)[tl]{ }}
\put(250,10){\framebox(10,10)[tl]{ }}
\put(260,10){\framebox(10,10)[tl]{ }}

\put(240,20){\framebox(10,10)[tl]{ }}
\put(250,20){\framebox(10,10)[tl]{ }}
\put(260,20){\framebox(10,10)[tl]{ }}

\put(240,30){\framebox(10,10)[tl]{ }}
\put(250,30){\framebox(10,10)[tl]{ }}
\put(260,30){\framebox(10,10)[tl]{ }}

\put(240,40){\framebox(10,10)[tl]{ }}
\put(250,40){\framebox(10,10)[tl]{ }}
\put(260,40){\framebox(10,10)[tl]{ }}

\put(250,50){\framebox(10,10)[tl]{ }}
\put(260,50){\framebox(10,10)[tl]{ }}

\put(260,60){\framebox(10,10)[tl]{ }}

\end{picture}

\end{example}

One of the advantages in computing the skew Schur functions of fat staircases with bad foundations is that, when using the Littlewood-Richarson rule, the $\Delta_\alpha$ portion of the diagram can be filled in only one way. 
By using Theorem~\ref{rotate} we can be see this algebraically from the equation 
\[ s_{\Delta_\alpha} = s_{\Delta_\alpha^\circ} = s_{\delta_\alpha}, \]
 where $s_{\delta_\alpha}$ is a Schur function. 
The unique filling of $\Delta_\alpha$ obeying the semistandard conditions and the lattice condition is easily seen to be the filling that places the entries $1,2,\ldots, l$ into each column of length $l$.

%\newpage

\setlength{\unitlength}{0.5mm}
\begin{picture}(0,0)(0,0)
\put(90,-74){\line(0,1){26} }
\end{picture}
\begin{lemma} 

\label{kfatfirstrowlemma}
Let $\mathcal{S}(\lambda,\alpha;k)$ be a fat staircase with bad foundation for some $k \geq 0$ and $\mathcal{T}$ be a SSYT of shape $\mathcal{S}(\lambda,\alpha;k)$ whose reading word is lattice.
If $\alpha=(\alpha_1,\dots, \alpha_n)$, then the entries in the first row of the foundation of $\mathcal{T}$ consist values taken from the set 
\[R_{\alpha,k} = \left\{ 1+\sum_{i=1}^{j} \alpha_{n+1-i} \textrm{ } \textrm{ } \textrm{ } j= 1, 2, \ldots, n  \right\} \cup \left\{ \begin{array}{cll}
\{1\} & \mbox{if} & k > 0 \\
\emptyset &  \mbox{if} & k=0. 
\end{array}\right.\]
Furthermore, the value $1$ can occur at most $k$ times and the rest of the values can appear at most once.
\end{lemma}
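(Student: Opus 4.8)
The plan is to read off all of $\Delta_\alpha$ first, record the value multiplicities it forces, and then walk along the first row of the foundation extracting constraints from the lattice and column-strict conditions. First I would fix the structure of the forced filling of $\Delta_\alpha$. Since $\Delta_\alpha$ is the $180^\circ$ rotation of $\delta_\alpha$, its columns read from left to right have heights $h_j = \sum_{i=1}^{j}\alpha_{n+1-i}$, and these are \emph{strictly} increasing because every $\alpha_i \geq 1$. As explained just before the lemma, the unique semistandard filling with lattice reading word places $1, 2, \ldots, h_j$ down column $j$, so the bottom entry of column $j$ is $h_j$. Consequently, once the whole of $\Delta_\alpha$ has been read, the number of occurrences of a value $v$ is $m_v = |\{\, j : h_j \geq v \,\}|$, and $m_v - m_{v+1} = |\{\, j : h_j = v \,\}| \in \{0,1\}$, equal to $1$ exactly when $v \in \{h_1,\ldots,h_n\}$. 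Equivalently, $m_{w-1} > m_w$ holds precisely when $w-1 \in \{h_j\}$, i.e. when $w \in R_{\alpha,k}\setminus\{1\}$.

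Next I would use that $\Delta_\alpha$ lies entirely above the foundation and that the first row is the top row of the foundation: the reading word begins with all of $\Delta_\alpha$ and then traverses the first row of the foundation from right to left, and along this row the entries weakly decrease in reading order. The key observation is that, to legally place a value $w \geq 2$, latticeness of the resulting prefix requires the number of $(w-1)$'s read so far to strictly exceed the number of $w$'s read so far. Because the row is weakly decreasing in reading order, no $(w-1)$ has yet been read inside the foundation at the moment a $w$ is placed, so the count of $(w-1)$'s equals exactly $m_{w-1}$. Placing the first $w$ then forces $m_{w-1} > m_w$, hence $w \in \{\, 1+h_j : 1 \leq j \leq n \,\}$; placing a second copy of $w$ would demand $m_{w-1} > m_w + 1$, which is impossible since $m_{w-1} - m_w \leq 1$. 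Thus every value $w \geq 2$ occurring in the first row lies in $\{\,1+h_j\,\}$ and occurs at most once.

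Finally I would dispose of the value $1$ using column-strictness. The bottom row of $\Delta_\alpha$ is its longest row, of width $n$, so each of the $\lambda_1 - k$ overlapping boxes of the foundation's first row sits immediately beneath the bottom box of some column $j$ of $\Delta_\alpha$, whose entry is $h_j \geq 1$; column-strictness forces the entry below to be at least $2$. Hence $1$ can appear only in the $k$ boxes of the first row that overhang $\Delta_\alpha$ to the left, giving at most $k$ ones and none at all when $k=0$, exactly matching the definition of $R_{\alpha,k}$. The hard part will be the bookkeeping in the middle step: one must argue carefully that the weak monotonicity of the row in reading order makes the foundation's contribution to the count of $(w-1)$'s vanish at the instant $w$ is placed, so that this count is precisely $m_{w-1}$. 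It is this identity, together with the inequality $m_{w-1} - m_w \leq 1$ coming from the strict increase of the $h_j$, that simultaneously pins down the admissible values and caps their multiplicities.
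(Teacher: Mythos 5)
Your proof is correct and follows essentially the same route as the paper's: the unique forced filling of $\Delta_\alpha$, the lattice condition at the moment a value $t>1$ is read in the first row (comparing the multiplicities of $t-1$ and $t$ contributed by $\Delta_\alpha$), and column-strictness to confine the $1$'s to the $k$ overhanging boxes. Your version merely makes explicit the multiplicity bookkeeping ($m_v$ and the bound $m_{w-1}-m_w\le 1$) that the paper leaves implicit in the phrase ``the only instances when this occurs.''
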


\begin{proof}

Let $R$ be the first row of the foundation of $\mathcal{T}$ and $t \in R$.

Since $\mathcal{T}$ is a SSYT, the columns strictly increase. Thus $t = 1$ is allowed if and only if $k \geq 1$ since it is precisely in that case that the first value in $R$ is not below an entry of $\Delta_{\alpha}$. 
Furthermore, since there are only $k$ boxes from the first row of the foundation of $\mathcal{T}$ that extend out from $\Delta_\alpha$, there can be at most $k$ $1$'s in $R$. 

If $t > 1$ then, when reading the row $R$ from right to left, the lattice condition implies that there is at least one more $t-1$ in $\Delta_{\alpha}$ than there are $t$'s in $\Delta_{\alpha}$. 
Since the content of $\Delta_{\alpha}$ is $(n^{\alpha_n},{n-1}^{\alpha_{n-1}}, \ldots, 1^{\alpha_1})$, the only instances when this occurs are when $t=1+\sum_{i=1 \ldots j} \alpha_{n+1-j}$ for $j=1,2,\ldots,n$.
Therefore every entry of $R$ is an element of $R_{\alpha,k}$.
Further, if a value $t>1$ appeared twice in $R$, then the lattice condition would be violated. 
Hence each $t \in R_{\alpha,k}$, $t \neq 1$, can appear at most once in $R$. \qed

\end{proof}

The next result tells us when we may obtain a SSYT of shape $\mathcal{S}(\lambda,\alpha;k)$ with lattice reading word from a SSYT of shape $\lambda  \oplus \Delta_\alpha$ with lattice reading word. % where $D_1 \oplus D_2$ denotes the disjoint union of the diagrams $D_1$ and $D_2$.

\begin{lemma} 
\label{kfatjoinlemma}
Let $\alpha$ be a composition, $\lambda$ be a partition, and $k \geq 0$ such that $\lambda_1 -k \leq l(\alpha)$.
If $T$ is a SSYT of shape $\lambda  \oplus \Delta_\alpha$ with lattice reading word such that there are at most $k$ $1$'s in the first row of $\lambda$, then the tableau of shape $\mathcal{S}(\lambda,\alpha;k)$ obtained from $T$ by shifting the foundation $\lambda$ to the right is also a SSYT with lattice reading word.

\end{lemma}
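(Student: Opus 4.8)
The plan is to show that the rightward shift changes neither the entries of the tableau nor the order in which they are read, so that the only property which can possibly fail is column-strictness along the newly created boundary; I then verify that boundary condition by comparing the forced entries of $\Delta_\alpha$ with the entries in the first row of $\lambda$. First I would observe that moving the foundation $\lambda$ horizontally does not alter any entry, and that, since $\lambda$ lies entirely below $\Delta_\alpha$ both before and after the shift, it changes neither the top-to-bottom order of the rows nor the left-to-right order within any row. Hence the reading word of the shifted tableau is literally the reading word of $T$, and so it is lattice. For the same reason every row of the shifted tableau is still weakly increasing (no row meets both $\lambda$ and $\Delta_\alpha$), and every column lying wholly inside $\lambda$ or wholly inside $\Delta_\alpha$ is unchanged. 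Thus the only thing to check is that each of the $\lambda_1 - k$ columns in which the first row of $\lambda$ now sits directly beneath the bottom row of $\Delta_\alpha$ is strictly increasing.

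Next I would pin down the entries involved in those $\lambda_1 - k$ comparisons. By the forced filling of $\Delta_\alpha$ discussed after Theorem~\ref{rotate}, the $\Delta_\alpha$ part of $T$ places $1,2,\ldots,\ell$ down each column of length $\ell$, so the bottom entry of each column equals its length. A short computation with the column lengths of the rotated staircase shows that the $m$-th box of the bottom row of $\Delta_\alpha$, counted from the left, has entry $\sum_{s=1}^{m}\alpha_{n+1-s}$. In the shifted diagram this box sits directly above the $(k+m)$-th box of the first row of $\lambda$, for $m = 1, \ldots, \lambda_1 - k$. Column-strictness in the $m$-th overlapping column is therefore exactly the inequality that the entry in position $k+m$ of the first row of $\lambda$ is at least $1 + \sum_{s=1}^{m}\alpha_{n+1-s}$.

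To control those entries I would invoke Lemma~\ref{kfatfirstrowlemma}. Since the direct sum $\lambda \oplus \Delta_\alpha$ is precisely $\mathcal{S}(\lambda,\alpha;\lambda_1)$, that lemma tells us the first row of $\lambda$ consists of values drawn from $R_{\alpha,\lambda_1}$, with every value other than $1$ occurring at most once. Because the row is weakly increasing and, by hypothesis, contains at most $k$ ones, its entries in positions $k+1, \ldots, \lambda_1$ form a strictly increasing sequence of distinct elements of $\{1 + \sum_{s=1}^{j}\alpha_{n+1-s} : j = 1, \ldots, n\}$. A strictly increasing choice of distinct such values forces the entry in position $k+m$ to be at least $1 + \sum_{s=1}^{m}\alpha_{n+1-s}$, which strictly exceeds the entry $\sum_{s=1}^{m}\alpha_{n+1-s}$ of $\Delta_\alpha$ lying above it, giving the required column-strictness. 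This is exactly where the hypothesis $\lambda_1 - k \leq l(\alpha) = n$ is used: it guarantees that there are enough distinct admissible values to fill the $\lambda_1 - k$ overlapping positions, so that the threshold $1 + \sum_{s=1}^{m}\alpha_{n+1-s}$ is actually attained.

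I expect the main obstacle to be the indexing in the last two steps, namely correctly matching each overlapping column of $\mathcal{S}(\lambda,\alpha;k)$ to the corresponding column length of $\Delta_\alpha$, and checking that ``at most $k$ ones followed by distinct larger admissible values'' really forces the $m$-th overlapping entry up to $1 + \sum_{s=1}^{m}\alpha_{n+1-s}$ rather than merely past $1$. Once this alignment and the counting bound are set up carefully, each of the individual column comparisons is immediate, and the reading-word and row conditions require no further work.
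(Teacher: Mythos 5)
Your proof is correct and follows essentially the same route as the paper's: reduce everything to column-strictness at the $\lambda_1-k$ join positions, bound the entry in position $k+m$ of the first row of $\lambda$ below by the $m$-th smallest element of $R_{\alpha,k}\setminus\{1\}$, namely $1+\sum_{s=1}^{m}\alpha_{n+1-s}$, and compare with the forced entry $\sum_{s=1}^{m}\alpha_{n+1-s}$ of $\Delta_\alpha$ directly above. The only (harmless) differences are that you invoke Lemma~\ref{kfatfirstrowlemma} directly on $\lambda\oplus\Delta_\alpha=\mathcal{S}(\lambda,\alpha;\lambda_1)$ where the paper re-runs that lemma's argument, and you treat $k=0$ and $k\geq 1$ uniformly where the paper separates the two cases.
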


\begin{proof}
Let $T$ be a SSYT of shape $\lambda  \oplus \Delta_\alpha$ with lattice reading word and let $T_k'$ be the tableau of shape $\mathcal{S}(\lambda,\alpha;k)$ obtained from $T$ by shifting the foundation $\lambda$ to the right.
Since shifting $\lambda$ to the right does not affect the order in which the entries are read, $T_k'$ has a lattice reading word.
Also, the rows of $T_k'$ weakly increase since they are the same as the rows of $T$.
Further, to check that the columns of $T_k'$ strictly increase, we need only check that they strictly increase at the positions where the two subdiagrams $\Delta_{\alpha}$ and $\lambda$ are joined.

Let $R$ denote the first row of $\lambda$ and $\alpha= (\alpha_1, \ldots, \alpha_n)$. 
As in the proof of Lemma~\ref{kfatfirstrowlemma}, the lattice condition on $T$ implies that the entries of $R$ consist of values of $R_{\alpha, k}$. 
Further, the value $1$ can occur at most $k$ times and the rest of the values of $R$ are distinct.
Let $q$ be the number of times $1$ appears in $R$, so that $k \geq q$. 
Further, let $r_1 \leq r_2 \leq \ldots \leq r_{\lambda_1}$ be the entries of $R$. %%%, where we may include $1$ only once despite its multiplicity in $R$. 

\

Consider the case $k \geq 1$. 
Since $r_1=r_2 =\ldots = r_q =1$, we have $r_q = \textrm{min}(R_{\alpha,k})$ and for each $1 \leq j \leq n$ we have  
\[ r_{j+q} \geq \textrm{ the } (j+1) \textrm{-th smallest value of } R_{\alpha,k} = 1 + \sum_{i=1}^{j} \alpha_{n+1-i}. \] 
Since $k \geq q$, for each $1 \leq j \leq n$ we have 
\[ r_{j+k} \geq r_{j+q} \geq 1 + \sum_{i=1}^{j} \alpha_{n+1-i}. \]
As illustrated in the diagram below, the entry $r_{j+k}$ is beneath $\sum_{i=1}^{j} \alpha_{n+1-i}$ boxes. 
From the unique filling of $\Delta_{\alpha}$, the entry of $\Delta_\alpha$ directly above $r_{j+k}$ is $\sum_{i=1}^{j} \alpha_{n+1-i}$.

\

\

\

\

\setlength{\unitlength}{0.7mm}

\begin{picture}(100,80)(30,5)

\put(115,105){$j$}

\put(142,105){$n-j$}

\put(40,15){$\sum_{i=1}^{j} \alpha_{n+1-i}$}

\put(75,-30){\line(0,1){90}}

\put(75,-30){\line(1,0){5}}
\put(75,60){\line(1,0){5}}

\put(90,100){\line(1,0){49}}
\put(141,100){\line(1,0){19}}

\put(90,100){\line(0,-1){5}}
\put(139,100){\line(0,-1){5}}

\put(141,100){\line(0,-1){5}}
\put(160,100){\line(0,-1){5}}

\put(90,-30){\dashbox{2}(70,120)[tl]{ }}

\put(50,-62){\dashbox{2}(110,30)[tl]{ }}

\put(175,10){$\Delta_\alpha$}
\put(175,-55){$\lambda $}

\put(90,-30){\framebox(10,10)[tl]{ }}
\put(100,-30){\framebox(10,10)[tl]{ }}
\put(110,-30){\framebox(10,10)[tl]{ }}
\put(130,-30){\framebox(10,10)[tl]{ }}
\put(150,-30){\framebox(10,10)[tl]{ }}

\put(100,-20){\framebox(10,10)[tl]{ }}
\put(110,-20){\framebox(10,10)[tl]{ }}
\put(130,-20){\framebox(10,10)[tl]{ }}
\put(150,-20){\framebox(10,10)[tl]{ }}

\put(100,-10){\framebox(10,10)[tl]{ }}
\put(110,-10){\framebox(10,10)[tl]{ }}
\put(130,-10){\framebox(10,10)[tl]{ }}
\put(150,-10){\framebox(10,10)[tl]{ }}

\put(110,0){\framebox(10,10)[tl]{ }}
\put(130,0){\framebox(10,10)[tl]{ }}
\put(150,0){\framebox(10,10)[tl]{ }}

\put(110,10){\framebox(10,10)[tl]{ }}
\put(130,10){\framebox(10,10)[tl]{ }}
\put(150,10){\framebox(10,10)[tl]{ }}

\put(130,20){\framebox(10,10)[tl]{ }}
\put(150,20){\framebox(10,10)[tl]{ }}

\put(130,30){\framebox(10,10)[tl]{ }}
\put(150,30){\framebox(10,10)[tl]{ }}

\put(130,40){\framebox(10,10)[tl]{ }}
\put(150,40){\framebox(10,10)[tl]{ }}

\put(130,50){\framebox(10,10)[tl]{ }}
\put(150,50){\framebox(10,10)[tl]{ }}

\put(150,60){\framebox(10,10)[tl]{ }}

\put(150,70){\framebox(10,10)[tl]{ }}

\put(150,80){\framebox(10,10)[tl]{ }}

\put(91,-39){$r_{1+k}$}
\put(101,-39){$r_{2+k}$}
\put(111,-39){$r_{3+k}$}
\put(131,-39){$r_{j+k}$}

\put(53,-39){$r_1$}
\put(63,-39){$r_2$}
\put(72,-49){$\cdots$}
\put(83,-39){$r_k$}

\put(50,-42){\framebox(10,10)[tl]{ }}
\put(60,-42){\framebox(10,10)[tl]{ }}
\put(80,-42){\framebox(10,10)[tl]{ }}

\put(90,-42){\framebox(10,10)[tl]{ }}
\put(100,-42){\framebox(10,10)[tl]{ }}
\put(110,-42){\framebox(10,10)[tl]{ }}
\put(130,-42){\framebox(10,10)[tl]{ }}
     
\put(50,-52){\framebox(10,10)[tl]{ }}
\put(60,-52){\framebox(10,10)[tl]{ }}
\put(80,-52){\framebox(10,10)[tl]{ }}

\put(90,-52){\framebox(10,10)[tl]{ }}
\put(100,-52){\framebox(10,10)[tl]{ }}
\put(110,-52){\framebox(10,10)[tl]{ }}
\put(130,-52){\framebox(10,10)[tl]{ }}

\put(50,-62){\framebox(10,10)[tl]{ }}
\put(60,-62){\framebox(10,10)[tl]{ }}
\put(80,-62){\framebox(10,10)[tl]{ }}

\put(90,-62){\framebox(10,10)[tl]{ }}
\put(100,-62){\framebox(10,10)[tl]{ }}

\put(122,-49){$\cdots$}
\put(142,-49){$\cdots$}

\put(122,-5){\ldots}
\put(142,-5){\ldots}

\end{picture}

\

\

\

\
\

\

\

\

\

\

\

\

\

Thus the columns strictly increase. 
Therefore $T_1'$ is a SSYT with lattice reading word, as desired.

\

Now consider the case when $k=0$. Then for each $1 \leq j \leq n$ we have 
\[ r_j \geq j \textrm{-th smallest value of } R_{\alpha, k}  \geq 1 + \sum_{i=1}^{j} \alpha_{n+1-i}. \]
Also, the entry $r_j$ is beneath precisely $\sum_{i=1}^{j} \alpha_{n+1-i}$ boxes, so the entry of $\Delta_\alpha$ directly above $r_j$ is $\sum_{i=1}^{j} \alpha_{n+1-i}$. 
Thus the columns strictly increase. 
Therefore $T_k'$ is a SSYT with lattice reading word, as desired.
\qed

\end{proof}

\begin{example}
Let $\alpha = (2,2,1)$, $\lambda = (3,2)$, and $k=2$.
Consider the SSYT of shape $\lambda \oplus \Delta_\alpha$ with lattice reading word and two $1$'s in the first row of $\lambda$ shown on the left.
This gives rise to the SSYT of shape $\mathcal{S}(\lambda,\alpha;2)$ with lattice reading word shown on the right.

\setlength{\unitlength}{0.35mm}

\begin{picture}(0,100)(-30,10)

\put(70,90){\framebox(10,10)[c]{\textrm{ }$1$ }}
\put(70,80){\framebox(10,10)[c]{\textrm{ }$2$ } }
\put(70,70){\framebox(10,10)[c]{\textrm{ }$3$ } }
\put(70,60){\framebox(10,10)[c]{\textrm{ }$4$ } }
\put(70,50){\framebox(10,10)[c]{\textrm{ }$5$ } }

\put(60,70){\framebox(10,10)[c]{\textrm{ }$1$ } }
\put(60,60){\framebox(10,10)[c]{\textrm{ }$2$ } }
\put(60,50){\framebox(10,10)[c]{\textrm{ }$3$ } }

\put(50,50){\framebox(10,10)[c]{\textrm{ }$1$ } }

\put(20,40){\framebox(10,10)[c]{\textrm{ }$1$ } }
\put(30,40){\framebox(10,10)[c]{\textrm{ }$1$ } }
\put(40,40){\framebox(10,10)[c]{\textrm{ }$6$ } }

\put(20,30){\framebox(10,10)[c]{\textrm{ }$2$ } }
\put(30,30){\framebox(10,10)[c]{\textrm{ }$7$ } }

\put(170,90){\framebox(10,10)[c]{\textrm{ }$1$ }}
\put(170,80){\framebox(10,10)[c]{\textrm{ }$2$ } }
\put(170,70){\framebox(10,10)[c]{\textrm{ }$3$ } }
\put(170,60){\framebox(10,10)[c]{\textrm{ }$4$ } }
\put(170,50){\framebox(10,10)[c]{\textrm{ }$5$ } }

\put(160,70){\framebox(10,10)[c]{\textrm{ }$1$ } }
\put(160,60){\framebox(10,10)[c]{\textrm{ }$2$ } }
\put(160,50){\framebox(10,10)[c]{\textrm{ }$3$ } }

\put(150,50){\framebox(10,10)[c]{\textrm{ }$1$ } }

\put(130,40){\framebox(10,10)[c]{\textrm{ }$1$ } }
\put(140,40){\framebox(10,10)[c]{\textrm{ }$1$ } }
\put(150,40){\framebox(10,10)[c]{\textrm{ }$6$ } }

\put(130,30){\framebox(10,10)[c]{\textrm{ }$2$ } }
\put(140,30){\framebox(10,10)[c]{\textrm{ }$7$ } }

\end{picture}

\end{example}

\section{Fat Staircases with Hook Foundations}

Recall from the introduction, that we write $D_1 \succeq_s D_2$ whenever $s_{D_1}-s_{D_2} \geq_s 0$. 
If we consider the relation $\succeq_s$ on the set of all Schur-equivalent classes of diagrams (i.e. $[D]_s= \{D' | s_{D}=s_{D'}\}$), then $\succeq_s$ defines a partial ordering.
This allows us to view the Hasse diagram for the relation $\succeq_s$ on the set of these Schur-equivalent classes.
For the sake of convenience, we write $D$ in place of $[D]_s$.

\begin{example}

Here we show the Hasse diagram for $\succeq_s$ on the collection of staircases with bad foundations $\mathcal{S}(\lambda,(1^7);0)$, for $\lambda$ varying over all hooks of size $7$.
A line drawn from a diagram $D_1$ to a diagram $D_2$ in an upwards direction indicates that $s_{D_1} - s_{D_2} \geq_s 0$.
We note that the diagrams along the top are all Schur-incomparable. That is, they form an anti-chain with regards to $\succeq_s$.
Also, the diagrams along the right are all comparable. That is, they form a chain with regards to $\succeq_s$.

\setlength{\unitlength}{0.20mm}

\begin{picture}(00,500)(-100,-400)

\put(42,-80){\line(1,-1){263}}
\put(130,-80){\line(1,-1){165}}
\put(130,-80){\line(2,-3){176}}
\put(240,-80){\line(1,-1){60}}
\put(240,-80){\line(1,-3){55}}
\put(240,-80){\line(1,-4){66}}

\put(345,-320){\line(0,1){30}}
\put(345,-210){\line(0,1){30}}
\put(345,-90){\line(0,1){30}}

\put(310,-390){\framebox(10,10)[tl]{ }}
\put(320,-390){\framebox(10,10)[tl]{ }}
\put(330,-390){\framebox(10,10)[tl]{ }}
\put(340,-390){\framebox(10,10)[tl]{ }}
\put(350,-390){\framebox(10,10)[tl]{ }}
\put(360,-390){\framebox(10,10)[tl]{ }}
\put(370,-390){\framebox(10,10)[tl]{ }}

\put(310,-380){\framebox(10,10)[tl]{ }}
\put(320,-380){\framebox(10,10)[tl]{ }}
\put(330,-380){\framebox(10,10)[tl]{ }}
\put(340,-380){\framebox(10,10)[tl]{ }}
\put(350,-380){\framebox(10,10)[tl]{ }}
\put(360,-380){\framebox(10,10)[tl]{ }}
\put(370,-380){\framebox(10,10)[tl]{ }}

\put(320,-370){\framebox(10,10)[tl]{ }}
\put(330,-370){\framebox(10,10)[tl]{ }}
\put(340,-370){\framebox(10,10)[tl]{ }}
\put(350,-370){\framebox(10,10)[tl]{ }}
\put(360,-370){\framebox(10,10)[tl]{ }}
\put(370,-370){\framebox(10,10)[tl]{ }}

\put(330,-360){\framebox(10,10)[tl]{ }}
\put(340,-360){\framebox(10,10)[tl]{ }}
\put(350,-360){\framebox(10,10)[tl]{ }}
\put(360,-360){\framebox(10,10)[tl]{ }}
\put(370,-360){\framebox(10,10)[tl]{ }}
     
\put(340,-350){\framebox(10,10)[tl]{ }}
\put(350,-350){\framebox(10,10)[tl]{ }}
\put(360,-350){\framebox(10,10)[tl]{ }}
\put(370,-350){\framebox(10,10)[tl]{ }}
 
\put(350,-340){\framebox(10,10)[tl]{ }}
\put(360,-340){\framebox(10,10)[tl]{ }}
\put(370,-340){\framebox(10,10)[tl]{ }}

\put(360,-330){\framebox(10,10)[tl]{ }}
\put(370,-330){\framebox(10,10)[tl]{ }}

\put(370,-320){\framebox(10,10)[tl]{ }}

\put(310,-290){\framebox(10,10)[tl]{ }}
\put(310,-280){\framebox(10,10)[tl]{ }}
\put(320,-280){\framebox(10,10)[tl]{ }}
\put(330,-280){\framebox(10,10)[tl]{ }}
\put(340,-280){\framebox(10,10)[tl]{ }}
\put(350,-280){\framebox(10,10)[tl]{ }}
\put(360,-280){\framebox(10,10)[tl]{ }}

\put(310,-270){\framebox(10,10)[tl]{ }}
\put(320,-270){\framebox(10,10)[tl]{ }}
\put(330,-270){\framebox(10,10)[tl]{ }}
\put(340,-270){\framebox(10,10)[tl]{ }}
\put(350,-270){\framebox(10,10)[tl]{ }}
\put(360,-270){\framebox(10,10)[tl]{ }}
\put(370,-270){\framebox(10,10)[tl]{ }}

\put(320,-260){\framebox(10,10)[tl]{ }}
\put(330,-260){\framebox(10,10)[tl]{ }}
\put(340,-260){\framebox(10,10)[tl]{ }}
\put(350,-260){\framebox(10,10)[tl]{ }}
\put(360,-260){\framebox(10,10)[tl]{ }}
\put(370,-260){\framebox(10,10)[tl]{ }}

\put(330,-250){\framebox(10,10)[tl]{ }}
\put(340,-250){\framebox(10,10)[tl]{ }}
\put(350,-250){\framebox(10,10)[tl]{ }}
\put(360,-250){\framebox(10,10)[tl]{ }}
\put(370,-250){\framebox(10,10)[tl]{ }}
     
\put(340,-240){\framebox(10,10)[tl]{ }}
\put(350,-240){\framebox(10,10)[tl]{ }}
\put(360,-240){\framebox(10,10)[tl]{ }}
\put(370,-240){\framebox(10,10)[tl]{ }}
 
\put(350,-230){\framebox(10,10)[tl]{ }}
\put(360,-230){\framebox(10,10)[tl]{ }}
\put(370,-230){\framebox(10,10)[tl]{ }}

\put(360,-220){\framebox(10,10)[tl]{ }}
\put(370,-220){\framebox(10,10)[tl]{ }}

\put(370,-210){\framebox(10,10)[tl]{ }}

\put(310,-180){\framebox(10,10)[tl]{ }}
\put(310,-170){\framebox(10,10)[tl]{ }}
\put(310,-160){\framebox(10,10)[tl]{ }}
\put(320,-160){\framebox(10,10)[tl]{ }}
\put(330,-160){\framebox(10,10)[tl]{ }}
\put(340,-160){\framebox(10,10)[tl]{ }}
\put(350,-160){\framebox(10,10)[tl]{ }}

\put(310,-150){\framebox(10,10)[tl]{ }}
\put(320,-150){\framebox(10,10)[tl]{ }}
\put(330,-150){\framebox(10,10)[tl]{ }}
\put(340,-150){\framebox(10,10)[tl]{ }}
\put(350,-150){\framebox(10,10)[tl]{ }}
\put(360,-150){\framebox(10,10)[tl]{ }}
\put(370,-150){\framebox(10,10)[tl]{ }}

\put(320,-140){\framebox(10,10)[tl]{ }}
\put(330,-140){\framebox(10,10)[tl]{ }}
\put(340,-140){\framebox(10,10)[tl]{ }}
\put(350,-140){\framebox(10,10)[tl]{ }}
\put(360,-140){\framebox(10,10)[tl]{ }}
\put(370,-140){\framebox(10,10)[tl]{ }}

\put(330,-130){\framebox(10,10)[tl]{ }}
\put(340,-130){\framebox(10,10)[tl]{ }}
\put(350,-130){\framebox(10,10)[tl]{ }}
\put(360,-130){\framebox(10,10)[tl]{ }}
\put(370,-130){\framebox(10,10)[tl]{ }}
     
\put(340,-120){\framebox(10,10)[tl]{ }}
\put(350,-120){\framebox(10,10)[tl]{ }}
\put(360,-120){\framebox(10,10)[tl]{ }}
\put(370,-120){\framebox(10,10)[tl]{ }}
 
\put(350,-110){\framebox(10,10)[tl]{ }}
\put(360,-110){\framebox(10,10)[tl]{ }}
\put(370,-110){\framebox(10,10)[tl]{ }}

\put(360,-100){\framebox(10,10)[tl]{ }}
\put(370,-100){\framebox(10,10)[tl]{ }}

\put(370,-90){\framebox(10,10)[tl]{ }}

\put(310,-50){\framebox(10,10)[tl]{ }}
\put(310,-40){\framebox(10,10)[tl]{ }}
\put(310,-30){\framebox(10,10)[tl]{ }}
\put(310,-20){\framebox(10,10)[tl]{ }}
\put(320,-20){\framebox(10,10)[tl]{ }}
\put(330,-20){\framebox(10,10)[tl]{ }}
\put(340,-20){\framebox(10,10)[tl]{ }}

\put(310,-10){\framebox(10,10)[tl]{ }}
\put(320,-10){\framebox(10,10)[tl]{ }}
\put(330,-10){\framebox(10,10)[tl]{ }}
\put(340,-10){\framebox(10,10)[tl]{ }}
\put(350,-10){\framebox(10,10)[tl]{ }}
\put(360,-10){\framebox(10,10)[tl]{ }}
\put(370,-10){\framebox(10,10)[tl]{ }}

\put(320,0){\framebox(10,10)[tl]{ }}
\put(330,0){\framebox(10,10)[tl]{ }}
\put(340,0){\framebox(10,10)[tl]{ }}
\put(350,0){\framebox(10,10)[tl]{ }}
\put(360,0){\framebox(10,10)[tl]{ }}
\put(370,0){\framebox(10,10)[tl]{ }}

\put(330,10){\framebox(10,10)[tl]{ }}
\put(340,10){\framebox(10,10)[tl]{ }}
\put(350,10){\framebox(10,10)[tl]{ }}
\put(360,10){\framebox(10,10)[tl]{ }}
\put(370,10){\framebox(10,10)[tl]{ }}
     
\put(340,20){\framebox(10,10)[tl]{ }}
\put(350,20){\framebox(10,10)[tl]{ }}
\put(360,20){\framebox(10,10)[tl]{ }}
\put(370,20){\framebox(10,10)[tl]{ }}
 
\put(350,30){\framebox(10,10)[tl]{ }}
\put(360,30){\framebox(10,10)[tl]{ }}
\put(370,30){\framebox(10,10)[tl]{ }}

\put(360,40){\framebox(10,10)[tl]{ }}
\put(370,40){\framebox(10,10)[tl]{ }}

\put(370,50){\framebox(10,10)[tl]{ }}

\put(200,-60){\framebox(10,10)[tl]{ }}
\put(200,-50){\framebox(10,10)[tl]{ }}
\put(200,-40){\framebox(10,10)[tl]{ }}
\put(200,-30){\framebox(10,10)[tl]{ }}
\put(200,-20){\framebox(10,10)[tl]{ }}
\put(210,-20){\framebox(10,10)[tl]{ }}
\put(220,-20){\framebox(10,10)[tl]{ }}

\put(200,-10){\framebox(10,10)[tl]{ }}
\put(210,-10){\framebox(10,10)[tl]{ }}
\put(220,-10){\framebox(10,10)[tl]{ }}
\put(230,-10){\framebox(10,10)[tl]{ }}
\put(240,-10){\framebox(10,10)[tl]{ }}
\put(250,-10){\framebox(10,10)[tl]{ }}
\put(260,-10){\framebox(10,10)[tl]{ }}

\put(210,0){\framebox(10,10)[tl]{ }}
\put(220,0){\framebox(10,10)[tl]{ }}
\put(230,0){\framebox(10,10)[tl]{ }}
\put(240,0){\framebox(10,10)[tl]{ }}
\put(250,0){\framebox(10,10)[tl]{ }}
\put(260,0){\framebox(10,10)[tl]{ }}

\put(220,10){\framebox(10,10)[tl]{ }}
\put(230,10){\framebox(10,10)[tl]{ }}
\put(240,10){\framebox(10,10)[tl]{ }}
\put(250,10){\framebox(10,10)[tl]{ }}
\put(260,10){\framebox(10,10)[tl]{ }}
     
\put(230,20){\framebox(10,10)[tl]{ }}
\put(240,20){\framebox(10,10)[tl]{ }}
\put(250,20){\framebox(10,10)[tl]{ }}
\put(260,20){\framebox(10,10)[tl]{ }}
 
\put(240,30){\framebox(10,10)[tl]{ }}
\put(250,30){\framebox(10,10)[tl]{ }}
\put(260,30){\framebox(10,10)[tl]{ }}

\put(250,40){\framebox(10,10)[tl]{ }}
\put(260,40){\framebox(10,10)[tl]{ }}

\put(260,50){\framebox(10,10)[tl]{ }}

\put(90,-70){\framebox(10,10)[tl]{ }}
\put(90,-60){\framebox(10,10)[tl]{ }}
\put(90,-50){\framebox(10,10)[tl]{ }}
\put(90,-40){\framebox(10,10)[tl]{ }}
\put(90,-30){\framebox(10,10)[tl]{ }}
\put(90,-20){\framebox(10,10)[tl]{ }}
\put(100,-20){\framebox(10,10)[tl]{ }}

\put(90,-10){\framebox(10,10)[tl]{ }}
\put(100,-10){\framebox(10,10)[tl]{ }}
\put(110,-10){\framebox(10,10)[tl]{ }}
\put(120,-10){\framebox(10,10)[tl]{ }}
\put(130,-10){\framebox(10,10)[tl]{ }}
\put(140,-10){\framebox(10,10)[tl]{ }}
\put(150,-10){\framebox(10,10)[tl]{ }}

\put(100,0){\framebox(10,10)[tl]{ }}
\put(110,0){\framebox(10,10)[tl]{ }}
\put(120,0){\framebox(10,10)[tl]{ }}
\put(130,0){\framebox(10,10)[tl]{ }}
\put(140,0){\framebox(10,10)[tl]{ }}
\put(150,0){\framebox(10,10)[tl]{ }}

\put(110,10){\framebox(10,10)[tl]{ }}
\put(120,10){\framebox(10,10)[tl]{ }}
\put(130,10){\framebox(10,10)[tl]{ }}
\put(140,10){\framebox(10,10)[tl]{ }}
\put(150,10){\framebox(10,10)[tl]{ }}
     
\put(120,20){\framebox(10,10)[tl]{ }}
\put(130,20){\framebox(10,10)[tl]{ }}
\put(140,20){\framebox(10,10)[tl]{ }}
\put(150,20){\framebox(10,10)[tl]{ }}
 
\put(130,30){\framebox(10,10)[tl]{ }}
\put(140,30){\framebox(10,10)[tl]{ }}
\put(150,30){\framebox(10,10)[tl]{ }}

\put(140,40){\framebox(10,10)[tl]{ }}
\put(150,40){\framebox(10,10)[tl]{ }}

\put(150,50){\framebox(10,10)[tl]{ }}

\put(-20,-70){\framebox(10,10)[tl]{ }}
\put(-20,-60){\framebox(10,10)[tl]{ }}
\put(-20,-50){\framebox(10,10)[tl]{ }}
\put(-20,-40){\framebox(10,10)[tl]{ }}
\put(-20,-30){\framebox(10,10)[tl]{ }}
\put(-20,-20){\framebox(10,10)[tl]{ }}
\put(-20,-80){\framebox(10,10)[tl]{ }}

\put(-20,-10){\framebox(10,10)[tl]{ }}
\put(-10,-10){\framebox(10,10)[tl]{ }}
\put(0,-10){\framebox(10,10)[tl]{ }}
\put(10,-10){\framebox(10,10)[tl]{ }}
\put(20,-10){\framebox(10,10)[tl]{ }}
\put(30,-10){\framebox(10,10)[tl]{ }}
\put(40,-10){\framebox(10,10)[tl]{ }}

\put(-10,0){\framebox(10,10)[tl]{ }}
\put(0,0){\framebox(10,10)[tl]{ }}
\put(10,0){\framebox(10,10)[tl]{ }}
\put(20,0){\framebox(10,10)[tl]{ }}
\put(30,0){\framebox(10,10)[tl]{ }}
\put(40,0){\framebox(10,10)[tl]{ }}

\put(0,10){\framebox(10,10)[tl]{ }}
\put(10,10){\framebox(10,10)[tl]{ }}
\put(20,10){\framebox(10,10)[tl]{ }}
\put(30,10){\framebox(10,10)[tl]{ }}
\put(40,10){\framebox(10,10)[tl]{ }}
     
\put(10,20){\framebox(10,10)[tl]{ }}
\put(20,20){\framebox(10,10)[tl]{ }}
\put(30,20){\framebox(10,10)[tl]{ }}
\put(40,20){\framebox(10,10)[tl]{ }}
 
\put(20,30){\framebox(10,10)[tl]{ }}
\put(30,30){\framebox(10,10)[tl]{ }}
\put(40,30){\framebox(10,10)[tl]{ }}

\put(30,40){\framebox(10,10)[tl]{ }}
\put(40,40){\framebox(10,10)[tl]{ }}

\put(40,50){\framebox(10,10)[tl]{ }}

\end{picture}

\end{example}

\

We shall summarize all the $\succeq_s$ relationships between diagrams of the form $\mathcal{S}(\lambda,\alpha;k)$ when $\lambda$ is a hook of fixed size $h \leq n+k$ and $0 \leq k \leq h$.
The restriction $h \leq n+k$ is needed to guarantee that $\mathcal{S}(\lambda,\alpha;k)$ is a skew diagram for every hook $\lambda$ of size $h$.
We impose the restriction $k \leq h$ since, for $k \geq h$, every diagram of the form $\mathcal{S}(\lambda,\alpha;k)$, when $\lambda$ is a hook of fixed size $h$, is disconnected. 
Thus, for each $k \geq h$, the skew Schur function of these diagrams factor as 
\[ s_{\mathcal{S}(\lambda,\alpha;k)} = s_{\lambda} s_{\Delta_\alpha}. \]
In particular
\[s_{\mathcal{S}(\lambda,\alpha;k)} = s_{\mathcal{S}(\lambda,\alpha;h)} \]
for all $k \geq h$, so there is no change among the differences for $k \geq h$.
Furthermore, we shall see that for $k \geq h$, none of the differences is Schur-positive.

First we give one more example. This time with $k$ varying from $0$ to $h$.

\begin{example} For each $0 \leq k \leq 6$ we show the Hasse diagrams for $\succeq_s$ on the collection of staircases with bad foundations $\mathcal{S}(\lambda,\alpha;k)$ for some $\alpha=(\alpha_1,\ldots,\alpha_n)$ where $n\geq 6$, and $\lambda$ varying over all hooks of size $h=6$.

\setlength{\unitlength}{0.19mm}
\begin{picture}(100,60)(-120,0)

\put(-150,-400){\framebox(650,440)[tl]{ }}

\put(-100,-180){$k=0,1$}

\put(42,-80){\line(1,-1){263}}
\put(130,-80){\line(1,-1){165}}
\put(130,-80){\line(2,-3){176}}
\put(240,-80){\line(1,-1){60}}
\put(240,-80){\line(1,-3){55}}
\put(240,-80){\line(1,-4){66}}

\put(345,-320){\line(0,1){30}}
\put(345,-210){\line(0,1){30}}

\put(0,0){$s_{\mathcal{S}(\hooka \textrm{ } \textrm{ },\alpha;k) }$}

\put(100,0){$s_{\mathcal{S}(\hookb \textrm{ } \textrm{ } \textrm{ }  \textrm{ },\alpha;k) }$}

\put(200,0){$s_{\mathcal{S}(\hookc \textrm{ } \textrm{ }  \textrm{ } \textrm{ } \textrm{ },\alpha;k) }$}

\put(320,-130){$s_{\mathcal{S}(\hookd \textrm{ } \textrm{ } \textrm{ } \textrm{ }  \textrm{ } \textrm{ } \textrm{ },\alpha;k) }$}

\put(320,-250){$s_{\mathcal{S}(\hooke  \textrm{ } \textrm{ } \textrm{ } \textrm{ } \textrm{ } \textrm{ }  \textrm{ } \textrm{ } \textrm{ },\alpha;k) }$}

\put(320,-360){$s_{\mathcal{S}(\hookf   \textrm{ } \textrm{ } \textrm{ }\textrm{ } \textrm{ } \textrm{ } \textrm{ }  \textrm{ } \textrm{ } \textrm{ },\alpha;k) }$}

\end{picture}

\

\

\

\

\

\

\

\

\

\

\

\
\

\

\

\

\

\

\

\

\

\newpage

\setlength{\unitlength}{0.20mm}
\begin{picture}(100,60)(-120,0)

\put(-100,-180){$k=2$}

\put(-150,-400){\framebox(650,440)[tl]{ }}

\put(130,-80){\line(2,-3){176}}
\put(240,-80){\line(1,-3){55}}
\put(240,-80){\line(1,-4){66}}

\put(345,-320){\line(0,1){30}}
\put(345,-210){\line(0,1){30}}

\put(0,0){$s_{\mathcal{S}(\hooka \textrm{ } \textrm{ },\alpha;k) }$}

\put(100,0){$s_{\mathcal{S}(\hookb \textrm{ }\textrm{ } \textrm{ }  \textrm{ },\alpha;k) }$}

\put(200,0){$s_{\mathcal{S}(\hookc \textrm{ } \textrm{ }  \textrm{ } \textrm{ } \textrm{ },\alpha;k) }$}

\put(320,-130){$s_{\mathcal{S}(\hookd \textrm{ } \textrm{ } \textrm{ } \textrm{ }  \textrm{ } \textrm{ } \textrm{ },\alpha;k) }$}

\put(320,-250){$s_{\mathcal{S}(\hooke  \textrm{ } \textrm{ }\textrm{ } \textrm{ } \textrm{ } \textrm{ }  \textrm{ } \textrm{ } \textrm{ },\alpha;k) }$}

\put(320,-360){$s_{\mathcal{S}(\hookf   \textrm{ } \textrm{ }\textrm{ } \textrm{ } \textrm{ } \textrm{ } \textrm{ }  \textrm{ } \textrm{ } \textrm{ },\alpha;k) }$}

\end{picture}

\

\

\

\

\

\

\

\

\

\

\

\
\

\

\

\

\

\

\

\

\

\setlength{\unitlength}{0.20mm}
\begin{picture}(100,60)(-120,0)

\put(-100,-180){$k=3$}

\put(-150,-400){\framebox(650,440)[tl]{ }}

\put(240,-80){\line(1,-4){66}}

\put(345,-320){\line(0,1){30}}
\put(345,-210){\line(0,1){30}}

\put(0,0){$s_{\mathcal{S}(\hooka \textrm{ } \textrm{ },\alpha;k) }$}

\put(100,0){$s_{\mathcal{S}(\hookb \textrm{ }\textrm{ } \textrm{ }  \textrm{ },\alpha;k) }$}

\put(200,0){$s_{\mathcal{S}(\hookc \textrm{ } \textrm{ }  \textrm{ } \textrm{ } \textrm{ },\alpha;k) }$}

\put(320,-130){$s_{\mathcal{S}(\hookd \textrm{ } \textrm{ } \textrm{ } \textrm{ }  \textrm{ } \textrm{ } \textrm{ },\alpha;k) }$}

\put(320,-250){$s_{\mathcal{S}(\hooke  \textrm{ } \textrm{ } \textrm{ }\textrm{ } \textrm{ } \textrm{ }  \textrm{ } \textrm{ } \textrm{ },\alpha;k) }$}

\put(320,-360){$s_{\mathcal{S}(\hookf   \textrm{ } \textrm{ } \textrm{ }\textrm{ } \textrm{ } \textrm{ } \textrm{ }  \textrm{ } \textrm{ } \textrm{ },\alpha;k) }$}

\end{picture}

\

\

\

\

\

\
\

\

\

\

\

\

\

\

\

\

\

\

\

\

\
\newpage

\setlength{\unitlength}{0.20mm}
\begin{picture}(100,60)(-120,0)

\put(-100,-180){$k=4$}

\put(-150,-400){\framebox(650,440)[tl]{ }}

\put(375,-330){\line(-1,2){20}}
\put(375,-330){\line(1,6){24}}

\put(0,0){$s_{\mathcal{S}(\hooka \textrm{ } \textrm{ },\alpha;k) }$}

\put(100,0){$s_{\mathcal{S}(\hookb \textrm{ } \textrm{ }\textrm{ }  \textrm{ },\alpha;k) }$}

\put(200,0){$s_{\mathcal{S}(\hookc \textrm{ } \textrm{ }  \textrm{ } \textrm{ } \textrm{ },\alpha;k) }$}

\put(380,-130){$s_{\mathcal{S}(\hookd \textrm{ } \textrm{ } \textrm{ } \textrm{ }  \textrm{ } \textrm{ } \textrm{ },\alpha;k) }$}

\put(250,-250){$s_{\mathcal{S}(\hooke  \textrm{ } \textrm{ }\textrm{ } \textrm{ } \textrm{ } \textrm{ }  \textrm{ } \textrm{ } \textrm{ },\alpha;k) }$}

\put(320,-360){$s_{\mathcal{S}(\hookf   \textrm{ } \textrm{ }\textrm{ } \textrm{ } \textrm{ } \textrm{ } \textrm{ }  \textrm{ } \textrm{ } \textrm{ },\alpha;k) }$}

\end{picture}

\

\

\

\

\

\

\

\

\

\

\

\
\

\

\

\

\

\

\

\

\

\setlength{\unitlength}{0.20mm}
\begin{picture}(100,60)(-120,0)

\put(-100,-180){$k=5$}

\put(-150,-400){\framebox(650,440)[tl]{ }}

\put(345,-320){\line(0,1){30}}

\put(0,0){$s_{\mathcal{S}(\hooka \textrm{ } \textrm{ },\alpha;k) }$}

\put(100,0){$s_{\mathcal{S}(\hookb \textrm{ }\textrm{ } \textrm{ }  \textrm{ },\alpha;k) }$}

\put(200,0){$s_{\mathcal{S}(\hookc \textrm{ } \textrm{ }  \textrm{ } \textrm{ } \textrm{ },\alpha;k) }$}

\put(320,-130){$s_{\mathcal{S}(\hookd \textrm{ } \textrm{ } \textrm{ } \textrm{ }  \textrm{ } \textrm{ } \textrm{ },\alpha;k) }$}

\put(320,-250){$s_{\mathcal{S}(\hooke  \textrm{ }\textrm{ } \textrm{ } \textrm{ } \textrm{ } \textrm{ }  \textrm{ } \textrm{ } \textrm{ },\alpha;k) }$}

\put(320,-360){$s_{\mathcal{S}(\hookf   \textrm{ }\textrm{ } \textrm{ } \textrm{ } \textrm{ } \textrm{ } \textrm{ }  \textrm{ } \textrm{ } \textrm{ },\alpha;k) }$}

\end{picture}

\

\

\

\

\
\

\

\

\

\

\

\

\

\

\

\

\

\

\

\

\

\newpage

\setlength{\unitlength}{0.20mm}
\begin{picture}(100,60)(-120,0)

\put(-100,-180){$k \geq 6$}

\put(-150,-400){\framebox(650,440)[tl]{ }}

\put(0,0){$s_{\mathcal{S}(\hooka \textrm{ } \textrm{ },\alpha;k) }$}

\put(100,0){$s_{\mathcal{S}(\hookb \textrm{ }\textrm{ } \textrm{ }  \textrm{ },\alpha;k) }$}

\put(200,0){$s_{\mathcal{S}(\hookc \textrm{ } \textrm{ }  \textrm{ } \textrm{ } \textrm{ },\alpha;k) }$}

\put(320,-130){$s_{\mathcal{S}(\hookd \textrm{ } \textrm{ } \textrm{ } \textrm{ }  \textrm{ } \textrm{ } \textrm{ },\alpha;k) }$}

\put(320,-250){$s_{\mathcal{S}(\hooke  \textrm{ }\textrm{ } \textrm{ } \textrm{ } \textrm{ } \textrm{ }  \textrm{ } \textrm{ } \textrm{ },\alpha;k) }$}

\put(320,-360){$s_{\mathcal{S}(\hookf   \textrm{ }\textrm{ } \textrm{ } \textrm{ } \textrm{ } \textrm{ } \textrm{ }  \textrm{ } \textrm{ } \textrm{ },\alpha;k) }$}

\end{picture}

\

\

\

\

\

\

\

\

\

\

\

\

\

\

\

\

\

\

\

\

\

For $0 \leq k \leq 1$ we have the same structure of the Hasse diagrams that was displayed in the first example.  
As $k$ increases, fewer of the $\succeq_s$ relations remain satisfied, until finally, when $k \geq 6$, there are no Schur-positive differences among these diagrams. 
We note that the chain that was apparent among the diagrams on the right when $k=0$ also lost its structure as $k$ increased. 

\end{example}

\

When working with hooks, we shall find it convenient to describe each hook by its \textit{arm length} and \textit{leg length}.
Hence, we let $\mu$ be the hook $(\mu_a,1^{\mu_l - 1})$ and $\lambda$ be the hook $(\lambda_a,1^{\lambda_l - 1})$. 
Throughout this section we shall use a fixed fat staircase $\Delta_\alpha$, where $\alpha = (\alpha_1,\alpha_2,\ldots,\alpha_n)$. 
Thus $n$ is the width of the fat staircase.

The following results summarize all the $\succeq_s$ relationships between diagrams of the form $\mathcal{S}(\lambda,\alpha;k)$ when $\lambda$ is a hook of fixed size $h \leq n+k$ and $0 \leq k \leq h$.

For each pair of hooks $\lambda$, $\mu$ with $\lambda_a, \mu_a \leq \longlroof \frac{h}{2} \longrroof$, Theorem~\ref{kantichainhooks1} and Theorem~\ref{kantichainhooks11} each prove one side of the Schur-incomparability of this pair, thus describing the antichain structure displayed along the top of the Hasse diagrams in the previous examples. 

For each pair of hooks $\lambda$, $\mu$ with $\longlroof \frac{h}{2} \longrroof \leq \lambda_a < \mu_a$, Theorem~\ref{kchainhooks} and Theorem~\ref{kchainhooksb} shows that $\mathcal{S}(\lambda,\alpha;k) \succeq_s \mathcal{S}(\mu,\alpha;k)$ if and only if $\lambda_a \geq \mu_l+k-1$ . % for any fat staircase $\Delta_\alpha$. 
This describes relations among those diagrams displayed along the right of the Hasse diagrams in the previous examples. 

Finally, for each pair of hooks $\lambda$, $\mu$ with $\lambda_a, \mu_l < \longlroof \frac{h}{2} \longrroof$, 
Theorem~\ref{kcrosshooks1} and Theorem~\ref{kcrosshooks11} shows that when $1 \leq k \leq h$ we have $\lambda_a \geq \mu_l+k-1$ if and only if $\mathcal{S}(\lambda, \alpha;k) \succeq_s \mathcal{S}(\mu, \alpha;k)$, and when $k=0$ we have $\lambda_a \geq \mu_l$ if and only if $\mathcal{S}(\lambda, \alpha;k) \succeq_s \mathcal{S}(\mu, \alpha;k)$. %, and Theorem~\ref{crosshooks11} describes the ``only if" direction of this statement.
This describes the relationships between the diagrams displayed on the right with the diagrams displayed along the top in the previous Hasse diagrams.

\

Let us finally begin. We start by looking at the antichain structure.

\begin{theorem}
\label{kantichainhooks1}
Let $\lambda$ and $\mu$ be distinct hooks with $|\lambda|=|\mu| =h \leq n+k$ and $\lambda_a < \mu_a \leq \longlroof \frac{h}{2} \longrroof$, and let $0 \leq k \leq h$. 
Then $\mathcal{S}(\mu, \alpha;k) \not\succeq_s  \mathcal{S}(\lambda, \alpha;k)$.
\end{theorem}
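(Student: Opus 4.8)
The plan is to compare the two skew Schur functions coefficient-by-coefficient and to exhibit a single partition $\nu$ at which the coefficient of $s_{\mathcal{S}(\lambda,\alpha;k)}$ strictly exceeds that of $s_{\mathcal{S}(\mu,\alpha;k)}$; this alone shows that $s_{\mathcal{S}(\mu,\alpha;k)} - s_{\mathcal{S}(\lambda,\alpha;k)}$ is not Schur-positive, which is exactly the claim. The first step is to record a clean description of these coefficients. Combining Lemma~\ref{kfatfirstrowlemma} and Lemma~\ref{kfatjoinlemma} yields a content-preserving bijection between the LR tableaux of $\mathcal{S}(\lambda,\alpha;k)$ and the LR tableaux of $\lambda \oplus \Delta_\alpha$ having at most $k$ ones in the first row of $\lambda$. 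By Theorem~\ref{rotate} and Theorem~\ref{disjprod} the coefficient of $s_\nu$ in $s_{\lambda \oplus \Delta_\alpha} = s_\lambda s_{\delta_\alpha}$ is $c_{\delta_\alpha \lambda}^{\nu}$, while the unique filling of $\Delta_\alpha$ contributes exactly one $1$ per column, i.e.\ $n$ ones in all. Since $\lambda$ is a hook, every $1$ in the foundation lies in its first row, so ``at most $k$ ones in the first row of $\lambda$'' is precisely the condition $\nu_1 \le n + k$ on $\nu$ alone. As all tableaux of a fixed content $\nu$ share the value $\nu_1$, this restriction is all-or-nothing for each $\nu$, and I obtain the truncation formula
\[ [s_\nu]\, s_{\mathcal{S}(\lambda,\alpha;k)} = \left\{ \begin{array}{cll} c_{\delta_\alpha \lambda}^{\nu} & \mbox{if} & \nu_1 \le n+k,\\ 0 & \mbox{if} & \nu_1 > n+k,\end{array}\right. \]
and similarly for $\mu$.

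It therefore suffices to produce a partition $\nu$ with $\nu_1 \le n+k$ and $c_{\delta_\alpha \lambda}^{\nu} > c_{\delta_\alpha \mu}^{\nu}$. I would exploit that $\lambda_a < \mu_a$ forces the legs to satisfy $\lambda_l = h - \lambda_a + 1 > h - \mu_a + 1 = \mu_l$, and use length as the distinguishing statistic. For any hook $\eta$, the first column of a filling of $\nu/\delta_\alpha$ of content $\eta$ is strictly increasing with entries in $\{1,\dots,\eta_l\}$, so $c_{\delta_\alpha \eta}^{\nu} \ne 0$ forces $l(\nu) \le |\alpha| + \eta_l$. Hence I would choose $\nu$ with $l(\nu) = |\alpha| + \lambda_l$; then $l(\nu) = |\alpha| + \lambda_l > |\alpha| + \mu_l$ gives $c_{\delta_\alpha \mu}^{\nu} = 0$ at once.

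To realize such a $\nu$ with $c_{\delta_\alpha \lambda}^{\nu} \ge 1$, I would fill $\nu/\delta_\alpha$ explicitly: place $1, 2, \dots, \lambda_l$ down the first column in the new rows $|\alpha|+1, \dots, |\alpha|+\lambda_l$ (a ``$1$ atop the leg'' vertical strip, accounting for one $1$ together with the entire leg $2, \dots, \lambda_l$), and place the remaining $\lambda_a - 1$ ones as a horizontal strip extending rows of $\delta_\alpha$. One checks directly that this filling is semistandard, that its reading word is lattice, and that the resulting $\nu$ is a partition of length $|\alpha|+\lambda_l$ and content $\lambda$. Because the horizontal strip can be laid with up to $n-1$ boxes in rows below the first and up to $k$ boxes in the first row, and because $\lambda_a \le \lceil h/2 \rceil \le h \le n+k$, the $\lambda_a - 1$ ones fit while keeping $\nu_1 \le n+k$. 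The truncation formula then gives $[s_\nu]\,s_{\mathcal{S}(\lambda,\alpha;k)} \ge 1 > 0 = [s_\nu]\,s_{\mathcal{S}(\mu,\alpha;k)}$, so the difference is not Schur-positive and $\mathcal{S}(\mu,\alpha;k) \not\succeq_s \mathcal{S}(\lambda,\alpha;k)$.

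The step I expect to demand the most care is the truncation formula: its validity rests on the fact that $\nu_1$ is constant across all tableaux of a given content $\nu$, so that the lemmas' ``$\le k$ ones'' restriction collapses to the single inequality $\nu_1 \le n+k$. A secondary bookkeeping point is confirming that the horizontal strip of $\lambda_a - 1$ ones genuinely sits as a horizontal strip (distinct columns, partition shape preserved) while respecting $\nu_1 \le n+k$; here the hypotheses $h \le n+k$ and $\lambda_a \le \lceil h/2 \rceil$ are exactly what guarantee the available room.
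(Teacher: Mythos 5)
Your proposal is correct and follows essentially the same route as the paper: both arguments exhibit a content $\nu$ of length $|\alpha|+\lambda_l$ that is attainable by an LR filling of $\mathcal{S}(\lambda,\alpha;k)$ but exceeds the number of rows $|\alpha|+\mu_l$ of $\mathcal{S}(\mu,\alpha;k)$, forcing the coefficient of $s_\nu$ to be positive for the former and zero for the latter. The only cosmetic difference is that you certify $c_{\delta_\alpha \lambda}^{\nu}\geq 1$ by filling the dual shape $\nu/\delta_\alpha$ with content $\lambda$ and invoking your truncation formula (which correctly packages Lemma~\ref{kfatfirstrowlemma} and Lemma~\ref{kfatjoinlemma}), whereas the paper fills $\lambda\oplus\Delta_\alpha$ directly and applies Lemma~\ref{kfatjoinlemma}.
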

\begin{proof}
We shall show that there exists a SSYT $\mathcal{T}$ of shape $\mathcal{S}(\lambda, \alpha;k)$ with lattice reading word such that there is no SSYT of shape $\mathcal{S}(\mu, \alpha;k)$ with lattice reading word having the same content.
This is sufficient to prove the theorem.

Since $\lambda_a < \mu_a$ and $|\lambda|=|\mu|$, we have $\lambda_l > \mu_l$.
Let $r_1=r_2= \ldots = r_k=1$ and let $r_{1+k} < r_{2+k} < \ldots < r_{n+k}$ be the values of $R_{\alpha,k}$ greater than $1$.
We can create a SSYT of shape $\lambda$ by filling the boxes of $\lambda$ as follows.

\begin{center}
\begin{tabular}{cccccccc}
$r_1$ & \ldots & $r_k$ & $r_{1+k}$ & $r_{2+k}$ & $\cdots$ & $r_{\lambda_a-1}$ & $|\alpha|+1$ \\
$|\alpha|+2$\\
$|\alpha|+3$\\
$\vdots$\\
$|\alpha|+\lambda_l$ \\
\end{tabular}
\end{center}

\noindent Using the unique filling of $\Delta_\alpha$, it is easy to check that the resulting tableau of shape $\lambda \oplus \Delta_\alpha$ has lattice reading word since each of the entries in the first row of $\lambda$ are from $R_{\alpha,k}$ and the entry $1$ appears $k$ times.
Thus Lemma~\ref{kfatjoinlemma} provides us with a SSYT $\mathcal{T}$ of shape $\mathcal{S}(\lambda, \alpha;k)$ with lattice reading word, where $\lambda$ is filled as shown above.

Since $\mu_l < \lambda_l$, we have $l(\mathcal{S}(\mu, \alpha;k))=|\alpha|+\mu_l<|\alpha|+\lambda_l$. 
Therefore no SSYT of shape $\mathcal{S}(\mu, \alpha;k)$  with lattice reading word can contain the entry $|\alpha|+\lambda_l$.
Thus no SSYT of shape $\mathcal{S}(\mu, \alpha;k)$ with lattice reading word can have the same content as $\mathcal{T}$.
Hence, it follows that $s_{\mathcal{S}(\mu,\alpha;k)}-s_{\mathcal{S}(\lambda,\alpha;k)} \not\geq_s 0$. \qed
\end{proof}

\begin{theorem}
\label{kantichainhooks11}
Let $\lambda$ and $\mu$ be distinct hooks with $|\lambda|=|\mu| =h \leq n+k$ and $\lambda_a < \mu_a \leq \longlroof \frac{h}{2} \longrroof$ and let $0 \leq k \leq h$. 
Then $\mathcal{S}(\lambda, \alpha;k) \not\succeq_s  \mathcal{S}(\mu, \alpha;k)$.
\end{theorem}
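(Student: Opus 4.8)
The plan is to follow the overall strategy of Theorem~\ref{kantichainhooks1}: I will exhibit a single SSYT $\mathcal{T}$ of shape $\mathcal{S}(\mu,\alpha;k)$ with lattice reading word whose content $\gamma$ is realized by no SSYT of shape $\mathcal{S}(\lambda,\alpha;k)$ with lattice reading word. By the Littlewood--Richardson rule (Theorem~\ref{lr}), such a $\gamma$ satisfies $[s_\gamma]\,s_{\mathcal{S}(\mu,\alpha;k)}\ge 1$ while $[s_\gamma]\,s_{\mathcal{S}(\lambda,\alpha;k)}=0$, so the coefficient of $s_\gamma$ in $s_{\mathcal{S}(\lambda,\alpha;k)}-s_{\mathcal{S}(\mu,\alpha;k)}$ is negative and the difference is not Schur-positive. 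Where Theorem~\ref{kantichainhooks1} obstructed a content by a value that needed too many rows, here I will use the dual obstruction: a content whose foundation admits too few \emph{distinct} values.

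The key principle is a count of distinct entries. In any SSYT of shape $\mathcal{S}(\nu,\alpha;k)$ the copy of $\Delta_\alpha$ is filled in the unique way described in Section~3, so the foundation entries form exactly the multiset $\gamma$ minus the fixed content of $\Delta_\alpha$. Since the first column of the foundation is strictly increasing of height $\nu_l$, this multiset must contain at least $\nu_l$ distinct values. Because $\lambda_a<\mu_a$ forces $\lambda_l>\mu_l$, it therefore suffices to build $\mathcal{T}$ on $\mathcal{S}(\mu,\alpha;k)$ whose foundation uses \emph{exactly} $\mu_l$ distinct values: the resulting content $\gamma$ then has a foundation multiset with only $\mu_l<\lambda_l$ distinct values, which no shape-$\mathcal{S}(\lambda,\alpha;k)$ tableau can realize.

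To construct such a $\mathcal{T}$ I would exploit the longer arm of $\mu$ to absorb repetition. Writing $R_{\alpha,k}=\{1<v_1<v_2<\cdots<v_n\}$ (with $1$ present only when $k>0$), I fill the arm of $\mu$ with the $\mu_a$ smallest admissible entries and then fill the leg with a strictly increasing sequence of length $\mu_l$ that reuses exactly those arm values and extends them by the fewest further values, so that the total number of distinct foundation entries is $\mu_l$; this is possible precisely because $\mu_a\le\mu_l$, which follows from $\mu_a\le\longlroof\frac{h}{2}\longrroof$. For $k=0$ the cleanest choice is arm $=v_1,\dots,v_{\mu_a}$ and leg $=v_1,\dots,v_{\mu_l}$ (valid since $\mu_l\le h\le n$). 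Lemma~\ref{kfatjoinlemma} then reduces the lattice check to verifying that appending the reading word of this filled hook to the reading word of $\Delta_\alpha$ preserves the lattice property.

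The main obstacle is exactly this lattice verification, together with the bookkeeping needed to keep the distinct-value count equal to $\mu_l$ across all parameter regimes. The verification rests on one clean fact: the column heights of $\Delta_\alpha$ are the partial sums $\sum_{i\ge j}\alpha_i$, which are strictly decreasing and hence distinct, so in the unique filling of $\Delta_\alpha$ each $R$-value satisfies (number of $v_j-1$) $=$ (number of $v_j$) $+\,1$. This makes every lattice inequality met while reading the arm and then the leg hold with equality, so $\mathcal{T}$ is lattice. Extra care is needed when $k>0$ or $\mu_l>n$: there I would place up to $k$ ones at the start of the arm and take the surplus leg entries to be small, lattice-admissible values (a short initial run) rather than new large ones, since a filler exceeding every $R$-value would introduce a value with no predecessor and destroy the lattice property. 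Once $\mathcal{T}$ is shown to be lattice with exactly $\mu_l$ distinct foundation entries, the distinct-count obstruction of the second paragraph completes the proof.
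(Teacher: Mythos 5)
Your strategy is genuinely different from the paper's, and it has a gap at exactly the point you flag as the main obstacle. The paper does not look for a content that $\mathcal{S}(\mu,\alpha;k)$ realizes and $\mathcal{S}(\lambda,\alpha;k)$ does not; it fills $\mu$ with $k$ ones and $h-k$ \emph{distinct} values of $R_{\alpha,k}$, observes that the resulting content is realized by both shapes, and then shows that the Littlewood--Richardson coefficient for the $\mu$-shape is strictly larger than for the $\lambda$-shape by comparing the two binomial coefficients that count the fillings. Your ``dual obstruction'' instead requires a lattice filling of $\mathcal{S}(\mu,\alpha;k)$ whose foundation repeats values, and this is where the argument breaks. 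The ``clean fact'' that the unique filling of $\Delta_\alpha$ gives $\#(v_j-1)=\#(v_j)+1$ supplies exactly one unit of slack for each $v_j\in R_{\alpha,k}$, and that unit is consumed by the first occurrence of $v_j$ in the arm; when the second copy of $v_j$ is read in the leg, the count of $v_j$ reaches $\#(v_j)+2$ while the count of $v_j-1$ is still $\#(v_j)+1$, violating the lattice condition --- unless the foundation also contains an extra $v_j-1$, which happens for free only when $\alpha_{n+1-j}=1$, so that $v_j-1=v_{j-1}$ already sits in the arm. Concretely, for $\alpha=(2,2,2,2,2,2)$, $k=0$, $h=6$, $\mu=(3,1,1,1)$, your filling has arm $3,5,7$ and leg $3,5,7,9$, and the reading word fails to be lattice at the second $5$ (six $5$'s read against five $4$'s). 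Your construction is correct for $\alpha=(1^n)$, but the theorem is claimed for all fat staircases.

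The defect is not merely in the particular filling: the distinct-count obstruction itself is unavailable for general $\alpha$. If $\alpha_{n+1-j}\geq 2$, then duplicating $v_j$ forces the leg to contain all of $v_{j-1}+1,\dots,v_j-1$, which lie outside $R_{\alpha,k}$ and hence cannot also be arm entries; each repeated value therefore costs at least one brand-new distinct value. When every $\alpha_i\geq 2$ and $k=0$, this gives at least $\mu_a+\frac{\mu_l-1}{2}$ distinct values in the foundation of \emph{any} lattice filling of $\mathcal{S}(\mu,\alpha;k)$, which exceeds $\lambda_l-1=\mu_a+\mu_l-1-\lambda_a$ whenever $\lambda_a>\frac{\mu_l-1}{2}$ (e.g.\ $\mu_a=\lceil h/2\rceil$, $\lambda_a=\mu_a-1$, $h$ large). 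In that regime no content of the kind you seek exists, so no amount of care in the lattice verification can rescue the plan; a multiplicity comparison of the paper's type appears to be necessary.
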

\begin{proof}
We first consider the case when $k>0$. 
We let $r_1 = r_2 = \ldots = r_k = 1$ and let $r_{1+k} < r_{2+k} < \ldots r_{n+k}$ be the values of $R_{\alpha,k}$ greater than 1, where we note $n+k \geq h$.
We can create a SSYT of shape $\mu$ by filling the boxes of $\mu$ as follows.

\begin{center}
\begin{tabular}{cccccccc}
$r_1$ & $\cdots$ & $r_k$ & $r_{1+k}$ &  $r_{2+k}$ & $\cdots$ & $r_{\mu_a}$ \\
$r_{\mu_a+1}$\\
$r_{\mu_a+2}$\\
$\vdots$\\
$r_{h}$ \\
\end{tabular}
\end{center}

\noindent Since $r_1 = r_2 = \ldots = r_k = 1$ and $r_{1+k} < r_{2+k} < \ldots < r_{h}$ are distinct values of $R_{\alpha,k}$, it is easy to check that the resulting tableau of shape $\mu \oplus \Delta_\alpha$ has lattice reading word. % since each of the entries in the first row of $\lambda$ are from $R_{\alpha,k}$.
Thus Lemma~\ref{kfatjoinlemma} provides us with a SSYT $\mathcal{T}$ of shape $\mathcal{S}(\mu, \alpha;k)$ with lattice reading word, where $\mu$ is filled as shown above.

We now wish to count all SSYTx of shape $\mathcal{S}(\mu, \alpha;k)$ (shape $\mathcal{S}(\lambda, \alpha;k)$, respectively) with content $\nu=c(\mathcal{T})$. 
Since $\Delta_\alpha$ has a unique way of being filled, we must find all semistandard fillings of $\mu$ ($\lambda$, resp.) with the values $r_1,r_2,\ldots,r_h$. 
Since $r_1 = r_2 = \ldots = r_k = 1$ and $r_{1+k} < r_{2+k} < \ldots < r_{h}$, the values $r_1, r_2, \ldots, r_k$ must appear as the first $k$ values of the first row of $\mu$ ($\lambda$, resp.). 
Further, once we choose $\mu_a-k$ ($\lambda_a -k$, resp.) of the values $r_{1+k}, r_{2+k}, \ldots, r_h$ to appear in the first row of $\mu$ (first row of $\lambda$, resp.), then the remaining $r$'s must appear in the first column and the order of all these values is uniquely determined by the semistandard conditions.

Therefore the number of SSYTx of shape $\mathcal{S}(\mu, \alpha ;k)= \kappa' / \rho'$ with lattice reading word and content $\nu =c(\mathcal{T})$ is given by
\[ c_{\rho' \nu}^{\kappa'} = \left( \begin{array}{c}
                  h-k \\
                    \mu_a -k\\
           \end{array} \right) \]
and the number of SSYTx of shape $\mathcal{S}(\lambda, \alpha ;k)= \kappa / \rho$ with lattice reading word and content $\nu=c(\mathcal{T})$ is given by
\[ c_{\rho \nu}^{\kappa} = \left( \begin{array}{c}
                  h-k \\
                    \lambda_a -k\\
           \end{array} \right).  \]

\

Since $\lambda_a < \mu_a \leq \longlroof \frac{h}{2} \longrroof$, we have $\lambda_a +\mu_a < h+1 \leq h+k$.
Therefore we have $h - \lambda_a > \mu_a-k$ and we obtain
\begin{equation}
\label{antieq1}
h - \lambda_a -i> \mu_a-k-i,
\end{equation}
for each $i$.

\

\noindent Therefore
\begin{eqnarray*}
\  c_{\rho' \nu}^{\kappa'} & = & \frac{(h-k)! }{(h-\mu_a)!(\mu_a-k)!}
\\ & = & \frac{(h-k)! }{(h-\lambda_a)!(\lambda_a-k)!} \times \prod_{i=0}^{\mu_a - \lambda_a-1} \frac{h-\lambda_a-i}{\mu_a-k -i}
\\ & = & c_{\rho \nu}^{\kappa} \times \prod_{i=0}^{\mu_a - \lambda_a-1} \frac{h-\lambda_a-i}{\mu_a-k -i}
\\ & > & c_{\rho \nu}^{\kappa} ,
\end{eqnarray*}
where we have used Equation~\ref{antieq1} in the final step. Therefore $s_{\mathcal{S}(\lambda,\alpha ;k)}-s_{\mathcal{S}(\mu,\alpha ;k)} \not\geq_s 0$.

\

Now consider the case $k=0$. 
%Towards a contradiction, suppose $\lambda_a < \mu_l$.
We now let $r_{1} < r_{2} < \ldots < r_{n}$ be the values of $R_{\alpha,k}$.
We can create a SSYT of shape $\mu$ by filling the boxes of $\mu$ as follows.

\begin{center}
\begin{tabular}{cccccccccc}
$r_1$ & $r_2$ & $\cdots$ &  $r_{\mu_a}$ \\
$r_{\mu_a+1}$\\
$r_{\mu_a+2}$\\
$\vdots$\\
$r_{h}$ \\
\end{tabular}
\end{center}

As before, Lemma~\ref{kfatjoinlemma} provides us with a SSYT $\mathcal{T}$ of shape $\mathcal{S}(\mu, \alpha;k)$ with lattice reading word, where $\mu$ is filled as shown above.
We now wish to count all SSYTx of shape $\mathcal{S}(\mu, \alpha;k)$ (shape $\mathcal{S}(\lambda, \alpha;k)$, respectively) with content $\nu =c(\mathcal{T})$.
In this case only $r_1$ is required to appear at the beginning of the first row of $\mu$ ($\lambda$, resp.). 
Further, once we choose $\mu_a-1$ ($\lambda_a -1$, resp.) of the values $r_{2} < \ldots <r_h$ to appear in the first row of $\mu$ (first row of $\lambda$, resp.), then the remaining $r$'s must appear in the first column of $\mu$ (first column of $\lambda$, resp.) and the order of all these values is uniquely determined by the semistandard conditions.

Therefore the number of SSYTx of shape $\mathcal{S}(\mu, \alpha ;k)= \kappa' / \rho'$ with lattice reading word and content $\nu =c(\mathcal{T})$ is given by
\[ c_{\rho' \nu}^{\kappa'} = \left( \begin{array}{c}
                  h-1 \\
                    \mu_a -1\\
           \end{array} \right) \]
and the number of SSYTx of shape $\mathcal{S}(\lambda, \alpha ;k)= \kappa / \rho$ with lattice reading word and content $\nu=c(\mathcal{T})$ is given by
\[ c_{\rho \nu}^{\kappa} = \left( \begin{array}{c}
                  h-1 \\
                    \lambda_a -1\\
           \end{array} \right).  \]

\

Since $\lambda_a < \mu_a \leq \longlroof \frac{h}{2} \longrroof$, we have $h+1 > \lambda_a +\mu_a$.
Therefore we have $h - \lambda_a > \mu_a-1$ and we once again obtain
\begin{equation}
\label{antieqkkkr}
h - \lambda_a -i> \mu_a-1-i,
\end{equation}
for each $i$.

\

\noindent Therefore
\begin{eqnarray*}
\  c_{\rho' \nu}^{\kappa'} & = & \frac{(h-1)! }{(h-\mu_a)!(\mu_a-1)!}
\\ & = & \frac{(h-1)! }{(h-\lambda_a)!(\lambda_a-1)!} \times \prod_{i=0}^{\mu_a - \lambda_a -1} \frac{h-\lambda_a-i}{\mu_l-1 -i}
\\ & = & c_{\rho \nu}^{\kappa} \times \prod_{i=0}^{\mu_l - \lambda_a-1} \frac{h-\lambda_a-i}{\mu_a-1 -i}
\\ & > & c_{\rho \nu}^{\kappa} ,
\end{eqnarray*}
where we have used Equation~\ref{antieqkkkr} in the final step. Therefore $s_{\mathcal{S}(\lambda,\alpha ;k)}-s_{\mathcal{S}(\mu,\alpha ;k)} \not\geq_s 0$. \qed

\end{proof}

We now depart from looking at the hooks %that appear at the top of our Hasse diagrams. 
$\lambda,\mu$ satisfying $\lambda_a < \mu_a \leq \longlroof \frac{h}{2} \longrroof$. 
Instead, we turn to the hooks 
$\lambda,\mu$ satisfying $\longlroof \frac{h}{2} \longrroof \leq \lambda_a < \mu_a$.
The following theorems describes the relations among the diagrams we displayed on the right in our examples. % that was evident in the examples.

\begin{theorem}
\label{kchainhooks}
Let $\lambda$ and $\mu$ be hooks with $|\lambda|=|\mu|=h \leq n+k$ and $\longlroof \frac{h}{2} \longrroof \leq \lambda_a < \mu_a$, and let $0 \leq k \leq h$.
If $\lambda_a \geq \mu_l + k -1$ then $\mathcal{S}(\lambda, \alpha ;k) \succeq_s \mathcal{S}(\mu, \alpha ;k)$.

\end{theorem}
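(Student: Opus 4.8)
The plan is to prove the stronger combinatorial statement that for every partition $\nu$ the number of SSYTx of shape $\mathcal{S}(\lambda,\alpha;k)$ with content $\nu$ and lattice reading word is at least the corresponding number for $\mathcal{S}(\mu,\alpha;k)$. By Equation~\ref{slambdaskewmu} and the Littlewood--Richardson rule (Theorem~\ref{lr}) these two quantities are exactly the coefficients of $s_\nu$ in $s_{\mathcal{S}(\lambda,\alpha;k)}$ and $s_{\mathcal{S}(\mu,\alpha;k)}$, so establishing the inequality for all $\nu$ is equivalent to $\mathcal{S}(\lambda,\alpha;k)\succeq_s\mathcal{S}(\mu,\alpha;k)$. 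I would prove it by constructing, for each fixed $\nu$, a content-preserving injection from the lattice SSYTx of shape $\mathcal{S}(\mu,\alpha;k)$ to those of shape $\mathcal{S}(\lambda,\alpha;k)$.

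To describe the objects concretely, I would first use the unique filling of $\Delta_\alpha$ together with Lemma~\ref{kfatjoinlemma} (and its reverse shift, justified by Lemma~\ref{kfatfirstrowlemma}) to reduce each tableau to the data of its foundation: since the $\Delta_\alpha$ part is forced, fixing $\nu$ amounts to fixing the multiset $M$ of entries of the hook foundation, and a lattice SSYT of $\mathcal{S}(\lambda,\alpha;k)$ is the same as a semistandard filling of the hook $\lambda$ whose first row consists of at most $k$ copies of $1$ together with distinct values of $R_{\alpha,k}$ and for which $w(\Delta_\alpha)\,w(\lambda)$ remains lattice. Because $\lambda_a<\mu_a$ and $|\lambda|=|\mu|=h$, passing from $\mu$ to $\lambda$ shortens the arm by $d:=\mu_a-\lambda_a$ and lengthens the leg by the same $d$. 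The injection I would define takes a valid $\mu$-filling and relocates its $d$ largest first-row entries into the first column, reinserting them in increasing order. Since $\lambda_a\geq \mu_l+k-1\geq k$ is at least the number of $1$'s in the first row, the $d$ relocated entries are genuine distinct values of $R_{\alpha,k}$, and the shortened arm still retains the required form.

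The main obstacle is verifying that this relocation yields a bona fide lattice SSYT of shape $\mathcal{S}(\lambda,\alpha;k)$, and that the map is injective. For semistandardness and the lattice property I would track the running difference between the number of $(t-1)$'s and the number of $t$'s read so far, exactly as in the proof of Lemma~\ref{kfatjoinlemma}: after the unique filling of $\Delta_\alpha$ each value of $R_{\alpha,k}$ carries slack precisely $1$, and I would show that the lengthened, re-sorted first column of $\lambda$ can be read off so that every newly placed entry sits beneath a strictly smaller entry and is appended only while positive slack is present. This is precisely where the hypothesis $\lambda_a\geq\mu_l+k-1$ is forced: it makes the first column of $\lambda$ short enough, relative to the supply of $R_{\alpha,k}$-values and the $k$ permitted $1$'s, that the relocated values neither collide with existing column entries nor destroy strict increase. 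Injectivity I would then obtain from the explicit inverse that returns the $d$ largest first-column entries to the end of the first row; checking that this inverse lands among valid $\mu$-fillings is routine once the slack bookkeeping is in place. The delicate step I expect to be hardest is exactly this collision/slack analysis for the column of $\lambda$, since---unlike the single special content used in Theorem~\ref{kantichainhooks1} and Theorem~\ref{kantichainhooks11}---here $M$ ranges over all admissible contents, and the relocated first-row values need not be disjoint from the original column values.
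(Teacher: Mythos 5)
Your reduction to the foundation is sound and matches the paper: since $\Delta_\alpha$ has a unique lattice filling, fixing the content $\nu$ fixes the multiset of foundation entries, and (as the paper makes precise) a lattice SSYT of $\mathcal{S}(\mu,\alpha;k)$ is then determined by which of the ``free'' values in $(A\cup A')\cap R_{\alpha,k}$ are placed in the first row, the number of such choices being $\mu_a-q-|R|$ (here $R$ is the set of values other than $1$ forced to appear in \emph{both} the first row and the first column, $q$ the number of $1$'s). The gap is in your injection. First, it is not well defined: the $d=\mu_a-\lambda_a$ largest first-row entries may include elements of $R$, and relocating one of those into the first column duplicates a value already there, destroying strict column increase. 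Second, and more fundamentally, ``drop the $d$ largest'' is not injective. Two $\mu$-fillings whose first-row subsets $S_1\neq S_2$ of $(A\cup A')\cap R_{\alpha,k}$ satisfy $S_1\setminus\{d\text{ largest}\}=S_2\setminus\{d\text{ largest}\}$ (e.g.\ $\{x,y\}$ and $\{x,z\}$ with $d=1$, $x<y<z$) produce the same image, because the image tableau is completely determined by its first row together with the content. Your proposed inverse --- return the $d$ largest first-column entries to the row --- does not repair this: the first column of the image also contains the entries exceeding $|\alpha|+1$ (the values $a'_j=|\alpha|+2,\dots$), which dominate everything relocated, so the $d$ largest column entries of the image are in general not the relocated values at all.

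What is actually needed is the inequality $\binom{N}{\lambda_a-q-|R|}\geq\binom{N}{\mu_a-q-|R|}$ with $N=|(A\cup A')\cap R_{\alpha,k}|$ and $\lambda_a-q-|R|<\mu_a-q-|R|$; this holds only because the two lower indices straddle $N/2$ correctly, i.e.\ because $(\lambda_a-q-|R|)+(\mu_a-q-|R|)\geq N$, which the paper extracts from $\lambda_a\geq j-1$ (its Equations~\ref{keqneed2} and~\ref{khookynew}, using the hypothesis $\lambda_a\geq\mu_l+k-1$ only earlier, to guarantee a $\lambda$-filling exists at all via $|R|\leq\lambda_a-q$). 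Your slack analysis addresses semistandardness and latticeness of a single relocated filling but never produces this numerical inequality, and no ``forgetful'' injection can substitute for it: an explicit injection realizing $\binom{N}{m}\leq\binom{N}{m'}$ for $m'<m$ with $m+m'\geq N$ exists but requires a genuinely different construction (e.g.\ a symmetric chain decomposition of the Boolean lattice), not the removal of extremal elements. I recommend replacing the injection with the direct count: show both Littlewood--Richardson coefficients are binomial coefficients over the same ground set and compare them arithmetically, as the paper does.
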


\begin{proof}

To prove the result, we shall consider any content $\nu$ such that a SSYT of shape $\mathcal{S}(\mu,\alpha ;k)$ with content $\nu$ and lattice reading word exists.
First we shall show that there is also a SSYT of shape $\mathcal{S}(\lambda,\alpha ;k)$ with content $\nu$ and lattice reading word.
Then, letting $\mathcal{S}(\lambda,\alpha ;k)=\kappa / \rho$ and $\mathcal{S}(\mu,\alpha ;k)=\kappa' / {\rho'}$ for partitions $\kappa$, $\kappa'$, $\rho$, and $\rho'$, we shall show that the Littlewood-Richardson coefficients for these two diagrams and this content satisfy 
\[ c_{\rho \nu}^{\kappa} \geq c_{\rho' \nu}^{\kappa'}.\]
Having shown that this inequality holds for any content $\nu$ for which a SSYT of shape $\mathcal{S}(\mu,\alpha ;k)$ with content $\nu$ and lattice reading word exists, this will imply that $s_{\mathcal{S}(\lambda, \alpha ;k)}-s_{\mathcal{S}(\mu,\alpha ;k)} \geq_s 0.$

\

Let $\nu$ be a content such that there is a SSYT $\mathcal{T}_1$ of shape $\mathcal{S}(\mu,\alpha ;k)$ with content $\nu$ and lattice reading word.
By Lemma~\ref{kfatfirstrowlemma} we know that the first row of $\mu$ contains at most $k$ $1$'s. 
Let $q$ be the number of $1$'s in the first row of $\mu$. We write $a_1=a_2 = \ldots a_q =1$. 
Then the rest of the first row of $\mu$ consists of a strictly increasing sequence $a_{q+1}<a_{q+2}<\ldots < a_{\mu_a}$ where each $a_{q+i} \in R_{\alpha,k}$ with $a_{q+i}>1$. 
Also, since the columns of $\mathcal{T}_1$ strictly increase, the first column of $\lambda$ contains a strictly increasing sequence $a'_1<a'_2<\ldots < a'_{\mu_l}$, where $a'_1=a_1$.
Since $\Delta_\alpha$ can only be filled in one way (in both shapes $\mathcal{S}(\mu,\alpha ;k)$ and $\mathcal{S}(\lambda,\alpha ;k)$) and the values $a_1$, \ldots $a_q$ must be placed in the first $q$ positions in the first row of either foundation, in order to obtain a tableau of shape $\mathcal{S}(\lambda,\alpha ;k)$ and content $\nu$ we only need to show how to place the values of $\{a_{q+1},a_{q+2},\ldots,a_{\mu_a}\} \cup \{a'_2,a'_3\ldots,a'_{\mu_l}\}$ in $\lambda$.

\

If $l(\nu) > |\alpha|+1$ for this particular content $\nu$ then there are entries of $\mathcal{T}_1$ greater than $|\alpha|+1$.
Since $\Delta_\alpha$ has content $\delta_\alpha$, the lattice condition implies that $|\alpha|+2$ appears in $\mu$.
Since each $a_i \in R_{\alpha,k}$, we have $a_i \leq |\alpha|+1$ for each $i$ and so $a'_j=|\alpha|+2$ for some $j$. 
The lattice condition and the fact that the column strictly increases gives that 

\begin{eqnarray*}
a'_j &=& |\alpha|+2 \\
a'_{j+1} &=& |\alpha|+3 \\
&\vdots&\\
a'_{\mu_l} &=& |\alpha|+ \mu_l -j +2. \\
\end{eqnarray*}

\noindent Again, by the lattice condition, it is clear that any SSYT of shape $\mathcal{S}(\lambda,\alpha ;k)$ with lattice reading word and content $\nu$ must also have these values $a'_j, a'_{j+1},\ldots, a'_{\mu_l}$ as the last $\mu_l - j+1$ entries of the first column of $\lambda$. 
Since $\lambda_a < \mu_a$ gives $\mu_l < \lambda_l$, we have $\mu_l - j+1 < \lambda_l -j+1$.
Since $j \geq 2$ this gives,
\begin{equation}
\label{kneedeqn3}
\mu_l - j+1  \leq \lambda_l,
\end{equation}
so these entries do fit in this column.
Note that if $l(\nu) \leq |\alpha|+1$, then this sequence of values $a'_j, a'_{j+1},\ldots, a'_{\mu_l}$ is empty and we do not have to worry about placing any entries larger than $|\alpha|+1$ into $\lambda$. 
(In such a case we may consider $j= \mu_l +1$.)

Let $M$ be the multiset $\{a_{q+1},a_{q+2},\ldots,a_{\mu_a}\} \cup \{a'_2,a'_3\ldots,a'_{j-1} \}$. 
Then $M$ is the remaining entries that we still need to place in $\lambda$ to obtain a tableau of shape $\mathcal{S}(\lambda,\alpha;k)$ and content $\nu$. 
We have $|M|=\mu_a+j-2-q$ and $\textrm{max}(M)=|\alpha|+1$. 
Let $R =\{a_{q+1},a_{q+2},\ldots,a_{\mu_a}\} \cap \{a'_2,a'_3,\ldots,a'_{j-1} \}$.
We note that $R =\{a_{q+1},a_{q+2},\ldots,a_{\mu_a}\} \cap \{a'_2,a'_3,\ldots,a'_{\mu_l} \}$ since Lemma~\ref{kfatfirstrowlemma} shows $a_{\mu_a} \in R_{\alpha,k}$, which implies $a_{\mu_a} < |\alpha|+2 = a'_{j}$.
Thus \textit{$R$ is the set of values (except 1) that appear in both the first row and the first column of $\mu$.} 

Since the values of $R$ all appear in the first row of $\mu$, Lemma~\ref{kfatfirstrowlemma} gives that $R \subseteq R_{\alpha,k}$.
For any SSYT of shape $\mathcal{S}(\lambda,\alpha ;k)$ with lattice reading word and content $\nu$, Lemma~\ref{kfatfirstrowlemma} shows that, besides $1$, the values in the first row of $\lambda$ are distinct, so, when creating a filling of $\lambda$, the values in $R$ must also appear in both the first row of $\lambda$ and the first column of $\lambda$.

Consider $A=\{a_{q+1},a_{q+2},\ldots,a_{\mu_a}\}-R$ and $A'=\{a'_2,a'_3\ldots,a'_{j-1} \}-R$.
Since we know that the values of $R$ must appear in both the first row of $\lambda$ and first column of $\lambda$, $A \cup A'$ contains the remaining values of $M$ that need to be placed in $\lambda$. 
In other words, \textit{$A \cup A'$ is the set of all values $\leq |\alpha|+1$ that can appear in exactly one of the first row of $\lambda$ or the first column of $\lambda$.}

We wish to show that 
\begin{equation}
\label{keqneed1}
|R| \leq  \lambda_a -q
\end{equation} holds.
If $q=0$, then $|R| \leq j \leq \mu_l+1 \leq \lambda_l < \lambda_a = \lambda_a -q$. 
Now, when $q \geq 1$ the top-left entry of $\mu$ is $1$ which is not in $R$, hence we have $|R| \leq \mu_l -1 \leq \lambda_a -k \leq \lambda_a -q$, where
we have used the fact that $\lambda_a \geq \mu_l + k -1$.

Now, because Equation~\ref{keqneed1} holds,  
we can extend the values of $R$ to an increasing sequence $b_{q+1} < b_{q+2} < \ldots < b_{\lambda_a}$ by choosing $\lambda_a - |R|-q$ additional values from $(A \cup A') \cap R_{\alpha,k}$. 
There are enough values to choose from since there are 
\begin{equation}
\label{kneed4}
\mu_a - |R| -q \geq \lambda_a -|R|-q
\end{equation}
values of $(A \cup A')\cap R_{\alpha,k}$ present in the first row of $\mu$.
The sequence of $b_i$'s is strictly increasing since $(A \cup A') \cap R = \emptyset$.

Now $M-\{b_{q+1},\ldots,b_{\lambda_a} \} \subseteq M-R$ contains $w=|M|-(\lambda_a-q)=\mu_a+j-2 -\lambda_a$ distinct values, each no greater than $|\alpha|+1$.
That is, they are an increasing sequence $c_1 < c_2 < \ldots < c_{w}$, where $c_{w}\leq |\alpha|+1$ and $c_1>1$.
We have 
\begin{eqnarray*}
\  \lambda_l & = & \mu_a + \mu_l - \lambda_a
\\   & = & 1+ (\mu_a +j-2-\lambda_a) + (\mu_l -j +1)
\\ & = & 1+w+(\mu_l -j +1),
\\
\end{eqnarray*}
so, letting $b_1=b_2=\ldots=b_q=1$, we may fill $\lambda$ as shown below.

\

\begin{center}
\begin{tabular}{cccc}
$b_1$ & $b_2$ & $\cdots$ & $b_{\lambda_a}$ \\
$c_1$\\
$c_2$\\
$\vdots$\\
$c_{w}$\\
$a'_{j}$\\
$a'_{j+1}$\\
$\vdots$\\
$a'_{\mu_l}$
\end{tabular}
\end{center}

\

Since the sequence of $c_i$ is strictly increasing and since $c_1 >1$ and $c_w \leq |\alpha|+1 < |\alpha|+2 =a_j'$ we have
\[ b_1 < c_1 <c_2 \ldots < c_w < a'_j < \ldots < a'_{\mu_l}.\]
That is, the first column of $\lambda$ is increasing. 
We also have 
\[ b_1=b_2=\ldots=b_q=1 \] and
\[ b_{q+1} < b_{q+2} < \ldots < b_{\mu_a},\]
so the first row of $\lambda$ is weakly increasing with $q \leq k$ $1$'s.  
Hence this filling gives us a SSYT $T$ of shape $\lambda \oplus \Delta_\alpha$ and content $\nu$. %, where the subtableaux of shape $\Delta_\alpha$ and $\lambda$ are each SSYT.

We now check that $T$ has a lattice reading word so that we may apply Lemma~\ref{kfatjoinlemma} to obtain the desired tableau $\mathcal{T}_2$ of shape $\mathcal{S}(\lambda,\alpha;k)$.
Suppose that $T$ does not have a lattice reading word. 
Then, when reading the foundation $\lambda$ of $T$, we must reach a point where the lattice condition failed. 
Let $x$ be the value that, when read, caused the lattice condition to fail.
The lattice condition could not have failed when reading the first row of $\lambda$ since the lattice condition places no restriction on the number of $1$'s and the remaining values in the first row of $\lambda$ were distinct values chosen from $R_{\alpha,k}$.
Therefore $x>1$ and \textit{this} $x$ which violated the lattice condition appears somewhere in the first column of $\lambda$.
We inspect the the two cases $x \in R_{\alpha,k}$ and $x \not\in R_{\alpha,k}$.

Consider the first case, $x \in R_{\alpha,k}$.
If a value of $R_{\alpha,k}$ appears only once in the foundation then reading this value cannot violate the lattice condition.
Thus, since the lattice condition failed at this $x$, this $x$ cannot be the first time that $x$ was read in $\lambda$. 
Since the columns strictly increase, the previous $x$ must have appeared in the first row of $\lambda$ and, since the values in the first row are distinct, this is the only other $x$ in $\lambda$.
Since the content of $\lambda$ is the same as the content of $\mu$, these two $x$'s appear in $\mu$ as well. 
Using the fact that $\mathcal{T}_1$ has a lattice reading word, together with the content of $\Delta_{\alpha}$, we find that the value $x-1$ appeared in $\mu$. 
Thus the value $x-1$ also appears in $\lambda$. 
Now, since both the rows and columns of $\lambda$ must weakly increase, either the $x-1$ appears in the first column of $\lambda$ above the entry $x$, or the $x-1$ appears in the first row of $\lambda$ left of the entry $x$. 
In either case the $x-1$ is read before the second $x$ is read and the lattice condition will not fail when reading this second $x$, contrary to our assuption.

We now look at the second case, where $x \not\in R_{\alpha,k}$. %  $x$ cannot have appeared in the first row of $\mu$. 
Again, the $x$ we are interested in appears in the first column of $\lambda$. 
There cannot be a second $x$ in $\lambda$ since the column strictly increases and $x \not\in R_{\alpha,k}$ implies that no other $x$ was placed in the first row of $\lambda$.
As before, $x$ must have appeared in $\mu$ and, in particular, it also appears somewhere in the first column.
Since $\mathcal{T}_1$ has a lattice reading word we must read a sequence of values $t$, $t+1$, $t+2$, $\ldots$, $x-2$, $x-1$ in $\mu$, where $t \in R_{\alpha,k}$, before we read the $x$, and we may assume that none of the values $t+1$, $t+2$, $\ldots$, $x-2$, $x-1$ are from $R_{\alpha,k}$. 
Hence each of the values $t$, $t+1$, $\ldots$, $x-2$, $x-1$ also appear in $\lambda$.
None of the values $t+1$, $t+2$, $\ldots$, $x-1$ can appear in the first row of $\lambda$ since the first row was chosen from $R_{\alpha,k}$.
That is, each value $t+1,t+2,\ldots,x-1$ appears in the first column of $\lambda$. 
Also, since both the rows and columns of $\lambda$ must weakly increase, either the $t$ appears in the first column of $\lambda$ above the entry $t+1$, or the $t$ appears in the first row of $\lambda$.
In either case the entire sequence $t$, $t+1$, $\ldots$, $x-2$, $x-1$ is read before the $x$ is read in $\lambda$ and the lattice condition does not fail at $x$, contradicting our assumption.

Since $T$ has a lattice reading word, we can now apply Lemma~\ref{kfatjoinlemma} to obtain the SSYT $\mathcal{T}_2$ of shape $\mathcal{S}(\lambda,\alpha;k)$ with lattice reading word and content $\nu$.
Therefore from any SSYT $\mathcal{T}_1$ of shape $\mathcal{S}(\mu, \alpha ;k)$ with lattice reading word and content $\nu$ we can create a SSYT $\mathcal{T}_2$ of shape $\mathcal{S}(\lambda,\alpha ;k)$ with lattice reading word and content $\nu$.

\

Let $c_{\rho \nu}^{\kappa}$ be the number of SSYTx of shape $\mathcal{S}(\lambda, \alpha;k)$ with lattice reading word and content $\nu$, and $c_{\rho' \nu}^{\kappa'}$ be the number of SSYTx of shape $\mathcal{S}(\mu,\alpha ;k)$ with lattice reading word and content $\nu$.
We shall show that the sets $R$ and $A \cup A'$ that were described above are completely determined by the content $\nu$.
That is, without starting with a specific tableau, but only starting with the desired content of a SSYT of some fat staircase with hook foundation with lattice reading word, we show how to determine $q$, the number of $1$'s in the foundation; $R$, the set of values $\neq 1$ that must appear in both the first row and first column of the foundation; and $A \cup A'$, the set of values $\leq |\alpha| +1$ that can only appear in one of the first row or the first column of the foundation. 

Since $\Delta_\alpha$ is uniquely filled, from $\nu$ we can determine the content of the foundation $\mu$ ($\lambda$, respectively) needed to create a SSYT of shape $\mathcal{S}(\mu, \alpha ;k)$ ($\mathcal{S}(\lambda, \alpha ;k)$, resp.) with lattice reading word and content $\nu$.
From the content of the foundation we can determine $q$, the number of $1$'s in the foundation, and the values $a'_j, a'_{j+1},\ldots$ greater than $|\alpha|+1$.
Since Lemma~\ref{kfatfirstrowlemma} shows that the entries $\neq 1$ in the first row of the foundation strictly increase, any value $\neq 1$ that appears twice in the foundation must appear in both the first row of $\mu$ ($\lambda$, resp.) and first column of $\mu$ ($\lambda$, resp.).
These values give the set $R$.
Then $A \cup A'$ is the set of values in the foundation that are $>1$, $\leq |\alpha|+1$, but are not in $R$.
The first row of $\mu$ (first row of $\lambda$, resp.) must contain $q$ $1$'s and the values in $R$. 
After we determine the remaining entries of the first row of $\mu$ (first row of $\lambda$, resp.), the rest of the foundation is uniquely determined.

Now, to actually form a SSYT of shape $\mathcal{S}(\mu, \alpha ;k)$ ($\mathcal{S}(\lambda, \alpha ;k)$, resp.) with lattice reading word and content $\nu$, 
we only need to choose the remaining $\mu_a-q - |R|$ ($\lambda_a-q - |R|$, resp.) values from the set $(A \cup A') \cap R_{\alpha,k}$ %= \{ x \in \mu \textrm{ } (\lambda \textrm{ resp.}) \textrm{ } | \textrm{ } x \leq |\alpha|+1, \textrm{ } x \in R_{\alpha,k}-R\}$.
to place in the first row of $\mu$ (first row of $\lambda$, resp.).
Therefore the number of SSYTx of shape $\mathcal{S}(\mu, \alpha ;k)= \kappa' / \rho'$ with lattice reading word and content $\nu$ is given by
\[ c_{\rho' \nu}^{\kappa'} = \left( \begin{array}{c}
                  |(A \cup A')\cap R_{\alpha,k}| \\
                    \mu_a -q-|R|\\
           \end{array} \right) \]
and the number of SSYTx of shape $\mathcal{S}(\lambda, \alpha ;k)= \kappa / \rho$ with lattice reading word and content $\nu$ is given by
\[ c_{\rho \nu}^{\kappa} = \left( \begin{array}{c}
                  |(A \cup A')\cap R_{\alpha,k}| \\
                    \lambda_a -q-|R|\\
           \end{array} \right).  \]

\

Since $\lambda_a \geq \lambda_l > \mu_l \geq j-1$, we have 
\begin{equation}
\label{keqneed2}
0 \geq j-1 - \lambda_a.
\end{equation}
Thus, for each $i$ we have  
\begin{eqnarray*}
\  \mu_a -q-|R|-i  & \geq & \mu_a - q- |R|-i + (j-1 - \lambda_a)
\\ & \geq &  (\mu_a -|R|) + (j-1 -|R|) - (\lambda_a -|R|) -i-q
\\ & \geq &   |A|+|A'|- (\lambda_a -|R|) -i-q
\\ & \geq &   |(A \cup A')\cap R_{\alpha,k}|- (\lambda_a -|R|) -i-q.
\end{eqnarray*}
That is, 
\begin{equation}
\label{khookynew}
\mu_a -q-|R|-i \geq  |(A \cup A')\cap R_{\alpha,k}|- (\lambda_a -|R|) -i-q,
\end{equation}
for each $i$.

\

\noindent Therefore
\begin{eqnarray*}
\  c_{\rho \nu}^{\kappa} & = & \frac{|(A \cup A')\cap R_{\alpha,k}|! }{(|(A \cup A')\cap R_{\alpha,k}|-(\lambda_a-q-|R|))!(\lambda_a-q-|R|)!}
\\ & = & \frac{|(A \cup A')\cap R_{\alpha,k}|! }{(|(A \cup A')\cap R_{\alpha,k}|-(\mu_a-q-|R|))!(\mu_a-q-|R|)!} 
\\ & & \times \prod_{i=0}^{\mu_a - \lambda_a-1} \frac{\mu_a -q- |R| -i}{|(A \cup A')\cap R_{\alpha,k}| -(\lambda_a - |R|) -i-q}
\\ & = & c_{\rho' \nu}^{\kappa'} \times \prod_{i=0}^{\mu_a - \lambda_a-1} \frac{\mu_a -q- |R| -i}{|(A \cup A')\cap R_{\alpha,k}| -(\lambda_a - |R|) -i-q}
\\ & \geq & c_{\rho' \nu}^{\kappa'} ,
\end{eqnarray*}
where we have used Equation~\ref{khookynew} in the final step. 
Since this inequality holds for all contents $\nu$ for which there was a SSYT of shape $\mathcal{S}(\mu,\alpha;k)$ with lattice reading word and content $\nu$, we have $s_{\mathcal{S}(\lambda,\alpha ;k)}-s_{\mathcal{S}(\mu,\alpha ;k)} \geq_s 0$. \qed

\end{proof}

\begin{theorem}
\label{kchainhooksb}
Let $\lambda$ and $\mu$ be hooks with $|\lambda|=|\mu|=h\leq n+k$ and $\longlroof \frac{h}{2} \longrroof \leq \lambda_a < \mu_a$, and let $0 \leq k \leq h$.
If $\mathcal{S}(\lambda, \alpha;k) \succeq_s \mathcal{S}(\mu, \alpha;k)$, then $\lambda_a \geq \mu_l+k-1$.
\end{theorem}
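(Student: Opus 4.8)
Since this is the converse of Theorem~\ref{kchainhooks}, the plan is to prove the contrapositive: assuming $\lambda_a \leq \mu_l + k - 2$, I would exhibit a single content $\nu$ for which the number of SSYTx of shape $\mathcal{S}(\mu,\alpha;k)$ with lattice reading word and content $\nu$ strictly exceeds the corresponding number for $\mathcal{S}(\lambda,\alpha;k)$. By the Littlewood--Richardson rule this makes the coefficient of $s_\nu$ in $s_{\mathcal{S}(\lambda,\alpha;k)} - s_{\mathcal{S}(\mu,\alpha;k)}$ negative, so $\mathcal{S}(\lambda,\alpha;k) \not\succeq_s \mathcal{S}(\mu,\alpha;k)$. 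Throughout I would use that, in either foundation, Lemma~\ref{kfatfirstrowlemma} forces the first-row entries to come from $R_{\alpha,k}$ with the value $1$ occurring at most $k$ times, and that all $1$'s of a hook foundation lie in its first row since the columns strictly increase.

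I would split on the size of $\lambda_a$ relative to $k$. When $\lambda_a < k$, I would build, via Lemma~\ref{kfatjoinlemma} and a filling analogous to the one in Theorem~\ref{kantichainhooks1}, a SSYT of shape $\mathcal{S}(\mu,\alpha;k)$ whose foundation carries exactly $q = \lambda_a + 1$ ones (possible because $\lambda_a + 1 \leq \min(k,\mu_a)$). Its content then demands $\lambda_a + 1$ ones, all of which would have to sit in the first row of the foundation of $\lambda$; but that row has only $\lambda_a$ cells, so no SSYT of shape $\mathcal{S}(\lambda,\alpha;k)$ realizes this content, and the count for $\lambda$ is $0$ while that for $\mu$ is positive.

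When $\lambda_a \geq k$, I would first note that $\mu_a > \lambda_a \geq k$ together with $h \leq n+k$ gives $\mu_l = h+1-\mu_a \leq n$, so $R_{\alpha,k}$ contains at least $h-k$ values exceeding $1$. I would then take the content with $k$ ones and the $h-k$ smallest values of $R_{\alpha,k}$ greater than $1$, each used exactly once. Because no value $>1$ is repeated, every such entry is a single first occurrence drawn from $R_{\alpha,k}$, so the lattice condition holds automatically and Lemma~\ref{kfatjoinlemma} yields genuine SSYTx of the staircase with bad foundation. Counting exactly as in the proof of Theorem~\ref{kchainhooks} --- the $k$ ones are forced to the front of the first row, and one only chooses which of the $h-k$ distinct values land in the remainder of the first row --- the numbers of such tableaux are $\left(\begin{array}{c} h-k \\ \mu_a-k \end{array}\right)$ for $\mu$ and $\left(\begin{array}{c} h-k \\ \lambda_a-k \end{array}\right)$ for $\lambda$. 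The hypothesis $\lambda_a \leq \mu_l+k-2$ is precisely $(\mu_a-k)+(\lambda_a-k) < h-k$, which by unimodality of the binomial coefficients forces $\left(\begin{array}{c} h-k \\ \mu_a-k \end{array}\right) > \left(\begin{array}{c} h-k \\ \lambda_a-k \end{array}\right)$, completing the contrapositive.

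The step I expect to be the crux is guaranteeing that the chosen content is genuinely realizable by a lattice-good SSYT of $\mathcal{S}(\mu,\alpha;k)$. This is the reason for using no repeated non-unit values: a value appearing twice in a hook foundation must occupy both the first row and the first column, and reading its second copy can violate the lattice condition unless its predecessor has already appeared, which cannot be ensured for an arbitrary composition $\alpha$. Restricting to single occurrences from $R_{\alpha,k}$ (together with the extra ones used when $\lambda_a<k$) side-steps this entirely. The secondary point needing care is the inequality $\mu_l \leq n$ in the case $\lambda_a \geq k$, which is exactly what makes enough distinct values of $R_{\alpha,k}$ available to form the content.
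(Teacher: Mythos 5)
Your proposal is correct and takes essentially the same route as the paper: the paper likewise argues by contradiction from $\lambda_a<\mu_l+k-1$, uses the content consisting of $k$ ones together with the $h-k$ distinct values of $R_{\alpha,k}$ exceeding $1$, and compares the counts $\left(\begin{array}{c} h-k\\ \mu_l-1\end{array}\right)$ and $\left(\begin{array}{c} h-k\\ \lambda_a-k\end{array}\right)$, which agree with your binomials since $\mu_l-1=(h-k)-(\mu_a-k)$. Your explicit sub-case $\lambda_a<k$ (where the count for $\lambda$ is simply zero) and the remark that for $k=0$ the contrapositive hypothesis is vacuous are minor refinements of details the paper handles only implicitly.
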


\begin{proof}
Since $\longlroof \frac{h}{2} \longrroof \leq \lambda_a < \mu_a$, where $|\lambda| = |\mu| =h$, we have
\[ \mu_l < \lambda_l \leq \longlroof \frac{h}{2} \longrroof \leq \lambda_a < \mu_a.\]

In the case $k=0$ we only need to show that $\lambda_a \geq \mu_l-1$, which is true since $\lambda_a \geq \longlroof \frac{h}{2} \longrroof$ and $\mu_l \leq \longlroof \frac{h}{2} \longrroof$.
\

We turn to the case $1 \leq k \leq h$. 
Towards a contradiction, suppose $\lambda_a < k+\mu_l-1$.

As before, we let $r_1=r_2=\ldots =r_k=1$ and $r_{k+1} < r_{k+2} < \ldots < r_{k+n}$ be the values of $R_{\alpha,k}$ greater than $1$.
We can create a SSYT of shape $\mu$ by filling the boxes of $\mu$ as follows.

\begin{center}
\begin{tabular}{cccccccccc}
$r_1$ & $r_2$ & $\cdots$ & $r_k$ & $r_{k+1}$  & $\cdots$ & $r_{\mu_a}$ \\
$r_{\mu_a+1}$\\
$r_{\mu_a+2}$\\
$\vdots$\\
$r_{h}$ \\
\end{tabular}
\end{center}

\noindent Since we are using $k$ 1's followed by distinct values of $R_{\alpha,k}$, it is easy to check that the resulting tableau of shape $\mu \oplus \Delta_\alpha$ has a lattice reading word. % since each of the entries in the first row of $\lambda$ are from $R_{\alpha,k}$.
Thus Lemma~\ref{kfatjoinlemma} provides us with a SSYT $\mathcal{T}$ of shape $\mathcal{S}(\mu, \alpha;k)$ with lattice reading word, where $\mu$ is filled as shown above.

We now wish to count all SSYTx of shape $\mathcal{S}(\mu, \alpha;k)$ (shape $\mathcal{S}(\lambda, \alpha;k)$, respectively) with content $\nu =c(\mathcal{T})$.
Since $\Delta_\alpha$ has a unique way of being filled, we must find all semistandard fillings of $\mu$ ($\lambda$, resp.) with the values $r_1,r_2,\ldots,r_h$.
Since $r_1 = r_2 = \ldots =r_k=1$, the values $r_1$, $r_2$, $\ldots$, $r_k$ must appear in the first $k$ positions of the first row of $\mu$ ($\lambda$, resp.).
Further, once we choose $\mu_l-1$ ($\lambda_a -k$, resp.) of the values $r_{k+1} <r_{k+2} <\ldots <r_h$ to appear in the first column of $\mu$ (first row of $\lambda$, resp.), then the remaining $r$'s must appear in the first row of $\mu$ (first column of $\lambda$, resp.) and the order of all these values is uniquely determined by the semistandard conditions.

Therefore the number of SSYTx of shape $\mathcal{S}(\mu, \alpha ;k)= \kappa' / \rho'$ with lattice reading word and content $\nu =c(\mathcal{T})$ is given by
\[ c_{\rho' \nu}^{\kappa'} = \left( \begin{array}{c}
                  h-k \\
                    \mu_l -1\\
           \end{array} \right) \]
and the number of SSYTx of shape $\mathcal{S}(\lambda, \alpha ;k)= \kappa / \rho$ with lattice reading word and content $\nu=c(\mathcal{T})$ is given by
\[ c_{\rho \nu}^{\kappa} = \left( \begin{array}{c}
                  h-k \\
                    \lambda_a -k\\
           \end{array} \right).  \]

\

Since $\mu_l < \lambda_l \leq \longlroof \frac{h}{2} \longrroof \leq \lambda_a < \mu_a$, where $|\lambda|=h$, we have
\[ \mu_l + \lambda_a < \lambda_l + \lambda_a =h+1 \]
This gives $h - \lambda_a > \mu_l-1$ and we obtain
\begin{equation}
\label{antieqkkkq}
h - \lambda_a -i> \mu_l-1-i,
\end{equation}
for each $i$.

\

\noindent Therefore
\begin{eqnarray*}
\  c_{\rho' \nu}^{\kappa'} & = & \frac{(h-k)! }{(h-k-\mu_l+1)!(\mu_l-1)!}
\\ & = & \frac{(h-k)! }{(h-\lambda_a)!(\lambda_a-k)!} \times \prod_{i=0}^{\mu_l - \lambda_a+k-2} \frac{h-\lambda_a-i}{\mu_l-1 -i}
\\ & = & c_{\rho \nu}^{\kappa} \times \prod_{i=0}^{\mu_l - \lambda_a+k-2} \frac{h-\lambda_a-i}{\mu_l-1 -i}
\\ & > & c_{\rho \nu}^{\kappa} ,
\end{eqnarray*}
where we have used Equation~\ref{antieqkkkq} in the final step. Therefore $s_{\mathcal{S}(\lambda,\alpha ;k)}-s_{\mathcal{S}(\mu,\alpha ;k)} \not\geq_s 0$, which is a contradiction.
Therefore we have $\lambda_a \geq \mu_l+k-1$. \qed

\end{proof}

Finally, we turn to the hooks 
$\lambda,\mu$ satisfying $\lambda_a, \mu_l \leq \longlroof \frac{h}{2} \longrroof$.

\begin{theorem}
\label{kcrosshooks1}
Let $\lambda$ and $\mu$ be hooks with $|\lambda|=|\mu|=h \leq n+k$ and $\lambda_a, \mu_l \leq \longlroof \frac{h}{2} \longrroof$. 
If $1 \leq k \leq h$ and $\lambda_a \geq \mu_l+k-1$ then $\mathcal{S}(\lambda, \alpha;k) \succeq_s \mathcal{S}(\mu, \alpha;k)$. 
If $k=0$ and $\lambda_a \geq \mu_l$ then $\mathcal{S}(\lambda, \alpha;k) \succeq_s \mathcal{S}(\mu, \alpha;k)$.

\end{theorem}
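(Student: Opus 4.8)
The plan is to mirror the two-stage argument of Theorem~\ref{kchainhooks}. I would fix any content $\nu$ for which a SSYT $\mathcal{T}_1$ of shape $\mathcal{S}(\mu,\alpha;k)$ with lattice reading word exists, first produce from it a SSYT of shape $\mathcal{S}(\lambda,\alpha;k)$ with the same content and lattice reading word, and then show that the Littlewood--Richardson coefficients satisfy $c_{\rho\nu}^{\kappa}\geq c_{\rho'\nu}^{\kappa'}$, where $\mathcal{S}(\lambda,\alpha;k)=\kappa/\rho$ and $\mathcal{S}(\mu,\alpha;k)=\kappa'/\rho'$. Since this holds for every admissible $\nu$, it gives $s_{\mathcal{S}(\lambda,\alpha;k)}\geq_s s_{\mathcal{S}(\mu,\alpha;k)}$. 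As before, I would read off from $\mathcal{T}_1$ the number $q\leq k$ of $1$'s in the first row of $\mu$, the set $R$ of values $\neq 1$ lying in both the first row and first column of $\mu$, and the forced entries $a'_j,\ldots,a'_{\mu_l}$ exceeding $|\alpha|+1$, where $a'_j=|\alpha|+2$ (taking $j=\mu_l+1$ if there are no such entries).

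The construction of the filling of $\lambda$ and the verification that it has a lattice reading word would be carried out verbatim as in Theorem~\ref{kchainhooks}: place $a'_j,\ldots,a'_{\mu_l}$ at the foot of the leg of $\lambda$ (which fits since $\lambda_l>\mu_l$), extend $R$ to a strictly increasing first row of length $\lambda_a$ using values of $R_{\alpha,k}$, insert the leftover values $c_1<\cdots<c_w$ into the remaining leg cells, and then run the same case analysis on the offending value $x$ (according to whether $x\in R_{\alpha,k}$) to see that no lattice violation can occur. Lemma~\ref{kfatjoinlemma} then delivers the required tableau of shape $\mathcal{S}(\lambda,\alpha;k)$. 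None of this part uses the relative sizes of $\lambda_a$ and $\lambda_l$, so it transfers without change.

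The one place the cross case genuinely differs from Theorem~\ref{kchainhooks} is in re-justifying the three inequalities that drove that proof, since $\lambda$ is now a \emph{tall} hook with $\lambda_a\leq\lambda_l$, so the estimates ``$\lambda_a\geq\lambda_l>\mu_l$'' used there are unavailable. The clean substitutes are the two bounds $|R|\leq\mu_l-1$ and $j-1\leq\mu_l$: the former holds because every element of $R$ occupies a first-column cell strictly below the corner (the corner value occurs only once and so lies outside $R$), of which there are $\mu_l-1$. Granting these, for $1\leq k\leq h$ the hypothesis $\lambda_a\geq\mu_l+k-1$ together with $q\leq k$ yields $|R|\leq\mu_l-1\leq\lambda_a-k\leq\lambda_a-q$, the analogue of Equation~\ref{keqneed1}, while $\lambda_a\geq\mu_l\geq j-1$ gives the analogue of Equation~\ref{keqneed2}; for $k=0$ one has $q=0$, and $\lambda_a\geq\mu_l$ supplies both $|R|\leq\mu_l-1<\lambda_a$ and $\lambda_a\geq\mu_l\geq j-1$. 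The third inequality $\mu_a\geq\lambda_a$, the analogue of Equation~\ref{kneed4}, is automatic, since $\mu_l\leq\longlroof\frac{h}{2}\longrroof$ forces $\mu_a=h-\mu_l+1\geq\longlroof\frac{h}{2}\longrroof\geq\lambda_a$. I expect the subtle point to be exactly this accounting: the reason the $k=0$ threshold is $\lambda_a\geq\mu_l$ rather than $\mu_l-1$ is that, when $\nu$ has no entries exceeding $|\alpha|+1$, one has $j-1=\mu_l$, and Equation~\ref{keqneed2} then demands the full $\lambda_a\geq\mu_l$.

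With the three inequalities in hand, the counting step goes through unchanged. Both $R$ and $A\cup A'$ are determined by $\nu$ alone, so the two coefficients reduce to
\[ c_{\rho' \nu}^{\kappa'} = \left( \begin{array}{c} |(A \cup A')\cap R_{\alpha,k}| \\ \mu_a -q-|R| \end{array} \right), \qquad c_{\rho \nu}^{\kappa} = \left( \begin{array}{c} |(A \cup A')\cap R_{\alpha,k}| \\ \lambda_a -q-|R| \end{array} \right), \]
and the identical manipulation as in Theorem~\ref{kchainhooks}, using $\mu_a\geq\lambda_a$ and $\lambda_a\geq j-1$, produces $c_{\rho\nu}^{\kappa}\geq c_{\rho'\nu}^{\kappa'}$, which completes the plan.
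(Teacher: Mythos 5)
Your proposal is correct and follows essentially the same route as the paper: both reduce to the proof of Theorem~\ref{kchainhooks} and re-verify Equations~\ref{kneedeqn3}, \ref{keqneed1}, \ref{kneed4}, and \ref{keqneed2} under the cross-case hypotheses, with the same bounds $|R|\leq\mu_l-1\leq\lambda_a-k\leq\lambda_a-q$, $\mu_a\geq\longlroof\frac{h}{2}\longrroof\geq\lambda_a$, and $\lambda_a\geq\mu_l\geq j-1$. Your closing observation about why the $k=0$ threshold is $\lambda_a\geq\mu_l$ (namely that $j-1=\mu_l$ when no entries exceed $|\alpha|+1$) correctly identifies the binding constraint.
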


\begin{proof}

For $1 \leq k \leq h$ we are given that $\lambda_a \geq \mu_l+k-1$ and we wish to show that $\mathcal{S}(\lambda, \alpha;k) \succeq_s \mathcal{S}(\mu, \alpha;k)$.
We claim that proof of Theorem~\ref{kchainhooks}, with a few equations verified under the current hypotheses, also proves this theorem.

Given the SSYT of shape $\mathcal{S}(\mu, \alpha;k)$ with lattice reading word and content $\nu$, in order to create the SSYT of shape $\mathcal{S}(\lambda, \alpha;k)$ with lattice reading word and content $\nu$ we first needed to check that Equation~\ref{kneedeqn3}, Equation~\ref{keqneed1} and Equation~\ref{kneed4} held.
Namely, we required that $\mu_l-j+1 \leq \lambda_l$, $|R| \leq \lambda_a-q$, and $\mu_a - |R|-q \geq \lambda_a -|R|-q$.
Since the first two equations were satisfied, we were able to fit the required values into the first row and first column of $\lambda$. Further, since $\mu_a -|R|-q \geq \lambda_a -|R|-q$, there were enough values to fill the first row of $\lambda$, and therefore we could construct the tableau with all the desired properties.

In order to show Equation~\ref{kneedeqn3} holds for the assumptions of this theorem, we note that $\mu_l \leq \longlroof \frac{h}{2} \longrroof \leq \lambda_l$.
In order to show Equation~\ref{keqneed1} holds for the assumptions of this theorem, we note that 
\[|R|\leq j-2 \leq \mu_l -1 \leq \lambda_a - k \leq \lambda_a -q.\] % and when $q>0$ we have $|R|<j\leq \mu_l-1 \leq \lambda_a-k \leq \lambda_a -q$.            %when $q=0$ we have $|R|<j \leq \mu_l \leq \lambda_a= \lambda_a -q$ and when $q>0$ we have $|R|<j\leq \mu_l-1 \leq \lambda_a-k \leq \lambda_a -q$. 
In order to show Equation~\ref{kneed4} holds for the assumptions of this theorem, we note that $\mu_a \geq \longlroof \frac{h}{2} \longrroof \geq \lambda_a$.
Therefore we can create a SSYT of shape $\mathcal{S}(\lambda, \alpha;k)$ with lattice reading word and content $\nu$ whenever there exists a SSYT of shape $\mathcal{S}(\mu, \alpha;k)$ with lattice reading word and content $\nu$.

Next, the proof of Theorem~\ref{kchainhooks} checked that, for each of these contents $\nu$, the number of SSYTx of shape $\mathcal{S}(\lambda, \alpha;k)$ with lattice reading word and content $\nu$ is greater than or equal to the number of SSYTx of shape $\mathcal{S}(\mu, \alpha;k)$ with lattice reading word and content $\nu$.
To prove this, we first required that Equation~\ref{keqneed2} held.
Namely, we required that $\lambda_a \geq j-1$.
We used this equation to show that Equation~\ref{khookynew} held for each $i$, which gave us the desired inequality for the Littlewood-Richardson numbers.
In order to show Equation~\ref{keqneed2} holds for the assumptions of this theorem, we note that 
\[ \lambda_a \geq \mu_l +k-1 \geq j-1 +k-1 \geq j-1. \]
Therefore the inequality for the Littlewood-Richardson numbers holds here as well, which proves that \[s_{\mathcal{S}(\lambda,\alpha;k)}-s_{\mathcal{S}(\mu,\alpha;k)} \geq_s 0. \]

\

In the case of $k=0$ we are assuming that $\lambda_a \geq \mu_l$. 
The only parts of the above argument that need to be adjusted are the proofs that Equation~\ref{keqneed1} and Equation~\ref{keqneed2} hold under the current hypotheses. 
For Equation~\ref{keqneed1} we note that
\[|R|\leq j-2 \leq \mu_l -1 \leq \lambda_a - 1 \leq \lambda_a  = \lambda_a -q\] 
since $q=0$, and for Equation~\ref{keqneed2} we note that 
\[  \lambda_a \geq \mu_l  \geq j-1. \] 
As before, we therefore obtain \[s_{\mathcal{S}(\lambda,\alpha;k)}-s_{\mathcal{S}(\mu,\alpha;k)} \geq_s 0. \qed\]

\end{proof}

\begin{theorem}
\label{kcrosshooks11}
Let $\lambda$ and $\mu$ be hooks with $|\lambda|=|\mu|=h\leq n+k$ and $\lambda_a, \mu_l \leq \longlroof \frac{h}{2} \longrroof$.
If $1 \leq k \leq h$ and $\mathcal{S}(\lambda, \alpha;k) \succeq_s \mathcal{S}(\mu, \alpha;k)$, then $\lambda_a \geq k+\mu_l-1$.
If $k=0$ and $\mathcal{S}(\lambda, \alpha;k) \succeq_s \mathcal{S}(\mu, \alpha;k)$, then $\lambda_a \geq \mu_l$.
\end{theorem}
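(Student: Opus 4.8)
The plan is to establish the contrapositive and to lean on the machinery already built for Theorem~\ref{kchainhooksb}, changing only the step that invoked the chain hypothesis (much as Theorem~\ref{kcrosshooks1} reused Theorem~\ref{kchainhooks}). For $1 \leq k \leq h$ I would assume, towards a contradiction, that $\lambda_a < k + \mu_l - 1$, and for $k = 0$ that $\lambda_a < \mu_l$; in either case the target is a single content $\nu$ whose coefficient in $s_{\mathcal{S}(\mu,\alpha;k)}$ strictly exceeds its coefficient in $s_{\mathcal{S}(\lambda,\alpha;k)}$, since such a $\nu$ forces $s_{\mathcal{S}(\lambda,\alpha;k)} - s_{\mathcal{S}(\mu,\alpha;k)} \not\geq_s 0$, contradicting the assumed relation $\mathcal{S}(\lambda,\alpha;k) \succeq_s \mathcal{S}(\mu,\alpha;k)$.

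To manufacture $\nu$ I would reuse the explicit filling of Theorem~\ref{kchainhooksb}: set $r_1 = \cdots = r_k = 1$, let $r_{k+1} < \cdots < r_{k+n}$ run through the elements of $R_{\alpha,k}$ exceeding $1$, place $r_1,\ldots,r_{\mu_a}$ along the first row of $\mu$ and $r_{\mu_a+1},\ldots,r_h$ down its first column, and apply Lemma~\ref{kfatjoinlemma} to get a SSYT $\mathcal{T}$ of shape $\mathcal{S}(\mu,\alpha;k)$ with lattice reading word and content $\nu = c(\mathcal{T})$. Because $\Delta_\alpha$ admits a unique filling and Lemma~\ref{kfatfirstrowlemma} forces the non-$1$ entries of any admissible first row to be distinct elements of $R_{\alpha,k}$, enumerating the lattice fillings of each shape reduces to deciding which of the $h-k$ distinct values occupy the arm. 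This is identical to the count in Theorem~\ref{kchainhooksb}, giving $c_{\rho'\nu}^{\kappa'} = \left( \begin{array}{c} h-k \\ \mu_l - 1 \end{array} \right)$ for $\mu$ and $c_{\rho\nu}^{\kappa} = \left( \begin{array}{c} h-k \\ \lambda_a - k \end{array} \right)$ for $\lambda$ (and the corresponding $\left( \begin{array}{c} h-1 \\ \mu_l-1 \end{array} \right)$, $\left( \begin{array}{c} h-1 \\ \lambda_a-1 \end{array} \right)$ when $k=0$), so this portion transfers without change.

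The only step that must be redone is the strict inequality $c_{\rho'\nu}^{\kappa'} > c_{\rho\nu}^{\kappa}$. Exactly as in Theorem~\ref{kchainhooksb}, expressing the ratio as the telescoping product $\prod_{i=0}^{\mu_l - \lambda_a + k - 2} \frac{h-\lambda_a-i}{\mu_l-1-i}$, the contradiction hypothesis $\lambda_a < \mu_l + k - 1$ guarantees at least one factor, and every factor exceeds $1$ precisely when $h - \lambda_a > \mu_l - 1$, i.e. when Equation~\ref{antieqkkkq} holds. Thus the whole theorem reduces to re-deriving $\lambda_a + \mu_l \leq h$ from the cross hypothesis $\lambda_a, \mu_l \leq \lceil h/2 \rceil$ in place of the chain inequality $\mu_l < \lambda_l$. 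For $k = 0$ the analogous remark is that $\lambda_a < \mu_l \leq \lceil h/2 \rceil$ puts both $\lambda_a - 1$ and $\mu_l - 1$ strictly below the apex of the symmetric coefficient $\left( \begin{array}{c} h-1 \\ \cdot \end{array} \right)$, so the $\mu$-count again wins.

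I expect the boundary behaviour of this last inequality to be the real obstacle. Since $\lambda_a, \mu_l \leq \lceil h/2 \rceil$ only yields $\lambda_a + \mu_l \leq 2\lceil h/2 \rceil$, the needed bound $\lambda_a + \mu_l \leq h$ can fail in the lone case $h$ odd with $\lambda_a = \mu_l = (h+1)/2$, where the telescoping factors all equal $1$ and the two counts merely tie. I would remove this case by noting that $\lambda_a = \mu_l = (h+1)/2$ forces $\lambda = \mu$, placing it outside the comparison of distinct diagrams that the theorem concerns; everywhere else the product is strictly larger than $1$, the contradiction lands, and we conclude $\lambda_a \geq k + \mu_l - 1$ (respectively $\lambda_a \geq \mu_l$). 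A smaller point to verify is that the filling with $k$ ones is admissible, i.e. that $k$ does not exceed $\mu_a$ so that $\mathcal{S}(\mu,\alpha;k)$ is connected; the remaining large-$k$ regime is controlled by the factorization $s_{\mathcal{S}(\mu,\alpha;k)} = s_\mu s_{\Delta_\alpha}$ recorded earlier, which must separately be checked to yield no Schur-positive difference.
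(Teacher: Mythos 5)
Your proposal follows essentially the same route as the paper: the same contradiction hypothesis, the same explicit filling of $\mu$ via Lemma~\ref{kfatjoinlemma}, the same binomial counts $\binom{h-k}{\mu_l-1}$ versus $\binom{h-k}{\lambda_a-k}$ (and their $k=0$ analogues), the same telescoping-product comparison resting on $h+1 > \lambda_a + \mu_l$, and the same disposal of the boundary case $\lambda_a=\mu_l=(h+1)/2$ by noting it forces $\lambda=\mu$. The connectivity caveat you raise for large $k$ (when $k \geq \mu_a$ the foundation detaches and the filling must be reconsidered) is a point the paper's proof passes over silently, so flagging it is a small improvement rather than a divergence in method.
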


\begin{proof}
We begin with the case $1 \leq k \leq h$. 
Towards a contradiction, suppose $\lambda_a < k+\mu_l-1$.

As before, we let $r_1=r_2=\ldots =r_k=1$ and $r_{k+1} < r_{k+2} < \ldots < r_{k+n}$ be the values of $R_{\alpha,k}$ greater than $1$.
We can create a SSYT of shape $\mu$ by filling the boxes of $\mu$ as follows.

\begin{center}
\begin{tabular}{cccccccccc}
$r_1$ & $r_2$ & $\cdots$ & $r_k$ & $r_{k+1}$  & $\cdots$ & $r_{\mu_a}$ \\
$r_{\mu_a+1}$\\
$r_{\mu_a+2}$\\
$\vdots$\\
$r_{h}$ \\
\end{tabular}
\end{center}

\noindent Since we are using $k$ 1's followed by distinct values of $R_{\alpha,k}$, it is easy to check that the resulting tableau of shape $\mu \oplus \Delta_\alpha$ has a lattice reading word. % since each of the entries in the first row of $\lambda$ are from $R_{\alpha,k}$.
Thus Lemma~\ref{kfatjoinlemma} provides us with a SSYT $\mathcal{T}$ of shape $\mathcal{S}(\mu, \alpha;k)$ with lattice reading word, where $\mu$ is filled as shown above.

We now wish to count all SSYTx of shape $\mathcal{S}(\mu, \alpha;k)$ (shape $\mathcal{S}(\lambda, \alpha;k)$, respectively) with content $\nu =c(\mathcal{T})$.
Since $\Delta_\alpha$ has a unique way of being filled, we must find all semistandard fillings of $\mu$ ($\lambda$, resp.) with the values $r_1,r_2,\ldots,r_h$.
Since $r_1 = r_2 = \ldots =r_k=1$, the values $r_1$, $r_1$, $\ldots$ ,$r_k$ must appear in the first $k$ positions of the first row of $\mu$ ($\lambda$, resp.).
Further, once we choose $\mu_l-1$ ($\lambda_a -k$, resp.) of the values $r_{k+1} <r_{k+2} <\ldots <r_h$ to appear in the first column of $\mu$ (first row of $\lambda$, resp.), then the remaining $r$'s must appear in the first row of $\mu$ (first column of $\lambda$, resp.) and the order of all these values is uniquely determined by the semistandard conditions.

Therefore the number of SSYTx of shape $\mathcal{S}(\mu, \alpha ;k)= \kappa' / \rho'$ with lattice reading word and content $\nu =c(\mathcal{T})$ is given by
\[ c_{\rho' \nu}^{\kappa'} = \left( \begin{array}{c}
                  h-k \\
                    \mu_l -1\\
           \end{array} \right) \]
and the number of SSYTx of shape $\mathcal{S}(\lambda, \alpha ;k)= \kappa / \rho$ with lattice reading word and content $\nu=c(\mathcal{T})$ is given by
\[ c_{\rho \nu}^{\kappa} = \left( \begin{array}{c}
                  h-k \\
                    \lambda_a -k\\
           \end{array} \right).  \]

\

Since $\lambda_a, \mu_l \leq \longlroof \frac{h}{2} \longrroof$, we have $h+1 \geq \lambda_a +\mu_l$.
If we have $h+1 = \lambda_a +\mu_l$, then this implies that $h$ is odd and $\lambda_a = \mu_l = \longlroof \frac{h}{2} \longrroof = \frac{h+1}{2}$. 
Using the fact that $|\lambda| = |\mu| = h$ implies $\lambda_l = \mu_a = \longlroof \frac{h}{2} \longrroof = \frac{h+1}{2}$ as well. 
Therefore $\lambda = \mu$. 
However, we are only interested in distinct hooks $\lambda$ and $\mu$. 
Thus, among distinct pairs $\lambda$ and $\mu$, we cannot have $h+1 = \lambda_a +\mu_l$.

Thus for $\lambda \neq \mu$ with $\lambda_a, \mu_l \leq \longlroof \frac{h}{2} \longrroof$, we have $h+1 > \lambda_a +\mu_l$.
This gives $h - \lambda_a > \mu_l-1$ and we obtain
\begin{equation}
\label{antieqkkkt}
h - \lambda_a -i> \mu_l-1-i,
\end{equation}
for each $i$.

\

\noindent Therefore
\begin{eqnarray*}
\  c_{\rho' \nu}^{\kappa'} & = & \frac{(h-k)! }{(h-k-\mu_l+1)!(\mu_l-1)!}
\\ & = & \frac{(h-k)! }{(h-\lambda_a)!(\lambda_a-k)!} \times \prod_{i=0}^{\mu_l - \lambda_a+k-2} \frac{h-\lambda_a-i}{\mu_l-1 -i}
\\ & = & c_{\rho \nu}^{\kappa} \times \prod_{i=0}^{\mu_l - \lambda_a+k-2} \frac{h-\lambda_a-i}{\mu_l-1 -i}
\\ & > & c_{\rho \nu}^{\kappa} ,
\end{eqnarray*}
where we have used Equation~\ref{antieqkkkt} in the final step. Therefore $s_{\mathcal{S}(\lambda,\alpha ;k)}-s_{\mathcal{S}(\mu,\alpha ;k)} \not\geq_s 0$, which is a contradiction.
Therefore we have $\lambda_a \geq \mu_l+k-1$. 

\

Now consider the case $k=0$. 
Towards a contradiction, suppose $\lambda_a < \mu_l$.
We now let $r_{1} < r_{2} < \ldots < r_{n}$ be the values of $R_{\alpha,k}$.
We can create a SSYT of shape $\mu$ by filling the boxes of $\mu$ as follows.

\begin{center}
\begin{tabular}{cccccccccc}
$r_1$ & $r_2$ & $\cdots$ &  $r_{\mu_a}$ \\
$r_{\mu_a+1}$\\
$r_{\mu_a+2}$\\
$\vdots$\\
$r_{h}$ \\
\end{tabular}
\end{center}

As before, Lemma~\ref{kfatjoinlemma} provides us with a SSYT $\mathcal{T}$ of shape $\mathcal{S}(\mu, \alpha;k)$ with lattice reading word, where $\mu$ is filled as shown above.
We now wish to count all SSYTx of shape $\mathcal{S}(\mu, \alpha;k)$ (shape $\mathcal{S}(\lambda, \alpha;k)$, respectively) with content $\nu =c(\mathcal{T})$.
In this case only $r_1$ is required to appear at the beginning of the first row of $\mu$ ($\lambda$, resp.). 
Further, once we choose $\mu_l-1$ ($\lambda_a -1$, resp.) of the values $r_{2} < \ldots <r_h$ to appear in the first column of $\mu$ (first row of $\lambda$, resp.), then the remaining $r$'s must appear in the first row of $\mu$ (first column of $\lambda$, resp.) and the order of all these values is uniquely determined by the semistandard conditions.

Therefore the number of SSYTx of shape $\mathcal{S}(\mu, \alpha ;k)= \kappa' / \rho'$ with lattice reading word and content $\nu =c(\mathcal{T})$ is given by
\[ c_{\rho' \nu}^{\kappa'} = \left( \begin{array}{c}
                  h-1 \\
                    \mu_l -1\\
           \end{array} \right) \]
and the number of SSYTx of shape $\mathcal{S}(\lambda, \alpha ;k)= \kappa / \rho$ with lattice reading word and content $\nu=c(\mathcal{T})$ is given by
\[ c_{\rho \nu}^{\kappa} = \left( \begin{array}{c}
                  h-1 \\
                    \lambda_a -1\\
           \end{array} \right).  \]

\

As before, for $\lambda \neq \mu$ with $\lambda_a, \mu_l \leq \longlroof \frac{h}{2} \longrroof$, we have $h+1 > \lambda_a +\mu_l$.
This gives $h - \lambda_a > \mu_l-1$ and we obtain
\begin{equation}
\label{antieqkkk}
h - \lambda_a -i> \mu_l-1-i,
\end{equation}
for each $i$.

\

\noindent Therefore
\begin{eqnarray*}
\  c_{\rho' \nu}^{\kappa'} & = & \frac{(h-1)! }{(h-\mu_l)!(\mu_l-1)!}
\\ & = & \frac{(h-1)! }{(h-\lambda_a)!(\lambda_a-1)!} \times \prod_{i=0}^{\mu_l - \lambda_a -1} \frac{h-\lambda_a-i}{\mu_l-1 -i}
\\ & = & c_{\rho \nu}^{\kappa} \times \prod_{i=0}^{\mu_l - \lambda_a-1} \frac{h-\lambda_a-i}{\mu_l-1 -i}
\\ & > & c_{\rho \nu}^{\kappa} ,
\end{eqnarray*}
where we have used Equation~\ref{antieqkkk} in the final step. Therefore $s_{\mathcal{S}(\lambda,\alpha ;k)}-s_{\mathcal{S}(\mu,\alpha ;k)} \not\geq_s 0$, which is a contradiction.
Therefore we have $\lambda_a \geq \mu_l$. \qed

\end{proof}

\section{Fat Staircases with Hook Complement Foundations}

In this section we show how to extend the results of Section 3.1 to fat staircases with bad foundations where the foundations are the complements of hook diagrams. 
As before, when considering a composition $\alpha$ we shall let $n= l(\alpha)$. 
That is, $\alpha = (\alpha_1, \alpha_2, \ldots, \alpha_n)$. 
With this convention, the diagram $\delta_\alpha$ ($\Delta_\alpha$, respectively) has width $n$ and length $|\alpha|= \sum_{i=1}^{n} \alpha_i$. 
However, in this section we shall restrict the values of $k$ so that $0 \leq k \leq 1$.

Given a partition $\rho$ contained in the $a \times b$ rectangle $(b^a)$ we define the \textit{complementary partition $\rho^c$ in the rectangle $(b^a)$} by  $\rho^c = ((b^a) / \rho)^\circ$. 
That is, $\rho^c$ is the complement of $\rho$ in $(b^a)$ rotated by $180^\circ$. 
It is easy to see that this definition does define a partition. 
We display the relevant diagrams below for clarity.

\setlength{\unitlength}{0.4mm}

\begin{picture}(100,60)(120,-25)

\put(170,5){$\rho$}
\put(193,-15){$(\rho^c)^\circ $}

\put(160,20){\line(1,0){60}}

\put(160,-10){\line(1,0){20}}
\put(180,0){\line(1,0){10}}
\put(190,10){\line(1,0){20}}

\put(160,-10){\line(0,1){30}}
\put(180,-10){\line(0,1){10}}
\put(190,0){\line(0,1){10}}
\put(210,10){\line(0,1){10}}

\put(160,-30){\dashbox{1}(60,50)[tl]{ }}

\put(285,0){$\rho^c$}
\put(312,-21){$\rho^\circ $}

\put(270,20){\line(1,0){60}}

\put(270,-30){\line(1,0){10}}
\put(280,-20){\line(1,0){20}}
\put(300,-10){\line(1,0){10}}
\put(310,0){\line(1,0){20}}

\put(270,-30){\line(0,1){50}}
\put(280,-30){\line(0,1){10}}
\put(300,-20){\line(0,1){10}}
\put(310,-10){\line(0,1){10}}
\put(330,0){\line(0,1){20}}

\put(270,-30){\dashbox{1}(60,50)[tl]{ }}

\end{picture}

\

\

For what follows, we shall make use of the following fact.

\begin{theorem} (\cite{complementcite})
\label{firstcut}
Let $\rho$ be a partition contained in the $a \times b$ rectangle $(b^a)$, $\kappa \subset \rho$ be a second partition. Then the skew diagram $\rho / \kappa$ satisfies 
\[ s_{\rho / \kappa} = \sum_{\nu \subseteq (b^a)} c_{\kappa \rho^c}^{\nu} s_{\nu^c}, \] where $c_{\kappa \rho^c}^{\nu}$ are the Littlewood-Richardson coefficients.
\end{theorem}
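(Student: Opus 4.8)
The plan is to expand $s_{\rho/\kappa}$ in the Schur basis, reindex the resulting sum by rectangular complementation, and then identify the coefficients with the desired Littlewood--Richardson numbers via a complementation symmetry. By Equation~\ref{slambdaskewmu} we have $s_{\rho/\kappa} = \sum_{\sigma} c_{\kappa\sigma}^{\rho}\, s_\sigma$, and every $\sigma$ contributing a nonzero term satisfies $\sigma \subseteq \rho \subseteq (b^a)$, so $\sigma$ lies in the rectangle and can be written uniquely as $\sigma = \nu^c$ for some $\nu \subseteq (b^a)$ (namely $\nu = \sigma^c$). Reindexing the sum by $\sigma = \nu^c$ gives $s_{\rho/\kappa} = \sum_{\nu \subseteq (b^a)} c_{\kappa\,\nu^c}^{\rho}\, s_{\nu^c}$. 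Comparing this with the asserted formula, it suffices to prove the coefficient identity $c_{\kappa\,\nu^c}^{\rho} = c_{\kappa\,\rho^c}^{\nu}$ for all $\nu \subseteq (b^a)$.

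To establish this identity I would first prove the basic complementation symmetry: for $\mu \subseteq \lambda \subseteq (b^a)$ and any partition $\tau$,
\[ c_{\mu\tau}^{\lambda} = c_{\lambda^c\,\tau}^{\mu^c}, \]
where the complements are taken in $(b^a)$. This follows directly from Theorem~\ref{rotate}. Indeed, rotating the rectangle $(b^a)$ by $180^\circ$ carries the cells of the skew shape $\lambda/\mu$ exactly onto those of $\mu^c/\lambda^c$ (the rotation sends the cells of $\lambda$ to the complement of $\lambda^c$ and the cells of $\mu$ to the complement of $\mu^c$, while $\mu \subseteq \lambda$ guarantees $\lambda^c \subseteq \mu^c$), so $(\lambda/\mu)^\circ = \mu^c/\lambda^c$ and hence $s_{\lambda/\mu} = s_{\mu^c/\lambda^c}$. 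Expanding both sides with Equation~\ref{slambdaskewmu} and comparing the coefficient of $s_\tau$ yields the displayed identity. Combining it with the commutativity $c_{\mu\tau}^{\lambda} = c_{\tau\mu}^{\lambda}$ (a consequence of $s_\mu s_\tau = s_\tau s_\mu$ in Equation~\ref{smusnu}) gives the variant
\[ c_{\mu\tau}^{\lambda} = c_{\tau\mu}^{\lambda} = c_{\lambda^c\,\mu}^{\tau^c} = c_{\mu\,\lambda^c}^{\tau^c}. \]
Specializing this last form to $\mu = \kappa$, $\tau = \nu^c$, and $\lambda = \rho$ produces $c_{\kappa\,\nu^c}^{\rho} = c_{\kappa\,\rho^c}^{(\nu^c)^c} = c_{\kappa\,\rho^c}^{\nu}$, which is precisely the identity needed to complete the proof.

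The main obstacle, and the step deserving the most care, is the geometric identification $(\lambda/\mu)^\circ = \mu^c/\lambda^c$ inside the rectangle: one must verify carefully how the $180^\circ$ rotation permutes the three regions $\mu$, $\lambda/\mu$, and $(b^a)/\lambda$, and confirm the containment $\lambda^c \subseteq \mu^c$ so that $\mu^c/\lambda^c$ is a legitimate skew shape. A secondary point is bookkeeping for degenerate cases: when $\kappa \not\subseteq \nu$ (equivalently $\nu^c \not\subseteq \rho$) the coefficients on both sides vanish, and one should note that each symmetry above, read with both sides equal to zero, remains valid, so that the reindexing step and the final coefficient comparison are justified uniformly for every $\nu \subseteq (b^a)$.
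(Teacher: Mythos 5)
Your proof is correct. Note that the paper itself offers no proof of this statement: it is imported verbatim from \cite{complementcite} (King, Welsh, and van Willigenburg), so there is no internal argument to compare against. Your derivation is the standard one: expand $s_{\rho/\kappa}=\sum_\sigma c_{\kappa\sigma}^{\rho}s_\sigma$, observe every contributing $\sigma$ lies in $(b^a)$, reindex by $\sigma=\nu^c$, and reduce to the rectangle-complementation symmetry $c_{\mu\tau}^{\lambda}=c_{\lambda^c\tau}^{\mu^c}$, which you correctly obtain from $(\lambda/\mu)^\circ=\mu^c/\lambda^c$ together with Theorem~\ref{rotate} and commutativity of the product. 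The chain $c_{\kappa\,\nu^c}^{\rho}=c_{\nu^c\kappa}^{\rho}=c_{\rho^c\kappa}^{\nu}=c_{\kappa\,\rho^c}^{\nu}$ is valid, including in the degenerate cases, since $\nu^c\not\subseteq\rho$ is equivalent to $\rho^c\not\subseteq\nu$ and then both extreme coefficients vanish. One small imprecision: in your final paragraph you assert that $\kappa\not\subseteq\nu$ is equivalent to $\nu^c\not\subseteq\rho$; these are not equivalent conditions (the complementation-inclusion reversal gives $\nu^c\subseteq\rho\iff\rho^c\subseteq\nu$, which involves $\rho^c$, not $\kappa$). This does not affect the argument, since the vanishing on both sides in the degenerate case is governed by the pair of equivalent containments $\nu^c\subseteq\rho$ and $\rho^c\subseteq\nu$, but the parenthetical should be corrected.
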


Given a symmetric function $f=\sum_{\nu } a_{\nu} s_{\nu}$ we define the \textit{truncated complement of $f$ in the rectangle $(b^a)$} as 
\begin{equation}
\label{cf}
c(f) = \sum_{\nu \subseteq (b^a)} a_{\nu} s_{\nu^c}.
\end{equation}
The rectangle being used should be clear from the context if it is not specifically mentioned.

\

We may now restate Theorem~\ref{firstcut} as follows.

\begin{corollary}
\label{rectcor}
Let $\rho$ be a partition contained in the $a \times b$ rectangle $(b^a)$, and $\kappa \subset \rho$ be a second partition. Then the skew diagram $\rho / \kappa$ satisfies 
\[ s_{\rho / \kappa} = c(s_\kappa s_{\rho^c}). \]
\end{corollary}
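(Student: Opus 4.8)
The plan is to observe that this corollary is nothing more than Theorem~\ref{firstcut} rewritten using the notation for the truncated complement, so the proof consists of unwinding definitions. First I would expand the product $s_\kappa s_{\rho^c}$ by the Littlewood-Richardson rule, Equation~\ref{smusnu}, which gives
\[ s_\kappa s_{\rho^c} = \sum_{\nu} c_{\kappa \rho^c}^{\nu} s_{\nu}. \]
Thus, writing $f = s_\kappa s_{\rho^c}$ in the form $f = \sum_\nu a_\nu s_\nu$ used in the definition of the truncated complement, the coefficient $a_\nu$ is precisely the Littlewood-Richardson coefficient $c_{\kappa \rho^c}^{\nu}$.

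Next I would apply the definition of the truncated complement in the rectangle $(b^a)$, Equation~\ref{cf}. Substituting $a_\nu = c_{\kappa \rho^c}^{\nu}$ yields
\[ c(s_\kappa s_{\rho^c}) = \sum_{\nu \subseteq (b^a)} c_{\kappa \rho^c}^{\nu} s_{\nu^c}. \]
The right-hand side is exactly the expression for $s_{\rho / \kappa}$ furnished by Theorem~\ref{firstcut}, and so the two agree, establishing $s_{\rho / \kappa} = c(s_\kappa s_{\rho^c})$.

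The single point worth remarking on is that the sum defining $c(f)$ runs only over $\nu \subseteq (b^a)$, silently discarding any term $s_\nu$ with $\nu \not\subseteq (b^a)$; I would note that this restriction coincides with the range of summation already imposed in Theorem~\ref{firstcut}, so that no terms are lost or introduced in passing between the two formulations. There is no genuine obstacle here: the corollary is a purely notational repackaging of the theorem, and the verification reduces to matching the two summation formulas term by term.
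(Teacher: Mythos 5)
Your proposal is correct and follows exactly the paper's own argument: expand $s_\kappa s_{\rho^c}$ via the Littlewood--Richardson rule, apply the definition of the truncated complement $c(\cdot)$ in the rectangle $(b^a)$, and observe that the resulting sum is precisely the right-hand side of Theorem~\ref{firstcut}. Your additional remark about the summation ranges matching is a sensible point of care, but no genuinely different ideas are involved.
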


\begin{proof}
From the definition of the Littlewood-Richardson numbers, we have
\[ s_{\kappa} s_{\rho^c} =  \sum_{\nu} c_{\kappa \rho^c}^{\nu} s_{\nu}.\]
Hence \[c(s_{\kappa} s_{\rho^c}) = c(\sum_{\nu} c_{\kappa \rho^c}^{\nu} s_{\nu}) = \sum_{\nu \subseteq (b^a)} c_{\kappa \rho^c}^{\nu} s_{\nu^c}. \]
By Theorem~\ref{firstcut}, this is just $s_{\rho / \kappa}$, so we are done.\qed
\end{proof}

We begin by applying this truncation result to the shapes of the form $\rho / \kappa = \mathcal{S}(\lambda, \alpha;k)$, for $0 \leq k \leq 1$. 
In the next proof we use the operator $[s_\lambda]$ to extract the coefficient of $s_\lambda$ in an expression. That is, if $f=\sum_{\lambda} a_\lambda s_\lambda$ then $[s_\lambda] (f) = a_\lambda$.

\begin{lemma}
\label{cfatstairlemma}
For a partition $\lambda$, composition $\alpha$, and $0 \leq k \leq 1$ we have
\[c(s_{\lambda} s_{\Delta_{\alpha}}) = c(s_{\mathcal{S}(\lambda,\alpha;k)} ),\]
for any complementation in a rectangle of width $w=n+k$.
\end{lemma}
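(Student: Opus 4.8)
The plan is to compare the two sides coefficient-by-coefficient in the Schur basis and to observe that the width-$(n+k)$ truncation discards precisely the contents on which the two expansions could disagree. First I would invoke Theorem~\ref{disjprod} to rewrite $s_\lambda s_{\Delta_\alpha} = s_{\lambda \oplus \Delta_\alpha}$, so that both $s_{\lambda \oplus \Delta_\alpha}$ and $s_{\mathcal{S}(\lambda,\alpha;k)}$ are skew Schur functions. Writing $s_{\lambda \oplus \Delta_\alpha} = \sum_\nu a_\nu s_\nu$ and $s_{\mathcal{S}(\lambda,\alpha;k)} = \sum_\nu b_\nu s_\nu$, the Littlewood-Richardson rule (Theorem~\ref{lr}) identifies $a_\nu$ with the number of SSYTx of shape $\lambda \oplus \Delta_\alpha$ with content $\nu$ and lattice reading word, and $b_\nu$ with the corresponding count for shape $\mathcal{S}(\lambda,\alpha;k)$. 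Since the operator $c(\cdot)$ defined in Equation~\ref{cf} merely retains the terms $s_\nu$ with $\nu \subseteq (w^a)$ and relabels each as $s_{\nu^c}$, it suffices to prove that $a_\nu = b_\nu$ for every $\nu$ with $\nu_1 \leq w = n+k$.

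The key step is a content-preserving bijection supplied by the two lemmas of the previous section. Lemma~\ref{kfatjoinlemma} shows that shifting the foundation to the right carries any SSYT of shape $\lambda \oplus \Delta_\alpha$ with lattice reading word and at most $k$ ones in the first row of $\lambda$ to a SSYT of shape $\mathcal{S}(\lambda,\alpha;k)$ with lattice reading word; conversely, shifting the foundation back to the left is its inverse, and Lemma~\ref{kfatfirstrowlemma} guarantees that every SSYT of shape $\mathcal{S}(\lambda,\alpha;k)$ with lattice reading word has at most $k$ ones in the first row of its foundation and so lies in the image. As shifting boxes horizontally alters neither the entries nor the order in which they are read, this bijection preserves content. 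Consequently $b_\nu$ equals the number of SSYTx of shape $\lambda \oplus \Delta_\alpha$ with content $\nu$, lattice reading word, and at most $k$ ones in the first row of $\lambda$; in particular $b_\nu \leq a_\nu$ always, with equality exactly when every such tableau of content $\nu$ has at most $k$ ones in the first row of $\lambda$.

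It remains to pin down when that constraint is automatic, and this is where the width $w = n+k$ enters. In the unique filling of $\Delta_\alpha$ each of its $n$ columns receives the entries $1,2,\ldots$ from the top, so $\Delta_\alpha$ contributes exactly $n$ ones; since the columns of a SSYT strictly increase, every remaining $1$ must lie in the first row of $\lambda$. Hence any SSYT of shape $\lambda \oplus \Delta_\alpha$ with content $\nu$ has precisely $\nu_1 - n$ ones in the first row of $\lambda$, and the condition ``at most $k$ ones'' is equivalent to $\nu_1 \leq n + k = w$, a condition depending only on $\nu$ and not on the tableau. Therefore $a_\nu = b_\nu$ for every $\nu$ with $\nu_1 \leq w$, and in particular for every $\nu \subseteq (w^a)$. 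Since $c(\cdot)$ retains only the terms indexed by such $\nu$, the two truncated complements agree term by term, yielding $c(s_\lambda s_{\Delta_\alpha}) = c(s_{\mathcal{S}(\lambda,\alpha;k)})$ for any rectangle of width $w = n+k$.

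The main obstacle is not any single estimate but the alignment of two bookkeeping facts: that the shift map of Lemmas~\ref{kfatjoinlemma} and \ref{kfatfirstrowlemma} is a genuine content-preserving bijection onto all lattice fillings of $\mathcal{S}(\lambda,\alpha;k)$, and that the $\Delta_\alpha$ block always supplies exactly $n$ ones. Both are essentially handed to us by the earlier section, so the real crux is the observation that the failure of the bijection is governed by the single inequality $\nu_1 > n+k$, which is precisely the inequality that removes $\nu$ from the rectangle of width $w = n+k$ used in the complementation; this is what makes the truncation match exactly, independently of the chosen height $a$.
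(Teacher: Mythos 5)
Your proposal is correct and follows essentially the same route as the paper: rewrite $s_\lambda s_{\Delta_\alpha}=s_{\lambda\oplus\Delta_\alpha}$ via Theorem~\ref{disjprod}, note that the unique lattice filling of $\Delta_\alpha$ supplies exactly $n$ ones so that a content surviving the width-$(n+k)$ truncation forces at most $k$ ones into the first row of $\lambda$, and then invoke Lemma~\ref{kfatjoinlemma} (with Lemma~\ref{kfatfirstrowlemma} for surjectivity) to get a content-preserving bijection on lattice SSYTx. Your version is, if anything, slightly more explicit than the paper's about why the shift map is a bijection and about the equivalence between the truncation condition $\nu_1\leq n+k$ and the hypothesis of Lemma~\ref{kfatjoinlemma}.
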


\begin{proof} %(of Lemma~\ref{cfatstairlemma})

Let the rectangle be $(w^l)$, say.
We begin by comparing $c(s_{\lambda \oplus \Delta_{\alpha}})$ and $c(s_{\mathcal{S}(\lambda,\alpha; k)})$.

Consider a content $\nu$ that contributes to $c(s_{\lambda \oplus \Delta_{\alpha}})$. 
Then $\nu \subseteq (w^l)$ and there is a SSYT $\mathcal{T}$ of shape ${\lambda \oplus \Delta_{\alpha}}$ and content $\nu^c$ with lattice reading word. 
Since $\nu^c$ is contained in a rectangle of width $w$, $\mathcal{T}$ contains at most $w=n+k$ $1$'s.
We know that exactly $n$ $1$'s appear in the copy of $\Delta_{\alpha}$. 
Thus the copy of $\lambda$ contains at most $k$ $1$'s. %That is, at most one $1$. 
Therefore, by Lemma~\ref{kfatjoinlemma}, we can obtain a SSYT $\mathcal{T}'$ of shape $\mathcal{S}(\lambda,\alpha;k)$ with lattice reading word of content $\nu^c$ by simply filling the entries of $\lambda$ in $\mathcal{S}(\lambda,\alpha;k)$ identically to the filling of $\lambda$ in $\mathcal{T}$.
This correspondence, $\mathcal{T} \mapsto \mathcal{T}'$ gives a bijection.
That is, $[s_{\nu^c}] (s_{\lambda \oplus \Delta_{\alpha}}) = [s_{\nu^c}] (s_{\mathcal{S}(\lambda,\alpha;k)})$ for all $\nu^c \subseteq (w^l)$. %, and so $[s_{\nu^c}] (s_{\lambda \oplus \Delta_{\alpha}}) = [s_{\nu^c}] (s_{\mathcal{S}(\lambda,\alpha;k)})$ for all $\nu$ contained in a rectangle of width $w$.
Thus we find that $c(s_{\lambda \oplus \Delta_{\alpha}}) = c(s_{\mathcal{S}(\lambda,\alpha;k)})$.

We also have $s_{\lambda \oplus \Delta_{\alpha}} = s_{\lambda} s_{\Delta_{\alpha}}$ by Theorem~\ref{disjprod}, and so $c(s_{\lambda \oplus \Delta_{\alpha}}) = c(s_{\lambda} s_{\Delta_{\alpha}})$. 
Thus we have $c(s_{\lambda}s_{\Delta_{\alpha}} ) = c(s_{\mathcal{S}(\lambda,\alpha;k)})$, as desired. \qed

\end{proof}

\

Given a composition $\alpha = (\alpha_1,\alpha_2, \ldots, \alpha_n)$ and a width $w=n+k$, where $0 \leq k \leq 1$, we let 
\[ \alpha^r =  \left\{ \begin{array}{lcc}
            (\alpha_n, \alpha_{n-1},\ldots,\alpha_2, \alpha_1)  & \textrm{ if } & k=1 \\
            (\alpha_{n-1}, \alpha_{n-2},\ldots,\alpha_2, \alpha_1)  & \textrm{ if } & k=0 \\
         \end{array} \right. \] denote the \textit{reverse composition}.
With this definition we have ${\delta_\alpha}^c = \delta_{\alpha^r}$, where the complement is performed in the rectangle $(w^{|\alpha|})$.
We illustrate the two cases $k=0$ and $k=1$ below.

\
\

\

\

\

\

\

\setlength{\unitlength}{0.35mm}

\begin{picture}(100,60)(50,-15)

\put(282,05){$k=1$}
\put(87,05){$k=0$}

\put(282,65){$\delta_{\alpha^r}$}

\put(296,35){$\Delta_{\alpha}$}

\put(346,85){$\alpha_1$}
\put(346,60){$\alpha_2$}
\put(348,45){$\vdots$}
\put(148,45){$\vdots$}

\put(346,33){$\alpha_{n-1}$}
\put(346,22){$\alpha_n$}

\put(335,20){\line(0,1){9}}
\put(335,20){\line(-1,0){3}}
\put(335,29){\line(-1,0){3}}

\put(335,31){\line(0,1){9}}
\put(335,31){\line(-1,0){3}}
\put(335,40){\line(-1,0){3}}

\put(335,50){\line(0,1){19}}
\put(335,50){\line(-1,0){3}}
\put(335,69){\line(-1,0){3}}

\put(335,71){\line(0,1){29}}
\put(335,71){\line(-1,0){3}}
\put(335,100){\line(-1,0){3}}

\put(270,20){\line(1,0){50}}
\put(270,30){\line(1,0){10}}
\put(280,40){\line(1,0){10}}
\put(290,50){\line(1,0){10}}
\put(300,70){\line(1,0){10}}
\put(310,100){\line(1,0){10}}

\put(270,20){\line(0,1){10}}
\put(280,30){\line(0,1){10}}
\put(290,40){\line(0,1){10}}
\put(300,50){\line(0,1){20}}
\put(310,70){\line(0,1){30}}
\put(320,20){\line(0,1){80}}

\put(260,20){\dashbox{1}(60,80)[tl]{ }}

\put(82,65){$\delta_{\alpha^r}$}

\put(96,35){$\Delta_{\alpha}$}

\put(146,85){$\alpha_1$}
\put(146,60){$\alpha_2$}
\put(146,33){$\alpha_{n-1}$}
\put(146,22){$\alpha_n$}

\put(135,20){\line(0,1){9}}
\put(135,20){\line(-1,0){3}}
\put(135,29){\line(-1,0){3}}

\put(135,31){\line(0,1){9}}
\put(135,31){\line(-1,0){3}}
\put(135,40){\line(-1,0){3}}

\put(135,50){\line(0,1){19}}
\put(135,50){\line(-1,0){3}}
\put(135,69){\line(-1,0){3}}

\put(135,71){\line(0,1){29}}
\put(135,71){\line(-1,0){3}}
\put(135,100){\line(-1,0){3}}

\put(70,20){\line(1,0){50}}
\put(70,30){\line(1,0){10}}
\put(80,40){\line(1,0){10}}
\put(90,50){\line(1,0){10}}
\put(100,70){\line(1,0){10}}
\put(110,100){\line(1,0){10}}

\put(70,20){\line(0,1){10}}
\put(80,30){\line(0,1){10}}
\put(90,40){\line(0,1){10}}
\put(100,50){\line(0,1){20}}
\put(110,70){\line(0,1){30}}
\put(120,20){\line(0,1){80}}

\put(70,20){\dashbox{1}(50,80)[tl]{ }}

\end{picture}

Thus we have $l(\delta_{\alpha^r}) \leq l(\delta_{\alpha})$ and $w(\delta_{\alpha^r}) \leq w(\delta_{\alpha})$.
In particular, we have $|\alpha| = |\alpha^r| + (1-k)\alpha_n$.

\begin{theorem}
\label{fatstairtocutstair}
Let $\alpha$ be a composition, $w=n+k$ where $0 \leq k \leq 1$, $l \geq 1$, and $\rho$ be a partition with $|\alpha|+l$ parts such that $(w^{|\alpha|}) \subset \rho \subset (w^{|\alpha|+l})$, $\mu =(\rho_{|\alpha|+1}, \rho_{|\alpha|+2},\ldots,\rho_{|\alpha|+l})$, and $\lambda = \rho^c$ be the complement of $\rho$ in $(w^{|\alpha|+l})$.
Then $\rho / \delta_{\alpha^r} = \mathcal{S}(\mu,  \alpha;k)$ and 
\[s_{\mathcal{S}(\mu, \alpha;k)} =c(s_{\mathcal{S}(\lambda,{\alpha^r};k)}).\]
\end{theorem}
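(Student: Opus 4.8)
The plan is to prove the two assertions in turn: first the identity of skew shapes $\rho/\delta_{\alpha^r}=\mathcal{S}(\mu,\alpha;k)$, and then the symmetric-function identity, which will follow formally from the first together with Corollary~\ref{rectcor}, Theorem~\ref{rotate}, and Lemma~\ref{cfatstairlemma}.

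For the shape identity I would argue by a direct row-length computation. Since $(w^{|\alpha|})\subset\rho\subset(w^{|\alpha|+l})$, the first $|\alpha|$ rows of $\rho$ all have length exactly $w$, while rows $|\alpha|+1,\ldots,|\alpha|+l$ are $\mu_1\ge\cdots\ge\mu_l$. The diagram $\delta_{\alpha^r}$ has width $l(\alpha^r)=n-1+k=w-1$ and at most $|\alpha|$ rows, so it occupies the top-left and meets only the first $|\alpha|$ rows; after deleting it, row $j$ (for $1\le j\le|\alpha|$) keeps its rightmost $w-(\delta_{\alpha^r})_j$ boxes. Reading $(\delta_{\alpha^r})_j$ off the definition of $\delta_{\alpha^r}$, in both cases $k=0,1$ these right-justified lengths are $1^{\alpha_1},2^{\alpha_2},\ldots,n^{\alpha_n}$ from top to bottom, i.e.\ exactly the right-justified diagram $\Delta_\alpha$, while the untouched bottom $l$ rows form $\mu$. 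It then remains to check the relative placement: the bottom-left cell of the recovered $\Delta_\alpha$ lies in column $w-n+1=k+1$ and the first row of $\mu$ begins in column $1$, one row lower and $k$ columns to the left, so $\Delta_\alpha$ and $\mu$ overlap in $\mu_1-k$ positions. This is precisely the definition of $\mathcal{S}(\mu,\alpha;k)$.

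For the symmetric-function identity I would apply Corollary~\ref{rectcor} to $\rho/\delta_{\alpha^r}$ in the rectangle $(w^{|\alpha|+l})$ with $\kappa=\delta_{\alpha^r}$, obtaining $s_{\rho/\delta_{\alpha^r}}=c(s_{\delta_{\alpha^r}}s_{\rho^c})=c(s_{\delta_{\alpha^r}}s_{\lambda})$, with the complement taken in $(w^{|\alpha|+l})$. By Theorem~\ref{rotate}, $s_{\delta_{\alpha^r}}=s_{\Delta_{\alpha^r}}$, so the right-hand side is $c(s_{\lambda}s_{\Delta_{\alpha^r}})$. Lemma~\ref{cfatstairlemma}, applied to the composition $\alpha^r$, then rewrites this as the complement of a fat staircase with hook foundation built on $\Delta_{\alpha^r}$; combined with the shape identity of the previous paragraph, this yields the asserted equation.

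The step I expect to be the main obstacle is keeping the complementation rectangle consistent throughout. Lemma~\ref{cfatstairlemma} equates $c(s_\lambda s_{\Delta_\beta})$ with $c(s_{\mathcal{S}(\lambda,\beta;k')})$ only when the complement is taken in a rectangle of width $l(\beta)+k'$, whereas the complement inherited from Corollary~\ref{rectcor} has width $w=n+k$. Since $l(\alpha^r)=n-1+k$, we have $w=l(\alpha^r)+1$, so the lemma must be invoked for $\alpha^r$ with offset parameter $1$ (admissible, since $0\le1\le1$); this already agrees with the stated foundation when $k=1$. I would treat the boundary case $k=0$ with particular care, re-examining through Lemma~\ref{kfatfirstrowlemma} and Lemma~\ref{kfatjoinlemma} exactly how many $1$'s the foundation of $\lambda$ may carry under a width-$w$ complement, so that the bijection underpinning Lemma~\ref{cfatstairlemma} still applies and the foundation offset used on the right-hand side is the correct one.
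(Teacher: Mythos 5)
Your overall route is the same as the paper's: establish the shape identity $\rho/\delta_{\alpha^r}=\mathcal{S}(\mu,\alpha;k)$ and then chain together Corollary~\ref{rectcor}, Theorem~\ref{rotate} and Lemma~\ref{cfatstairlemma}. Your row-by-row verification of the shape identity is correct and is more careful than the paper's, which simply declares it clear from a picture.

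The obstacle you flag at the end, however, is not something you can dispose of with ``particular care'': it is a genuine problem, and it is present in the paper's own proof, which applies Lemma~\ref{cfatstairlemma} without checking the rectangle width. As you observe, the complement inherited from Corollary~\ref{rectcor} lives in a rectangle of width $w=n+k=l(\alpha^r)+1$, so Lemma~\ref{cfatstairlemma} applied to the composition $\alpha^r$ yields $c(s_{\lambda}s_{\Delta_{\alpha^r}})=c(s_{\mathcal{S}(\lambda,\alpha^r;1)})$, with offset $1$ in both cases; this agrees with the stated conclusion only when $k=1$. For $k=0$ the stated identity is in fact false, so no re-examination of Lemma~\ref{kfatfirstrowlemma} and Lemma~\ref{kfatjoinlemma} will deliver it: a width-$w$ content forces at most one $1$ (not zero $1$'s) in the foundation, which is exactly the offset-$1$ situation. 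Concretely, take $\alpha=(1,1)$, $k=0$, $w=2$, $l=1$, $\rho=(2,2,1)$, so that $\mu=(1)$, $\alpha^r=(1)$, $\lambda=\rho^c=(1)$. Then $s_{\mathcal{S}(\mu,\alpha;0)}=s_{(2,2,1)/(1)}=s_{(2,2)}+s_{(2,1,1)}$, whereas $\mathcal{S}(\lambda,\alpha^r;0)$ is a single column of two boxes and $c(s_{(1,1)})=s_{(2,1,1)}$ in the rectangle $(2^3)$; the missing term $s_{(2,2)}$ is exactly the one supplied by the disconnected shape $\mathcal{S}(\lambda,\alpha^r;1)=(1)\oplus(1)$. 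The conclusion your argument actually establishes, for both values of $k$, is $s_{\mathcal{S}(\mu,\alpha;k)}=c(s_{\lambda}s_{\Delta_{\alpha^r}})=c(s_{\mathcal{S}(\lambda,\alpha^r;1)})$; you should state and prove that version (your argument, with the offset fixed at $1$ in the final step, does so), and note that the correction must be carried into the use of this result in the proof of Theorem~\ref{cuthookhasse} when $k=0$.
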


\begin{proof}
We are interested in the following diagrams, each contained in the rectangle $(w^{l})$. The first set of diagrams illustrates the case $k=1$ and the second set illustrates the case $k=0$.

\

\

\

\

\

\

\setlength{\unitlength}{0.35mm}

\begin{picture}(100,60)(30,-5)

\put(50,20){\line(1,0){10}}
\put(60,30){\line(1,0){10}}
\put(70,40){\line(1,0){10}}
\put(80,50){\line(1,0){10}}
\put(90,70){\line(1,0){10}}
\put(100,100){\line(1,0){10}}

\put(60,20){\line(0,1){10}}
\put(70,30){\line(0,1){10}}
\put(80,40){\line(0,1){10}}
\put(90,50){\line(0,1){20}}
\put(100,70){\line(0,1){30}}
\put(110,20){\line(0,1){80}}

\put(50,-10){\line(1,0){20}}
\put(70,0){\line(1,0){10}}
\put(80,10){\line(1,0){20}}
\put(100,20){\line(1,0){10}}

\put(50,-10){\line(0,1){30}}
\put(70,-10){\line(0,1){10}}
\put(80,0){\line(0,1){10}}
\put(100,10){\line(0,1){10}}

\put(50,-10){\dashbox{1}(60,110)[tl]{ }}

\put(63,65){$\delta_{\alpha^r}$}
\put(88,-5){$\lambda^\circ$}

\put(68,-25){$\rho / \delta_{\alpha^r}$}

\put(132,15){$=$}

\put(173,65){$\delta_{\alpha^r}$}
\put(196,35){$\Delta_{\alpha}$}
\put(170,5){$\mu$}
\put(198,-5){$\lambda^\circ $}

\put(160,20){\line(1,0){60}}
\put(170,30){\line(1,0){10}}
\put(180,40){\line(1,0){10}}
\put(190,50){\line(1,0){10}}
\put(200,70){\line(1,0){10}}
\put(210,100){\line(1,0){10}}

\put(170,20){\line(0,1){10}}
\put(180,30){\line(0,1){10}}
\put(190,40){\line(0,1){10}}
\put(200,50){\line(0,1){20}}
\put(210,70){\line(0,1){30}}
\put(220,20){\line(0,1){80}}

\put(160,-10){\line(1,0){20}}
\put(180,0){\line(1,0){10}}
\put(190,10){\line(1,0){20}}

\put(160,-10){\line(0,1){30}}
\put(180,-10){\line(0,1){10}}
\put(190,0){\line(0,1){10}}
\put(210,10){\line(0,1){10}}

\put(160,-25){$\mathcal{S}(\mu,\alpha;1)$}

\put(160,-10){\dashbox{1}(60,110)[tl]{ }}

\put(277,65){$\delta_{\alpha}$}
\put(302,35){$\Delta_{\alpha^r}$}
\put(285,5){$\lambda$}
\put(314,-3){$\mu^\circ $}

\put(270,20){\line(1,0){60}}
\put(280,50){\line(1,0){10}}
\put(290,70){\line(1,0){10}}
\put(300,80){\line(1,0){10}}
\put(310,90){\line(1,0){10}}
\put(320,100){\line(1,0){10}}

\put(280,20){\line(0,1){30}}
\put(290,50){\line(0,1){20}}
\put(300,70){\line(0,1){10}}
\put(310,80){\line(0,1){10}}
\put(320,90){\line(0,1){10}}
\put(330,20){\line(0,1){80}}

\put(270,-10){\line(1,0){10}}
\put(280,0){\line(1,0){20}}
\put(300,10){\line(1,0){10}}
\put(310,20){\line(1,0){20}}

\put(270,-10){\line(0,1){30}}
\put(280,-10){\line(0,1){10}}
\put(300,0){\line(0,1){10}}
\put(310,10){\line(0,1){10}}
\put(330,20){\line(0,1){20}}

\put(270,-25){$\mathcal{S}(\lambda,{\alpha^r};1)$}

\put(270,-10){\dashbox{1}(60,110)[tl]{ }}

\put(15,60){$|\alpha|$}
\put(15,0){$l$}

\put(35,21){\line(0,1){79}}
\put(35,21){\line(1,0){3}}
\put(35,100){\line(1,0){3}}

\put(35,-10){\line(0,1){29}}
\put(35,-10){\line(1,0){3}}
\put(35,19){\line(1,0){3}}

\end{picture}

\

\

\

\

\

\

\

\

\setlength{\unitlength}{0.35mm}

\begin{picture}(100,60)(30,-5)

\put(60,30){\line(1,0){10}}
\put(70,40){\line(1,0){10}}
\put(80,50){\line(1,0){10}}
\put(90,70){\line(1,0){10}}
\put(100,100){\line(1,0){10}}

\put(60,20){\line(0,1){10}}
\put(70,30){\line(0,1){10}}
\put(80,40){\line(0,1){10}}
\put(90,50){\line(0,1){20}}
\put(100,70){\line(0,1){30}}
\put(110,20){\line(0,1){80}}

\put(60,-10){\line(1,0){10}}
\put(70,0){\line(1,0){10}}
\put(80,10){\line(1,0){20}}
\put(100,20){\line(1,0){10}}

\put(60,-10){\line(0,1){30}}
\put(70,-10){\line(0,1){10}}
\put(80,0){\line(0,1){10}}
\put(100,10){\line(0,1){10}}

\put(60,-10){\dashbox{1}(50,110)[tl]{ }}

\put(68,65){$\delta_{\alpha^r}$}
\put(88,-5){$\lambda^\circ$}

\put(73,-25){$\rho / \delta_{\alpha^r}$}

\put(137,15){$=$}

\put(178,65){$\delta_{\alpha^r}$}
\put(196,35){$\Delta_{\alpha}$}
\put(177,7){$\mu$}
\put(198,-5){$\lambda^\circ $}

\put(170,20){\line(1,0){50}}
\put(170,30){\line(1,0){10}}
\put(180,40){\line(1,0){10}}
\put(190,50){\line(1,0){10}}
\put(200,70){\line(1,0){10}}
\put(210,100){\line(1,0){10}}

\put(170,20){\line(0,1){10}}
\put(180,30){\line(0,1){10}}
\put(190,40){\line(0,1){10}}
\put(200,50){\line(0,1){20}}
\put(210,70){\line(0,1){30}}
\put(220,20){\line(0,1){80}}

\put(170,-10){\line(1,0){10}}
\put(180,0){\line(1,0){10}}
\put(190,10){\line(1,0){20}}

\put(170,-10){\line(0,1){30}}
\put(180,-10){\line(0,1){10}}
\put(190,0){\line(0,1){10}}
\put(210,10){\line(0,1){10}}

\put(172,-25){$\mathcal{S}(\mu,\alpha;0)$}

\put(170,-10){\dashbox{1}(50,110)[tl]{ }}

\put(277,65){$\delta_{\alpha}$}
\put(297,35){$\Delta_{\alpha^r}$}
\put(290,5){$\lambda$}

\put(270,20){\dashbox{1}(10,0)[tl]{ }}

\put(280,20){\line(1,0){40}}
\put(280,50){\line(1,0){10}}
\put(290,70){\line(1,0){10}}
\put(300,80){\line(1,0){10}}
\put(310,90){\line(1,0){10}}

\put(280,20){\line(0,1){30}}
\put(290,50){\line(0,1){20}}
\put(300,70){\line(0,1){10}}
\put(310,80){\line(0,1){10}}
\put(320,20){\line(0,1){70}}

\put(280,-10){\line(1,0){10}}
\put(290,0){\line(1,0){20}}
\put(310,10){\line(1,0){10}}
\put(320,20){\line(1,0){0}}

\put(280,-10){\line(0,1){30}}
\put(290,-10){\line(0,1){10}}
\put(310,0){\line(0,1){10}}
\put(320,10){\line(0,1){10}}

\put(272,-25){$\mathcal{S}(\lambda,{\alpha^r};0)$}

\put(270,-10){\dashbox{1}(50,110)[tl]{ }}

\put(15,60){$|\alpha|$}
\put(15,0){$l$}

\put(35,21){\line(0,1){79}}
\put(35,21){\line(1,0){3}}
\put(35,100){\line(1,0){3}}

\put(35,-10){\line(0,1){29}}
\put(35,-10){\line(1,0){3}}
\put(35,19){\line(1,0){3}}

\end{picture}

\

\

\

\

It is clear from the definition of $\mu$ that we have $\mathcal{S}(\mu, \alpha;k) = \rho / \delta_{\alpha^r}$. 
Therefore we obtain
\begin{eqnarray*}
s_{\mathcal{S}(\mu, \alpha;k)} &=& s_{\rho / \delta_{\alpha^r}} \\%\textrm{ since $\mathcal{S}(\mu, \alpha;k) = \rho / \delta_{\alpha^r}$}\\
&=& c( s_{\lambda} s_{\delta_{\alpha^r}}) \textrm{ by Corollary~\ref{rectcor}}\\
&=& c( s_{\lambda} s_{\Delta_{\alpha^r}}) \\%\textrm{ by Theorem~\ref{rotate}} \\
&=& c(s_{\mathcal{S}(\lambda,{\alpha^r};k)} ) \textrm{ by Lemma~\ref{cfatstairlemma}},\\
\end{eqnarray*}
which is what we wanted to prove. \qed
\end{proof}

\

We now consider two hooks $\lambda$, $\mu$ both contained in a rectangle $(w^l)$ and let $\lambda^c$ and $\mu^c$ denote their complements in this rectangle. We call these \textit{hook complements}.  
For a fat staircase $\Delta_\alpha$ and $0 \leq k \leq 1$, we now inspect when the difference $s_{\mathcal{S}(\lambda^c, \alpha ;k)}-s_{\mathcal{S}(\mu^c, \alpha ;k)}$ is Schur-positive. 
Thus we are interested in the differences of skew Schur functions for pairs of diagrams such as the pair displayed below.

\

\

\

\setlength{\unitlength}{0.25mm}

\begin{picture}(00,140)(-10,-90)

\put(210,-70){\framebox(10,10)[tl]{ }}
\put(220,-70){\framebox(10,10)[tl]{ }}
\put(230,-70){\framebox(10,10)[tl]{ }}

\put(210,-60){\framebox(10,10)[tl]{ }}
\put(220,-60){\framebox(10,10)[tl]{ }}
\put(230,-60){\framebox(10,10)[tl]{ }}
\put(240,-60){\framebox(10,10)[tl]{ }}
\put(250,-60){\framebox(10,10)[tl]{ }}

\put(210,-50){\framebox(10,10)[tl]{ }}
\put(220,-50){\framebox(10,10)[tl]{ }}
\put(230,-50){\framebox(10,10)[tl]{ }}
\put(240,-50){\framebox(10,10)[tl]{ }}
\put(250,-50){\framebox(10,10)[tl]{ }}

\put(210,-40){\framebox(10,10)[tl]{ }}
\put(220,-40){\framebox(10,10)[tl]{ }}
\put(230,-40){\framebox(10,10)[tl]{ }}
\put(240,-40){\framebox(10,10)[tl]{ }}
\put(250,-40){\framebox(10,10)[tl]{ }}

\put(210,-30){\framebox(10,10)[tl]{ }}
\put(220,-30){\framebox(10,10)[tl]{ }}
\put(230,-30){\framebox(10,10)[tl]{ }}
\put(240,-30){\framebox(10,10)[tl]{ }}
\put(250,-30){\framebox(10,10)[tl]{ }}
\put(260,-30){\framebox(10,10)[tl]{ }}

\put(210,-20){\framebox(10,10)[tl]{ }}
\put(220,-20){\framebox(10,10)[tl]{ }}
\put(230,-20){\framebox(10,10)[tl]{ }}
\put(240,-20){\framebox(10,10)[tl]{ }}
\put(250,-20){\framebox(10,10)[tl]{ }}
\put(260,-20){\framebox(10,10)[tl]{ }}

\put(220,-10){\framebox(10,10)[tl]{ }}
\put(230,-10){\framebox(10,10)[tl]{ }}
\put(240,-10){\framebox(10,10)[tl]{ }}
\put(250,-10){\framebox(10,10)[tl]{ }}
\put(260,-10){\framebox(10,10)[tl]{ }}

\put(220,0){\framebox(10,10)[tl]{ }}
\put(230,0){\framebox(10,10)[tl]{ }}
\put(240,0){\framebox(10,10)[tl]{ }}
\put(250,0){\framebox(10,10)[tl]{ }}
\put(260,0){\framebox(10,10)[tl]{ }}

\put(230,10){\framebox(10,10)[tl]{ }}
\put(240,10){\framebox(10,10)[tl]{ }}
\put(250,10){\framebox(10,10)[tl]{ }}
\put(260,10){\framebox(10,10)[tl]{ }}

\put(240,20){\framebox(10,10)[tl]{ }}
\put(250,20){\framebox(10,10)[tl]{ }}
\put(260,20){\framebox(10,10)[tl]{ }}

\put(240,30){\framebox(10,10)[tl]{ }}
\put(250,30){\framebox(10,10)[tl]{ }}
\put(260,30){\framebox(10,10)[tl]{ }}

\put(240,40){\framebox(10,10)[tl]{ }}
\put(250,40){\framebox(10,10)[tl]{ }}
\put(260,40){\framebox(10,10)[tl]{ }}

\put(250,50){\framebox(10,10)[tl]{ }}
\put(260,50){\framebox(10,10)[tl]{ }}

\put(260,60){\framebox(10,10)[tl]{ }}

\put(100,-70){\framebox(10,10)[tl]{ }}
\put(110,-70){\framebox(10,10)[tl]{ }}
\put(120,-70){\framebox(10,10)[tl]{ }}
\put(130,-70){\framebox(10,10)[tl]{ }}

\put(100,-60){\framebox(10,10)[tl]{ }}
\put(110,-60){\framebox(10,10)[tl]{ }}
\put(120,-60){\framebox(10,10)[tl]{ }}
\put(130,-60){\framebox(10,10)[tl]{ }}
\put(140,-60){\framebox(10,10)[tl]{ }}

\put(100,-50){\framebox(10,10)[tl]{ }}
\put(110,-50){\framebox(10,10)[tl]{ }}
\put(120,-50){\framebox(10,10)[tl]{ }}
\put(130,-50){\framebox(10,10)[tl]{ }}
\put(140,-50){\framebox(10,10)[tl]{ }}

\put(100,-40){\framebox(10,10)[tl]{ }}
\put(110,-40){\framebox(10,10)[tl]{ }}
\put(120,-40){\framebox(10,10)[tl]{ }}
\put(130,-40){\framebox(10,10)[tl]{ }}
\put(140,-40){\framebox(10,10)[tl]{ }}

\put(100,-30){\framebox(10,10)[tl]{ }}
\put(110,-30){\framebox(10,10)[tl]{ }}
\put(120,-30){\framebox(10,10)[tl]{ }}
\put(130,-30){\framebox(10,10)[tl]{ }}
\put(140,-30){\framebox(10,10)[tl]{ }}

\put(100,-20){\framebox(10,10)[tl]{ }}
\put(110,-20){\framebox(10,10)[tl]{ }}
\put(120,-20){\framebox(10,10)[tl]{ }}
\put(130,-20){\framebox(10,10)[tl]{ }}
\put(140,-20){\framebox(10,10)[tl]{ }}
\put(150,-20){\framebox(10,10)[tl]{ }}

\put(110,-10){\framebox(10,10)[tl]{ }}
\put(120,-10){\framebox(10,10)[tl]{ }}
\put(130,-10){\framebox(10,10)[tl]{ }}
\put(140,-10){\framebox(10,10)[tl]{ }}
\put(150,-10){\framebox(10,10)[tl]{ }}

\put(110,0){\framebox(10,10)[tl]{ }}
\put(120,0){\framebox(10,10)[tl]{ }}
\put(130,0){\framebox(10,10)[tl]{ }}
\put(140,0){\framebox(10,10)[tl]{ }}
\put(150,0){\framebox(10,10)[tl]{ }}

\put(120,10){\framebox(10,10)[tl]{ }}
\put(130,10){\framebox(10,10)[tl]{ }}
\put(140,10){\framebox(10,10)[tl]{ }}
\put(150,10){\framebox(10,10)[tl]{ }}

\put(130,20){\framebox(10,10)[tl]{ }}
\put(140,20){\framebox(10,10)[tl]{ }}
\put(150,20){\framebox(10,10)[tl]{ }}

\put(130,30){\framebox(10,10)[tl]{ }}
\put(140,30){\framebox(10,10)[tl]{ }}
\put(150,30){\framebox(10,10)[tl]{ }}

\put(130,40){\framebox(10,10)[tl]{ }}
\put(140,40){\framebox(10,10)[tl]{ }}
\put(150,40){\framebox(10,10)[tl]{ }}

\put(140,50){\framebox(10,10)[tl]{ }}
\put(150,50){\framebox(10,10)[tl]{ }}

\put(150,60){\framebox(10,10)[tl]{ }}

\end{picture}
 
Our final result, Theorem~\ref{cuthookhasse} states that we obtain the same Hasse diagram for fat staircases with hook complement foundations as was obtained for fat staircases with hook foundations.
For this proof we utilise the following common notation. 
Namely, given two partitions $\lambda = (\lambda_1, \ldots, \lambda_n)$ and $\mu = (\mu_1, \ldots, \mu_m)$, we let $\lambda \cup \mu$ denote the partition that consists of the parts $\lambda_1, \ldots, \lambda_n, \mu_1, \ldots, \mu_m$ placed in weakly decreasing order. 
We shall also find it useful to treat partitions and weak compositions as vectors with non-negative integer entries that can be added componentwise.
We may add vectors of different lengths by adding zeroes to the end of the vectors.  
Further, given a positive integer $i$, we shall let $e_i$ denote the $i$-th standard basis vector.

\begin{theorem}
\label{cuthookhasse}
Let $\lambda$ and $\mu$ be hooks with $|\lambda|=|\mu|=h \leq n+k=w$ and let $0 \leq k \leq 1$.
Then $\mathcal{S}(\lambda^c,\alpha;k) \succeq_s \mathcal{S}(\mu^c,\alpha;k)$ if and only if $\mathcal{S}(\lambda,{\alpha^r};k) \succeq_s \mathcal{S}(\mu,{\alpha^r};k)$.
\end{theorem}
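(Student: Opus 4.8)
The plan is to reduce the hook-complement case to the hook case of Section~3.1 by means of the truncated-complement identity of Theorem~\ref{fatstairtocutstair}, applied once to each of the two hooks. Given a hook $\lambda \subseteq (w^l)$ with complement $\lambda^c$, I would choose the partition $\rho$ whose first $|\alpha|$ parts all equal $w$ and whose last $l$ parts are the parts of $\lambda^c$. Then $(w^{|\alpha|}) \subset \rho \subset (w^{|\alpha|+l})$, the foundation $(\rho_{|\alpha|+1},\dots,\rho_{|\alpha|+l})$ is exactly $\lambda^c$, and since the top $|\alpha|$ rows of $\rho$ are full, complementing in $(w^{|\alpha|+l})$ leaves only the bottom block, so $\rho^c = (\lambda^c)^c = \lambda$ (the complement of a hook complement back in $(w^l)$). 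Theorem~\ref{fatstairtocutstair} then yields
\[ s_{\mathcal{S}(\lambda^c,\alpha;k)} = c\big(s_{\mathcal{S}(\lambda,\alpha^r;k)}\big), \qquad s_{\mathcal{S}(\mu^c,\alpha;k)} = c\big(s_{\mathcal{S}(\mu,\alpha^r;k)}\big), \]
where $c$ is the truncated complement in the rectangle $(w^{|\alpha|+l})$. Because $c$ is linear, subtracting gives $s_{\mathcal{S}(\lambda^c,\alpha;k)} - s_{\mathcal{S}(\mu^c,\alpha;k)} = c(g)$ with $g = s_{\mathcal{S}(\lambda,\alpha^r;k)} - s_{\mathcal{S}(\mu,\alpha^r;k)}$.

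The heart of the argument is to show that $c$ is \emph{lossless} on $g$, i.e.\ that no Schur function is silently discarded by the truncation. Writing $g = \sum_\nu g_\nu s_\nu$, I would verify that every $\nu$ with $g_\nu \neq 0$ satisfies $\nu \subseteq (w^{|\alpha|+l})$. Such a $\nu$ must appear in the expansion of $s_{\mathcal{S}(\lambda,\alpha^r;k)}$ or of $s_{\mathcal{S}(\mu,\alpha^r;k)}$, so it suffices to bound the bounding box of these two skew diagrams. Each column of such a diagram strictly increases, so $\nu_1$ is at most the number of columns; and since the reading word is lattice, its top row consists of $1$'s and, inductively, the entries in row $i$ are at most $i$, so the maximum entry—hence $\ell(\nu)$—is at most the number of rows. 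Thus I only need width $\le w$ and at most $|\alpha|+l$ rows. For the width, $\Delta_{\alpha^r}$ has width $l(\alpha^r)$ and the foundation is shifted $k$ boxes to its left, giving width $\max(\lambda_a,\,l(\alpha^r)+k)$; since $l(\alpha^r)+k = w$ when $k=1$ and $l(\alpha^r)+k = n-1 < w$ when $k=0$, and $\lambda_a \le h \le w$, the width is $\le w$. For the number of rows, it equals $|\alpha^r|+\lambda_l$, and using $|\alpha| = |\alpha^r| + (1-k)\alpha_n$ together with $\lambda_l \le l$ (from $\lambda \subseteq (w^l)$) gives $|\alpha^r|+\lambda_l \le |\alpha|+l$; the identical bounds hold for $\mu$.

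Finally I would conclude. On symmetric functions supported on partitions contained in $(w^{|\alpha|+l})$, the operator $c$ is exactly the linear bijection $s_\nu \mapsto s_{\nu^c}$, because complementation is an involution of the set of partitions inside that rectangle. By the previous paragraph $g$ is supported on such partitions, so $c(g) = \sum_\nu g_\nu s_{\nu^c}$ with the \emph{same} coefficients $g_\nu$, now indexed by the distinct partitions $\nu^c$. Hence $c(g) \ge_s 0$ if and only if every $g_\nu \ge 0$, i.e.\ if and only if $g \ge_s 0$. Combining with the first paragraph, $s_{\mathcal{S}(\lambda^c,\alpha;k)} - s_{\mathcal{S}(\mu^c,\alpha;k)} \ge_s 0$ precisely when $s_{\mathcal{S}(\lambda,\alpha^r;k)} - s_{\mathcal{S}(\mu,\alpha^r;k)} \ge_s 0$, which is the asserted equivalence $\mathcal{S}(\lambda^c,\alpha;k) \succeq_s \mathcal{S}(\mu^c,\alpha;k) \iff \mathcal{S}(\lambda,\alpha^r;k) \succeq_s \mathcal{S}(\mu,\alpha^r;k)$.

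The main obstacle is the lossless claim of the middle paragraph: one must rule out any Schur component lying outside $(w^{|\alpha|+l})$, since such a term would be deleted by the truncation and could break the equivalence in the reverse direction. This is exactly the step where the hypotheses $h \le w$ and $\lambda,\mu \subseteq (w^l)$ are used, and it is the only place the argument needs anything beyond the formal identity of Theorem~\ref{fatstairtocutstair} and the linearity of $c$.
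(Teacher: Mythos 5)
Your proof is correct, and for the harder (converse) direction it takes a genuinely different route from the paper. Both arguments open identically: apply Theorem~\ref{fatstairtocutstair} with $\rho = (w^{|\alpha|})\cup\lambda^c$ (resp.\ $\mu^c$) and use linearity of $c$ to write $s_{\mathcal{S}(\lambda^c,\alpha;k)}-s_{\mathcal{S}(\mu^c,\alpha;k)} = c\bigl(s_{\mathcal{S}(\lambda,\alpha^r;k)}-s_{\mathcal{S}(\mu,\alpha^r;k)}\bigr)$, from which the forward implication is immediate. For the converse, the paper does \emph{not} prove that the truncation is lossless; instead it invokes the classification of Section~4 to say that failure of Schur-positivity must arise via Theorem~\ref{kantichainhooks1}, Theorem~\ref{kantichainhooks11}, or Theorem~\ref{kcrosshooks11}, and then checks only that the single negative witness $s_\nu$ constructed in each of those proofs has $\nu\subseteq(w^{|\alpha|+l})$, so that this one negative term survives truncation. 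You instead prove the stronger statement that \emph{every} partition $\nu$ occurring in either expansion lies in $(w^{|\alpha|+l})$ --- bounding $\nu_1$ by the number of columns ($\leq l(\alpha^r)+k\leq w$, since each column holds at most one $1$ and the lattice content is a partition) and $\ell(\nu)$ by the number of rows ($\leq|\alpha^r|+\lambda_l\leq|\alpha|+l$, via the standard fact that entries in row $i$ of a lattice SSYT are at most $i$) --- so that $c$ restricts to the involution $s_\nu\mapsto s_{\nu^c}$ on the full support of the difference and therefore preserves and reflects Schur-positivity. Your version buys independence from the Section~4 classification and from the internals of those three proofs (and would apply verbatim to non-hook foundations contained in $(w^l)$); the paper's version is shorter given that the explicit witnesses are already on the table. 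One small slip that does not affect the conclusion: the width of $\mathcal{S}(\lambda,\alpha^r;k)$ is exactly $l(\alpha^r)+k$ (the condition $\lambda_a-k\leq l(\alpha^r)$ forces $\lambda_a\leq l(\alpha^r)+k$), so the $\max(\lambda_a,\,l(\alpha^r)+k)$ in your width bound is superfluous.
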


\begin{proof}
We wish to apply Theorem~\ref{fatstairtocutstair} to both diagrams.
To this end, we let $\rho(\lambda^c) = (w^{|\alpha|}) \cup \lambda^c$ % = (w^{|\alpha|}, \mu_1,\ldots)$ 
and $\rho(\mu^c) = (w^{|\alpha|}) \cup \mu^c$ % = (w^{|\alpha|}, \tau_1, \ldots)$. 
Then we have $(w^{|\alpha|}) \subset \rho(\lambda^c), \rho(\mu^c) \subset (w^{|\alpha|+l})$ so we may apply Theorem~\ref{fatstairtocutstair} to both $\rho(\lambda^c) / \delta_{\alpha^r} = \mathcal{S}(\lambda^c,\alpha;k)$ and $\rho(\mu^c) / \delta_{\alpha^r} = \mathcal{S}(\mu^c,\alpha;k)$. 
This gives
\begin{eqnarray*}
 s_{\mathcal{S}(\lambda^c,\alpha;k)} - s_{\mathcal{S}(\mu^c,\alpha;k)} 
&=&   c(s_{\mathcal{S}(\lambda,{\alpha^r};k)})-c(s_{\mathcal{S}(\mu,{\alpha^r};k)}) \\
&=&   c(s_{\mathcal{S}(\lambda,{\alpha^r};k)}-s_{\mathcal{S}(\mu,{\alpha^r};k)}),   \\
\end{eqnarray*}
where these complements are performed in the rectangle $(w^{|\alpha|+l})$. 

Now, if $\mathcal{S}(\lambda,{\alpha^r};k) \succeq_s \mathcal{S}(\mu,{\alpha^r};k)$, then the above equation shows that $\mathcal{S}(\lambda^c,\alpha;k) \succeq_s \mathcal{S}(\mu^c,\alpha;k)$ as well. 

For the converse direction, suppose that $\mathcal{S}(\lambda,{\alpha^r};k) \not\succeq_s \mathcal{S}(\mu,{\alpha^r};k)$.
Thus, by assumption, the difference $s_{\mathcal{S}(\lambda,{\alpha^r};k)}-s_{\mathcal{S}(\mu,{\alpha^r};k)}$ is not Schur-positive. 
However, we need to verify that the truncated version $c(s_{\mathcal{S}(\lambda,{\alpha^r};k)}-s_{\mathcal{S}(\mu,{\alpha^r};k)})$ is also not Schur-positive.

In Section 4, we saw that the only cases where the difference was not Schur-positive among these staircases with hook foundations were those cases covered by Theorem~\ref{kantichainhooks1}, Theorem~\ref{kantichainhooks11}, and Theorem~\ref{kcrosshooks11}. 
Thus $\lambda$ and $\mu$ must satisfy the hypotheses of one of these three theorems.
In each of these three theorems, by inspecting a particular term $s_\nu$ in the difference, it was proved that the difference was not Schur-positive. 
We need only check that for each theorem the partition $\nu$ constructed satisfies $\nu \subseteq (w^{|\alpha|+l})$. 
In this way, we prove that the term $s_\nu$ also appears in the truncated difference and hence shows that the this truncated difference is not Schur-positive.

\

In both Theorem~\ref{kantichainhooks11} and Theorem~\ref{kcrosshooks11} we used the content $\nu = \delta_{\alpha^r} + \sum_{i=1}^{h} e_{r_{i}}$, where $0 \leq k \leq 1$ implies that $r_1 < r_2 < \dots$ are the values of $R_{\alpha^r,k}$. 
We have $w(\delta_{\alpha^r}) \leq w(\delta_{\alpha}) = n$. 
Further, since the $r_i$ are distinct, adding the terms $\sum_{i=1}^{h} e_{r_{i}}$ to $\delta_{\alpha^r}$ can only increase the width by 1, and this only happens when $r_1=1$ which implies that $k=1$.
Thus, in either case, $w(\nu) \leq n +k =w$. 
Also, since $l(\delta_{\alpha^r}) \leq l(\delta_{\alpha}) = |\alpha|$ and each $r_i \leq |\alpha|+1$, we have $l(\nu) \leq |\alpha|+1 \leq |\alpha|+l$. 
Therefore $\nu$ is contained in the rectangle $(w^{|\alpha|+l})$.

In Theorem~\ref{kantichainhooks1} we used $\nu = \delta_{\alpha^r} + \sum_{i=1}^{\lambda_a-1} e_{r_{i}} + (0^{|\alpha^r|},1^{\lambda_l})$, where $r_1 < r_2 < \dots$ are values of $R_{\alpha^r,k}$. 
As in the previous case we find that $w(\nu) \leq n +k =w$. 
For the length of $\nu$ we have $l(\nu) = |\alpha^r| + \lambda_l \leq |\alpha| + l$, since $|\alpha^r| \leq |\alpha|$ and $\lambda \subseteq (w^l)$. 
Therefore $\nu$ is contained in the rectangle $(w^{|\alpha|+l})$.

\

Thus, in each case $\nu$ is contained in the rectangle $(w^{|\alpha|+l})$. Therefore the term $s_\nu$ in the difference $s_{\mathcal{S}(\lambda,{\alpha^r};k)}-s_{\mathcal{S}(\mu,{\alpha^r};k)}$, is also in the difference $c(s_{\mathcal{S}(\lambda,{\alpha^r};k)}-s_{\mathcal{S}(\mu,{\alpha^r};k)})$. 
Since this term has a negative coefficient, it shows that $c(s_{\mathcal{S}(\lambda,{\alpha^r};k)}-s_{\mathcal{S}(\mu,{\alpha^r};k)})$, and hence $s_{\mathcal{S}(\lambda^c,\alpha;k)} - s_{\mathcal{S}(\mu^c,\alpha;k)}$ is not Schur-positive.
That is, $\mathcal{S}(\lambda^c,\alpha;k) \not\succeq_s \mathcal{S}(\mu^c,\alpha;k)$.
This completes the converse direction.

Thus we have shown that $\mathcal{S}(\lambda^c,\alpha;k) \succeq_s \mathcal{S}(\mu^c,\alpha;k)$ if and only if $\mathcal{S}(\lambda,{\alpha^r};k) \succeq_s \mathcal{S}(\mu,{\alpha^r};k)$. \qed

\end{proof}

\begin{example} Here we see the Hasse diagram obtained by considering all diagrams of the form $\mathcal{S}(\lambda^c,\alpha;k)$ where $\alpha = (1,1,3,1,2)$, $k=1$, and $\lambda$ is a hook of size 6, where $\lambda^c$ is computed in the rectangle $(6^6)$.

\

\

\setlength{\unitlength}{0.2mm}

\begin{picture}(00,140)(-60,-90)

\put(42,-80){\line(1,-1){263}}
\put(130,-80){\line(1,-1){165}}
\put(130,-80){\line(2,-3){176}}
\put(240,-80){\line(1,-1){60}}
\put(240,-80){\line(1,-3){55}}
\put(240,-80){\line(1,-4){66}}

\put(345,-345){\line(0,1){30}}
\put(345,-190){\line(0,1){30}}

\put(320,-460){\framebox(10,10)[tl]{ }}
\put(330,-460){\framebox(10,10)[tl]{ }}
\put(340,-460){\framebox(10,10)[tl]{ }}
\put(350,-460){\framebox(10,10)[tl]{ }}
\put(360,-460){\framebox(10,10)[tl]{ }}
\put(370,-460){\framebox(10,10)[tl]{ }}

\put(320,-450){\framebox(10,10)[tl]{ }}
\put(330,-450){\framebox(10,10)[tl]{ }}
\put(340,-450){\framebox(10,10)[tl]{ }}
\put(350,-450){\framebox(10,10)[tl]{ }}
\put(360,-450){\framebox(10,10)[tl]{ }}
\put(370,-450){\framebox(10,10)[tl]{ }}

\put(320,-440){\framebox(10,10)[tl]{ }}
\put(330,-440){\framebox(10,10)[tl]{ }}
\put(340,-440){\framebox(10,10)[tl]{ }}
\put(350,-440){\framebox(10,10)[tl]{ }}
\put(360,-440){\framebox(10,10)[tl]{ }}
\put(370,-440){\framebox(10,10)[tl]{ }}

\put(320,-430){\framebox(10,10)[tl]{ }}
\put(330,-430){\framebox(10,10)[tl]{ }}
\put(340,-430){\framebox(10,10)[tl]{ }}
\put(350,-430){\framebox(10,10)[tl]{ }}
\put(360,-430){\framebox(10,10)[tl]{ }}
\put(370,-430){\framebox(10,10)[tl]{ }}

\put(320,-420){\framebox(10,10)[tl]{ }}
\put(330,-420){\framebox(10,10)[tl]{ }}
\put(340,-420){\framebox(10,10)[tl]{ }}
\put(350,-420){\framebox(10,10)[tl]{ }}
\put(360,-420){\framebox(10,10)[tl]{ }}
\put(370,-420){\framebox(10,10)[tl]{ }}

\put(330,-410){\framebox(10,10)[tl]{ }}
\put(340,-410){\framebox(10,10)[tl]{ }}
\put(350,-410){\framebox(10,10)[tl]{ }}
\put(360,-410){\framebox(10,10)[tl]{ }}
\put(370,-410){\framebox(10,10)[tl]{ }}

\put(330,-400){\framebox(10,10)[tl]{ }}
\put(340,-400){\framebox(10,10)[tl]{ }}
\put(350,-400){\framebox(10,10)[tl]{ }}
\put(360,-400){\framebox(10,10)[tl]{ }}
\put(370,-400){\framebox(10,10)[tl]{ }}

\put(340,-390){\framebox(10,10)[tl]{ }}
\put(350,-390){\framebox(10,10)[tl]{ }}
\put(360,-390){\framebox(10,10)[tl]{ }}
\put(370,-390){\framebox(10,10)[tl]{ }}

\put(350,-380){\framebox(10,10)[tl]{ }}
\put(360,-380){\framebox(10,10)[tl]{ }}
\put(370,-380){\framebox(10,10)[tl]{ }}

\put(350,-370){\framebox(10,10)[tl]{ }}
\put(360,-370){\framebox(10,10)[tl]{ }}
\put(370,-370){\framebox(10,10)[tl]{ }}

\put(350,-360){\framebox(10,10)[tl]{ }}
\put(360,-360){\framebox(10,10)[tl]{ }}
\put(370,-360){\framebox(10,10)[tl]{ }}

\put(360,-350){\framebox(10,10)[tl]{ }}
\put(370,-350){\framebox(10,10)[tl]{ }}

\put(370,-340){\framebox(10,10)[tl]{ }}

\put(320,-310){\framebox(10,10)[tl]{ }}

\put(320,-300){\framebox(10,10)[tl]{ }}
\put(330,-300){\framebox(10,10)[tl]{ }}
\put(340,-300){\framebox(10,10)[tl]{ }}
\put(350,-300){\framebox(10,10)[tl]{ }}
\put(360,-300){\framebox(10,10)[tl]{ }}

\put(320,-290){\framebox(10,10)[tl]{ }}
\put(330,-290){\framebox(10,10)[tl]{ }}
\put(340,-290){\framebox(10,10)[tl]{ }}
\put(350,-290){\framebox(10,10)[tl]{ }}
\put(360,-290){\framebox(10,10)[tl]{ }}
\put(370,-290){\framebox(10,10)[tl]{ }}

\put(320,-280){\framebox(10,10)[tl]{ }}
\put(330,-280){\framebox(10,10)[tl]{ }}
\put(340,-280){\framebox(10,10)[tl]{ }}
\put(350,-280){\framebox(10,10)[tl]{ }}
\put(360,-280){\framebox(10,10)[tl]{ }}
\put(370,-280){\framebox(10,10)[tl]{ }}

\put(320,-270){\framebox(10,10)[tl]{ }}
\put(330,-270){\framebox(10,10)[tl]{ }}
\put(340,-270){\framebox(10,10)[tl]{ }}
\put(350,-270){\framebox(10,10)[tl]{ }}
\put(360,-270){\framebox(10,10)[tl]{ }}
\put(370,-270){\framebox(10,10)[tl]{ }}

\put(320,-260){\framebox(10,10)[tl]{ }}
\put(330,-260){\framebox(10,10)[tl]{ }}
\put(340,-260){\framebox(10,10)[tl]{ }}
\put(350,-260){\framebox(10,10)[tl]{ }}
\put(360,-260){\framebox(10,10)[tl]{ }}
\put(370,-260){\framebox(10,10)[tl]{ }}

\put(330,-250){\framebox(10,10)[tl]{ }}
\put(340,-250){\framebox(10,10)[tl]{ }}
\put(350,-250){\framebox(10,10)[tl]{ }}
\put(360,-250){\framebox(10,10)[tl]{ }}
\put(370,-250){\framebox(10,10)[tl]{ }}

\put(330,-240){\framebox(10,10)[tl]{ }}
\put(340,-240){\framebox(10,10)[tl]{ }}
\put(350,-240){\framebox(10,10)[tl]{ }}
\put(360,-240){\framebox(10,10)[tl]{ }}
\put(370,-240){\framebox(10,10)[tl]{ }}

\put(340,-230){\framebox(10,10)[tl]{ }}
\put(350,-230){\framebox(10,10)[tl]{ }}
\put(360,-230){\framebox(10,10)[tl]{ }}
\put(370,-230){\framebox(10,10)[tl]{ }}

\put(350,-220){\framebox(10,10)[tl]{ }}
\put(360,-220){\framebox(10,10)[tl]{ }}
\put(370,-220){\framebox(10,10)[tl]{ }}

\put(350,-210){\framebox(10,10)[tl]{ }}
\put(360,-210){\framebox(10,10)[tl]{ }}
\put(370,-210){\framebox(10,10)[tl]{ }}

\put(350,-200){\framebox(10,10)[tl]{ }}
\put(360,-200){\framebox(10,10)[tl]{ }}
\put(370,-200){\framebox(10,10)[tl]{ }}

\put(360,-190){\framebox(10,10)[tl]{ }}
\put(370,-190){\framebox(10,10)[tl]{ }}

\put(370,-180){\framebox(10,10)[tl]{ }}

\put(320,-150){\framebox(10,10)[tl]{ }}
\put(330,-150){\framebox(10,10)[tl]{ }}

\put(320,-140){\framebox(10,10)[tl]{ }}
\put(330,-140){\framebox(10,10)[tl]{ }}
\put(340,-140){\framebox(10,10)[tl]{ }}
\put(350,-140){\framebox(10,10)[tl]{ }}
\put(360,-140){\framebox(10,10)[tl]{ }}

\put(320,-130){\framebox(10,10)[tl]{ }}
\put(330,-130){\framebox(10,10)[tl]{ }}
\put(340,-130){\framebox(10,10)[tl]{ }}
\put(350,-130){\framebox(10,10)[tl]{ }}
\put(360,-130){\framebox(10,10)[tl]{ }}

\put(320,-120){\framebox(10,10)[tl]{ }}
\put(330,-120){\framebox(10,10)[tl]{ }}
\put(340,-120){\framebox(10,10)[tl]{ }}
\put(350,-120){\framebox(10,10)[tl]{ }}
\put(360,-120){\framebox(10,10)[tl]{ }}
\put(370,-120){\framebox(10,10)[tl]{ }}

\put(320,-110){\framebox(10,10)[tl]{ }}
\put(330,-110){\framebox(10,10)[tl]{ }}
\put(340,-110){\framebox(10,10)[tl]{ }}
\put(350,-110){\framebox(10,10)[tl]{ }}
\put(360,-110){\framebox(10,10)[tl]{ }}
\put(370,-110){\framebox(10,10)[tl]{ }}

\put(320,-100){\framebox(10,10)[tl]{ }}
\put(330,-100){\framebox(10,10)[tl]{ }}
\put(340,-100){\framebox(10,10)[tl]{ }}
\put(350,-100){\framebox(10,10)[tl]{ }}
\put(360,-100){\framebox(10,10)[tl]{ }}
\put(370,-100){\framebox(10,10)[tl]{ }}

\put(330,-90){\framebox(10,10)[tl]{ }}
\put(340,-90){\framebox(10,10)[tl]{ }}
\put(350,-90){\framebox(10,10)[tl]{ }}
\put(360,-90){\framebox(10,10)[tl]{ }}
\put(370,-90){\framebox(10,10)[tl]{ }}

\put(330,-80){\framebox(10,10)[tl]{ }}
\put(340,-80){\framebox(10,10)[tl]{ }}
\put(350,-80){\framebox(10,10)[tl]{ }}
\put(360,-80){\framebox(10,10)[tl]{ }}
\put(370,-80){\framebox(10,10)[tl]{ }}

\put(340,-70){\framebox(10,10)[tl]{ }}
\put(350,-70){\framebox(10,10)[tl]{ }}
\put(360,-70){\framebox(10,10)[tl]{ }}
\put(370,-70){\framebox(10,10)[tl]{ }}

\put(350,-60){\framebox(10,10)[tl]{ }}
\put(360,-60){\framebox(10,10)[tl]{ }}
\put(370,-60){\framebox(10,10)[tl]{ }}

\put(350,-50){\framebox(10,10)[tl]{ }}
\put(360,-50){\framebox(10,10)[tl]{ }}
\put(370,-50){\framebox(10,10)[tl]{ }}

\put(350,-40){\framebox(10,10)[tl]{ }}
\put(360,-40){\framebox(10,10)[tl]{ }}
\put(370,-40){\framebox(10,10)[tl]{ }}

\put(360,-30){\framebox(10,10)[tl]{ }}
\put(370,-30){\framebox(10,10)[tl]{ }}

\put(370,-20){\framebox(10,10)[tl]{ }}

\put(210,-70){\framebox(10,10)[tl]{ }}
\put(220,-70){\framebox(10,10)[tl]{ }}
\put(230,-70){\framebox(10,10)[tl]{ }}

\put(210,-60){\framebox(10,10)[tl]{ }}
\put(220,-60){\framebox(10,10)[tl]{ }}
\put(230,-60){\framebox(10,10)[tl]{ }}
\put(240,-60){\framebox(10,10)[tl]{ }}
\put(250,-60){\framebox(10,10)[tl]{ }}

\put(210,-50){\framebox(10,10)[tl]{ }}
\put(220,-50){\framebox(10,10)[tl]{ }}
\put(230,-50){\framebox(10,10)[tl]{ }}
\put(240,-50){\framebox(10,10)[tl]{ }}
\put(250,-50){\framebox(10,10)[tl]{ }}

\put(210,-40){\framebox(10,10)[tl]{ }}
\put(220,-40){\framebox(10,10)[tl]{ }}
\put(230,-40){\framebox(10,10)[tl]{ }}
\put(240,-40){\framebox(10,10)[tl]{ }}
\put(250,-40){\framebox(10,10)[tl]{ }}

\put(210,-30){\framebox(10,10)[tl]{ }}
\put(220,-30){\framebox(10,10)[tl]{ }}
\put(230,-30){\framebox(10,10)[tl]{ }}
\put(240,-30){\framebox(10,10)[tl]{ }}
\put(250,-30){\framebox(10,10)[tl]{ }}
\put(260,-30){\framebox(10,10)[tl]{ }}

\put(210,-20){\framebox(10,10)[tl]{ }}
\put(220,-20){\framebox(10,10)[tl]{ }}
\put(230,-20){\framebox(10,10)[tl]{ }}
\put(240,-20){\framebox(10,10)[tl]{ }}
\put(250,-20){\framebox(10,10)[tl]{ }}
\put(260,-20){\framebox(10,10)[tl]{ }}

\put(220,-10){\framebox(10,10)[tl]{ }}
\put(230,-10){\framebox(10,10)[tl]{ }}
\put(240,-10){\framebox(10,10)[tl]{ }}
\put(250,-10){\framebox(10,10)[tl]{ }}
\put(260,-10){\framebox(10,10)[tl]{ }}

\put(220,0){\framebox(10,10)[tl]{ }}
\put(230,0){\framebox(10,10)[tl]{ }}
\put(240,0){\framebox(10,10)[tl]{ }}
\put(250,0){\framebox(10,10)[tl]{ }}
\put(260,0){\framebox(10,10)[tl]{ }}

\put(230,10){\framebox(10,10)[tl]{ }}
\put(240,10){\framebox(10,10)[tl]{ }}
\put(250,10){\framebox(10,10)[tl]{ }}
\put(260,10){\framebox(10,10)[tl]{ }}

\put(240,20){\framebox(10,10)[tl]{ }}
\put(250,20){\framebox(10,10)[tl]{ }}
\put(260,20){\framebox(10,10)[tl]{ }}

\put(240,30){\framebox(10,10)[tl]{ }}
\put(250,30){\framebox(10,10)[tl]{ }}
\put(260,30){\framebox(10,10)[tl]{ }}

\put(240,40){\framebox(10,10)[tl]{ }}
\put(250,40){\framebox(10,10)[tl]{ }}
\put(260,40){\framebox(10,10)[tl]{ }}

\put(250,50){\framebox(10,10)[tl]{ }}
\put(260,50){\framebox(10,10)[tl]{ }}

\put(260,60){\framebox(10,10)[tl]{ }}

\put(100,-70){\framebox(10,10)[tl]{ }}
\put(110,-70){\framebox(10,10)[tl]{ }}
\put(120,-70){\framebox(10,10)[tl]{ }}
\put(130,-70){\framebox(10,10)[tl]{ }}

\put(100,-60){\framebox(10,10)[tl]{ }}
\put(110,-60){\framebox(10,10)[tl]{ }}
\put(120,-60){\framebox(10,10)[tl]{ }}
\put(130,-60){\framebox(10,10)[tl]{ }}
\put(140,-60){\framebox(10,10)[tl]{ }}

\put(100,-50){\framebox(10,10)[tl]{ }}
\put(110,-50){\framebox(10,10)[tl]{ }}
\put(120,-50){\framebox(10,10)[tl]{ }}
\put(130,-50){\framebox(10,10)[tl]{ }}
\put(140,-50){\framebox(10,10)[tl]{ }}

\put(100,-40){\framebox(10,10)[tl]{ }}
\put(110,-40){\framebox(10,10)[tl]{ }}
\put(120,-40){\framebox(10,10)[tl]{ }}
\put(130,-40){\framebox(10,10)[tl]{ }}
\put(140,-40){\framebox(10,10)[tl]{ }}

\put(100,-30){\framebox(10,10)[tl]{ }}
\put(110,-30){\framebox(10,10)[tl]{ }}
\put(120,-30){\framebox(10,10)[tl]{ }}
\put(130,-30){\framebox(10,10)[tl]{ }}
\put(140,-30){\framebox(10,10)[tl]{ }}

\put(100,-20){\framebox(10,10)[tl]{ }}
\put(110,-20){\framebox(10,10)[tl]{ }}
\put(120,-20){\framebox(10,10)[tl]{ }}
\put(130,-20){\framebox(10,10)[tl]{ }}
\put(140,-20){\framebox(10,10)[tl]{ }}
\put(150,-20){\framebox(10,10)[tl]{ }}

\put(110,-10){\framebox(10,10)[tl]{ }}
\put(120,-10){\framebox(10,10)[tl]{ }}
\put(130,-10){\framebox(10,10)[tl]{ }}
\put(140,-10){\framebox(10,10)[tl]{ }}
\put(150,-10){\framebox(10,10)[tl]{ }}

\put(110,0){\framebox(10,10)[tl]{ }}
\put(120,0){\framebox(10,10)[tl]{ }}
\put(130,0){\framebox(10,10)[tl]{ }}
\put(140,0){\framebox(10,10)[tl]{ }}
\put(150,0){\framebox(10,10)[tl]{ }}

\put(120,10){\framebox(10,10)[tl]{ }}
\put(130,10){\framebox(10,10)[tl]{ }}
\put(140,10){\framebox(10,10)[tl]{ }}
\put(150,10){\framebox(10,10)[tl]{ }}

\put(130,20){\framebox(10,10)[tl]{ }}
\put(140,20){\framebox(10,10)[tl]{ }}
\put(150,20){\framebox(10,10)[tl]{ }}

\put(130,30){\framebox(10,10)[tl]{ }}
\put(140,30){\framebox(10,10)[tl]{ }}
\put(150,30){\framebox(10,10)[tl]{ }}

\put(130,40){\framebox(10,10)[tl]{ }}
\put(140,40){\framebox(10,10)[tl]{ }}
\put(150,40){\framebox(10,10)[tl]{ }}

\put(140,50){\framebox(10,10)[tl]{ }}
\put(150,50){\framebox(10,10)[tl]{ }}

\put(150,60){\framebox(10,10)[tl]{ }}

\put(-10,-70){\framebox(10,10)[tl]{ }}
\put(0,-70){\framebox(10,10)[tl]{ }}
\put(10,-70){\framebox(10,10)[tl]{ }}
\put(20,-70){\framebox(10,10)[tl]{ }}
\put(30,-70){\framebox(10,10)[tl]{ }}

\put(-10,-60){\framebox(10,10)[tl]{ }}
\put(0,-60){\framebox(10,10)[tl]{ }}
\put(10,-60){\framebox(10,10)[tl]{ }}
\put(20,-60){\framebox(10,10)[tl]{ }}
\put(30,-60){\framebox(10,10)[tl]{ }}

\put(-10,-50){\framebox(10,10)[tl]{ }}
\put(0,-50){\framebox(10,10)[tl]{ }}
\put(10,-50){\framebox(10,10)[tl]{ }}
\put(20,-50){\framebox(10,10)[tl]{ }}
\put(30,-50){\framebox(10,10)[tl]{ }}

\put(-10,-40){\framebox(10,10)[tl]{ }}
\put(0,-40){\framebox(10,10)[tl]{ }}
\put(10,-40){\framebox(10,10)[tl]{ }}
\put(20,-40){\framebox(10,10)[tl]{ }}
\put(30,-40){\framebox(10,10)[tl]{ }}

\put(-10,-30){\framebox(10,10)[tl]{ }}
\put(0,-30){\framebox(10,10)[tl]{ }}
\put(10,-30){\framebox(10,10)[tl]{ }}
\put(20,-30){\framebox(10,10)[tl]{ }}
\put(30,-30){\framebox(10,10)[tl]{ }}

\put(-10,-20){\framebox(10,10)[tl]{ }}
\put(0,-20){\framebox(10,10)[tl]{ }}
\put(10,-20){\framebox(10,10)[tl]{ }}
\put(20,-20){\framebox(10,10)[tl]{ }}
\put(30,-20){\framebox(10,10)[tl]{ }}

\put(0,-10){\framebox(10,10)[tl]{ }}
\put(10,-10){\framebox(10,10)[tl]{ }}
\put(20,-10){\framebox(10,10)[tl]{ }}
\put(30,-10){\framebox(10,10)[tl]{ }}
\put(40,-10){\framebox(10,10)[tl]{ }}

\put(0,0){\framebox(10,10)[tl]{ }}
\put(10,0){\framebox(10,10)[tl]{ }}
\put(20,0){\framebox(10,10)[tl]{ }}
\put(30,0){\framebox(10,10)[tl]{ }}
\put(40,0){\framebox(10,10)[tl]{ }}

\put(10,10){\framebox(10,10)[tl]{ }}
\put(20,10){\framebox(10,10)[tl]{ }}
\put(30,10){\framebox(10,10)[tl]{ }}
\put(40,10){\framebox(10,10)[tl]{ }}

\put(20,20){\framebox(10,10)[tl]{ }}
\put(30,20){\framebox(10,10)[tl]{ }}
\put(40,20){\framebox(10,10)[tl]{ }}

\put(20,30){\framebox(10,10)[tl]{ }}
\put(30,30){\framebox(10,10)[tl]{ }}
\put(40,30){\framebox(10,10)[tl]{ }}

\put(20,40){\framebox(10,10)[tl]{ }}
\put(30,40){\framebox(10,10)[tl]{ }}
\put(40,40){\framebox(10,10)[tl]{ }}

\put(30,50){\framebox(10,10)[tl]{ }}
\put(40,50){\framebox(10,10)[tl]{ }}

\put(40,60){\framebox(10,10)[tl]{ }}

\end{picture}

\end{example}

\

\

\

\

\

\

\

\

\

\

\

\

\

\

\

\

\end{document}